 \theoremstyle{plain}
 \newtheorem{theorem}{Theorem}[section]
 \newtheorem{lemma}[theorem]{Lemma}
 \newtheorem{proposition}[theorem]{Proposition}
 \newtheorem{corollary}[theorem]{Corollary}
 \newtheorem{problem}[theorem]{Problem}
 \newtheorem{Bounded Diameter Lemma}[theorem]{Bounded Diameter Lemma}
 \theoremstyle{definition}
 \newtheorem{definition}[theorem]{Definition}
 \newtheorem{remark}[theorem]{Remark}
 \newcommand{\bi}{\bar{i}}
 \newcommand{\bj}{\bar{j}}
 \newcommand{\Hmm}[1]{\leavevmode{\marginpar{\tiny%
 			$\hbox to 0mm{\hspace*{-0.5mm}$\leftarrow$\hss}%
 			\vcenter{\vrule depth 0.1mm height 0.1mm width \the\marginparwidth}%
 			\hbox to
 			0mm{\hss$\rightarrow$\hspace*{-0.5mm}}$\\\relax\raggedright #1}}}
 \DeclareFixedFont{\Acknowledgment}{OT1}{cmr}{bx}{n}{14pt}
\begin{document}
\numberwithin{equation}{section}

\title{On the level set version of partial uniform ellipticity and  applications}
\author{Ri-Rong Yuan\thanks{School of Mathematics, \,South China University of Technology, \,Guangzhou 510641, \,China \\ \indent \indent Email address:\,yuanrr@scut.edu.cn
}}
\date{}
\maketitle

\begin{abstract}

	We derive level set version of partial uniform ellipticity for symmetric concave functions. This suggests an   
	effective 
approach to investigate second order fully nonlinear  equations of elliptic and parabolic type.


\end{abstract}



 \section{Introduction}

 
 Let $f$ be a smooth symmetric function defined in an open symmetric convex cone  $\Gamma\subset\mathbb{R}^n$ containing the positive cone
 \[\Gamma_n=\left\{\lambda\in \mathbb{R}^n: \mbox{ each } \lambda_i>0\right\}\subseteq\Gamma\]
 with vertex at the origin  and with nonempty boundary
  $\partial \Gamma\neq \emptyset$. 
The study of fully nonlinear equations of the form
 \begin{equation}  \label{equation-cns} \begin{aligned}
 		\,& F(D^2u):= f(\lambda(D^2 u))=\psi \mbox{ in } \Omega\subset\mathbb{R}^n 
 \end{aligned} \end{equation}
starts form the pioneering work \cite{CNS3} of Caffarelli-Nirenberg-Spruck.
 Since then the equations of this type have been extensively studied in real and complex variables.
 The following two basic hypotheses are imposed in the literature:
  \begin{equation}\label{concave}\begin{aligned}
 	\mbox{$f$ is a concave function in $\Gamma$,}
 \end{aligned}\end{equation}
  \begin{equation}
 	\label{elliptic}
 	\begin{aligned}
 		\,& f_i(\lambda):=\frac{\partial f}{\partial \lambda_{i}}(\lambda)> 0  \mbox{ in } \Gamma,\,&  \forall 1\leq i\leq n.
 	\end{aligned}
 \end{equation}
In some cases one may replace \eqref{elliptic} by a weaker condition
\begin{equation}
	\label{elliptic-weak}
	\begin{aligned}
		f_{i}(\lambda) \geq 0 \mbox{ in } \Gamma, \quad \forall 1\leq i\leq n.
	\end{aligned}
\end{equation}

As is well known, the typical examples satisfying \eqref{concave}-\eqref{elliptic}  are as follows: 
 \[f(\lambda)= \sigma_k^{1/k}(\lambda) \mbox{ or } (\sigma_k/\sigma_l)^{1/(k-l)}(\lambda), \mbox{ } 1\leq l<k\leq n, \mbox{ } \Gamma=\Gamma_k\]
 where $\sigma_k$
 is the $k$-th elementary symmetric function. Here
  $$\Gamma_k=\left\{\lambda\in\mathbb{R}^n: \sigma_j(\lambda)>0, \mbox{  } \forall 1\leq j\leq k\right\}.$$
 
  The linearlized operator of \eqref{equation-cns}  at $u$ is given by
$\mathfrak{L}_uw=\frac{\partial F(D^2u)}{\partial u_{ij}} \cdot w_{ij}.$
 One can check that the eigenvalues of $\left(\frac{\partial F}{\partial u_{ij}}(D^2u)\right)$
 are precisely given by
 \[f_1(\lambda), \cdots, f_n(\lambda) \mbox{ for } \lambda= \lambda(D^2 u).\]
%
In particular, for the Poisson equation  corresponding to  $f(\lambda)=\sum_{i=1}^n\lambda_i$,  
 \[f_i(\lambda)\equiv1, \quad  \forall \lambda\in\mathbb{R}^n, \quad \forall 1\leq i\leq n. \]
 This means that \eqref{equation-cns} is uniformly elliptic.
  However, the fully nonlinear equations analogous to \eqref{equation-cns} fail to be uniformly elliptic in general, which causes various hard difficulties in the investigation, especially in proof of  
  \textit{a priori} (interior) estimates. Consequently,
 it is important to compare $f_i(\lambda)$ with $\sum_{j=1}^n f_j(\lambda)$.  
 This leads to the notion of partial uniform ellipticity.
 \begin{definition} [Partial uniform ellipticity] \label{def-PUE}
 	Let $H$ be a symmetric nonempty subset of $\Gamma$.
 	We say that $f$ is of 
 	\textit{$\mathrm{m}$-uniform ellipticity} in $H$,
 	if \eqref{elliptic-weak} holds 
 	 and there exists a uniform positive constant $\vartheta$ such that 
 	for any $\lambda\in H$ with  $\lambda_1 \leq \cdots \leq\lambda_n$,
 	\begin{equation}
 		\label{PUE1}
 		\begin{aligned}
 			f_{{i}}(\lambda) \geq   \vartheta \sum_{j=1}^{n}f_j(\lambda)>0, \quad \forall 1\leq i\leq \mathrm{m}.
 		\end{aligned}
 	\end{equation}
 	In particular, \textit{$n$-uniform ellipticity} is also called fully uniform ellipticity.  
 \end{definition}

The author \cite{yuan-PUE-conformal}\renewcommand{\thefootnote}{\fnsymbol{footnote}}\footnote{The paper  \cite{yuan-PUE-conformal} is essentially extracted from 
	[arXiv:2011.08580] and [arXiv:2101.04947].}
%
 introduced an integer $\kappa_\Gamma$   for $\Gamma$ 
 \begin{equation}
 	\label{kappa_Gamma}
 	\begin{aligned}
 		\kappa_{\Gamma}=\max\left\{k: (-\alpha_1,\cdots,-\alpha_k,\alpha_{k+1},\cdots, \alpha_n)\in \Gamma, \mbox{ where } \alpha_j>0, \mbox{ } \forall 1\leq j\leq n\right\}  
 \end{aligned}\end{equation}
and proved that the concave symmetric functions satisfying 
\begin{equation}
	\label{addistruc}
	\begin{aligned}
		\lim_{t\rightarrow +\infty}f(t\lambda)>f(\mu) \mbox{ for any } \lambda, \mbox{ }\mu\in \Gamma
	\end{aligned}
\end{equation}   
is exactly of $(\kappa_\Gamma+1)$-uniform ellipticity in $\Gamma$.
More precisely, there exists a uniform positive constant depending only on $\Gamma$ such that for any $\lambda\in\Gamma$ with $\lambda_1 \leq \cdots \leq\lambda_n$,
\begin{equation}
	\label{yuan-k+1-equ}
	\begin{aligned}
		f_{{i}}(\lambda) \geq   \vartheta_{\Gamma} \sum_{j=1}^{n}f_j(\lambda), \quad \forall  1\leq i\leq 1+ \kappa_{\Gamma}. 
	\end{aligned}
\end{equation}
In the case $(f,\Gamma)=(\sigma_k^{1/k},\Gamma_k)$,   \eqref{yuan-k+1-equ} was proved by Lin-Trudinger \cite{Lin1994Trudinger}.
Such a partial uniform ellipticity is relevant to 
various partial differential 
equations of elliptic and parabolic type. 
A surprising consequence of the conclusion \eqref{yuan-k+1-equ} is that a type 2 cone means in some sense that the corresponding equations 
 are  uniformly elliptic. 
 %
 
However, the condition \eqref{addistruc} is not fulfilled in some situations. 
For instance, 
it does not allow
\begin{equation}
	\label{condition-K1}
	\begin{aligned}
	\sum_{i=1}^n f_i(\lambda)\lambda_{i}\geq 
	-K\sum_{i=1}^n f_i(\lambda),  \mbox{ for some } K\geq 0, \quad
	\forall 
	 \underline{A}\leq f(\lambda)\leq \overline{A}. 
	\end{aligned}
\end{equation}
Such a condition includes 
 \begin{equation}
\label{related-condition1}
\sum_{i=1}^n f_i(\lambda)\lambda_{i}\geq 	\delta>0, \quad	\forall 
	 \underline{A}\leq f(\lambda)\leq \overline{A}
	 \end{equation} 
	 as a special case.
These two conditions
 appeared in the study of  
   certain fully nonlinear equations from differential geometry, see e.g. \cite{CNS5,Guan2004Spruck,ShengUrbasWang-Duke,Trudinger90,Guan12a,GSS14} and the references therein.
We shall remark that 
  assumptions \eqref{related-condition1} and \eqref{condition-K1} 
 hold only on the range of the
given function.

Motivated by this and related topics, 
 it would be necessary to 
derive the level set version of partial uniform ellipticity.



 



 Before stating results we introduce some notions and impose 
 appropriate assumptions.
 For $\sigma$, we 
 denote the level set by
 \[\partial\Gamma^\sigma=\{\lambda\in\Gamma: f(\lambda)=\sigma\}.\]
 Conditions \eqref{concave} and \eqref{elliptic-weak}   imply that  (see Lemma \ref{k-buchong2}) 
 \begin{equation}
 \label{sumfi-02}
 \sum_{i=1}^n f_i(\lambda)>0  \mbox{ for }
 f(\lambda)<\sup_\Gamma f.\end{equation}
 As a result, the level set $\partial \Gamma^\sigma$ 
 (when $\partial\Gamma^\sigma\neq\emptyset$) is a smooth complete noncompact convex hypersurface. 
 Throughout this paper we assume
 \[\sigma<\sup_{\Gamma}f \mbox{ and } \partial\Gamma^\sigma\neq\emptyset.\]
  Let's denote 
  \[t_\lambda=\frac{\sum_{i=1}^n f_i(\lambda)\lambda_i}{\sum_{j=1}^n f_j(\lambda)},\quad 
  \vec{\bf 1}=(1,\cdots,1)\in\mathbb{R}^n.\] 
 Geometrically, the tangent plane $T_{\lambda}\partial\Gamma^\sigma$ of $\partial \Gamma^\sigma$ at $\lambda\in\partial\Gamma^\sigma$,   intersects the diagonal at $t_\lambda \vec{\bf 1}$,   i.e.
 $$T_\lambda\partial\Gamma^\sigma\cap\{t\vec{\bf1}: t\in\mathbb{R}\}=\{t_\lambda\vec{\bf1}\}.$$

 We assume $t_\lambda$ has lower bound
 \begin{equation}
 	\label{key1}
 	\begin{aligned}
 		\liminf_{|\lambda|\rightarrow+\infty, \lambda\in\partial \Gamma^\sigma}  t_\lambda >-\infty.
 	\end{aligned}
 \end{equation}
 And then we denote 
 $$\tau_\sigma=\inf_{ \lambda\in\partial \Gamma^\sigma}  t_\lambda.$$

 Let $c_\sigma$ be the positive constant with 
 $f(c_\sigma \vec{\bf 1})=\sigma$.
 By \eqref{concave}, we know
 \begin{equation}
 	\tau_\sigma\leq c_\sigma  \nonumber
 \end{equation}
 with equality holding if and only if 
 \[f_1(\lambda)=f_2(\lambda)=\cdots =f_n(\lambda), 
 \quad \forall \lambda\in\partial\Gamma^\sigma.\]
 Consequently, we assume throughout this paper that
 \begin{equation}
 	\label{012}
 	\begin{aligned}
 		\tau_\sigma<c_\sigma. \nonumber
 	\end{aligned}
 \end{equation}

 \begin{definition}
 	\label{def-k}
	Let $\Gamma_{\sigma,f}$ denote the cone 
 	$$
 	\Gamma_{\sigma,f}=  
 	\left\{t(\lambda-\tau_\sigma \vec{\bf 1}): \lambda\in\partial \Gamma^\sigma, t>0\right\}.$$
 	Furthermore $\Gamma_{\sigma,f}(1):=\partial\Gamma^\sigma-\tau_\sigma\vec{\bf 1}$ simply denotes a slice of $\Gamma_{\sigma,f}$.
 	For such $\Gamma_{\sigma,f}$, we define
 	$$\kappa_{\Gamma_{\sigma,f}}=\max \left\{k: (-\alpha_1,\cdots,-\alpha_k,\alpha_{k+1},\cdots,\alpha_n)\in \Gamma_{\sigma,f}, \mbox{  } \alpha_i>0\right\}.$$
 \end{definition}
 

 
Below we state the results on partial uniform ellipticity.
 \begin{theorem} 
 	\label{mainthm-1}
 	Assume \eqref{concave}, \eqref{elliptic-weak} and \eqref{key1} hold.
 	Then there exists a uniform positive constant $\vartheta_{\Gamma_{\sigma,f}}$ depending only on $\Gamma_{\sigma,f}$ such that 
 	for each $ \lambda\in \partial\Gamma^\sigma$ with
 	$\lambda_1 \leq \cdots \leq\lambda_n$,
 	\begin{equation}
 		\begin{aligned}
 			f_{{i}}(\lambda) \geq   \vartheta_{\Gamma_{\sigma,f}} \sum_{j=1}^{n}f_j(\lambda), \quad \forall  1\leq i\leq 1+ \kappa_{\Gamma_{\sigma,f}}. \nonumber
 		\end{aligned}
 	\end{equation}
 	
 \end{theorem}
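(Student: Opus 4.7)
The plan is to convert the supporting-hyperplane inequality coming from concavity of $f$ into a statement about the fixed cone $\Gamma_{\sigma,f}$, and then to read off the desired bound from the combinatorial content of $\kappa_{\Gamma_{\sigma,f}}$. The overall strategy parallels that of \cite{yuan-PUE-conformal} for partial uniform ellipticity on $\Gamma$, but with the ambient cone $\Gamma$ replaced by $\Gamma_{\sigma,f}$, and with \eqref{key1} playing the role formerly played by \eqref{addistruc}.

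First I would derive the key supporting inequality. Since $\{f\geq \sigma\}$ is convex and $\lambda\in\partial\Gamma^\sigma$, for every $\mu\in \partial\Gamma^\sigma$ we have $\sum_{i=1}^n f_i(\lambda)(\mu_i-\lambda_i)\geq 0$; rewriting via $t_\lambda$ and invoking $t_\lambda\geq \tau_\sigma$ yields
$$\sum_{i=1}^n f_i(\lambda)(\mu_i-\tau_\sigma)\geq 0\qquad \text{for all } \mu\in \partial\Gamma^\sigma.$$
Setting $\tilde{\mu}=\mu-\tau_\sigma\vec{\bf 1}$, this asserts that $\vec{f}(\lambda)$ has nonnegative inner product with every element of $\Gamma_{\sigma,f}(1)$. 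By positive homogeneity in $\tilde\mu$ the inequality extends to all of $\Gamma_{\sigma,f}$, and by continuity to $\overline{\Gamma_{\sigma,f}}$.

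Next I would exploit the definition of $\kappa:=\kappa_{\Gamma_{\sigma,f}}$: there exist positive numbers $\alpha_1,\dots,\alpha_n>0$ such that $\tilde\mu_*:=(-\alpha_1,\dots,-\alpha_\kappa,\alpha_{\kappa+1},\dots,\alpha_n)$ lies in $\Gamma_{\sigma,f}$. Applying the support inequality to $\tilde\mu_*$ gives
$$\sum_{i=\kappa+1}^n \alpha_i f_i(\lambda)\geq \sum_{i=1}^{\kappa}\alpha_i f_i(\lambda).$$
Concavity and symmetry of $f$ together with $\lambda_1\leq\cdots\leq\lambda_n$ force $f_1(\lambda)\geq\cdots\geq f_n(\lambda)\geq 0$, so the left-hand side is at most $\bigl(\sum_{i>\kappa}\alpha_i\bigr)f_{1+\kappa}(\lambda)$ and the right-hand side is at least $\bigl(\min_{i\leq \kappa}\alpha_i\bigr)\sum_{i=1}^\kappa f_i(\lambda)$. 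Rearranging yields $f_{1+\kappa}(\lambda)\geq c\sum_{i=1}^\kappa f_i(\lambda)$ for a positive $c$ depending only on the $\alpha_i$'s. Combining with $\sum_{i>\kappa} f_i(\lambda)\leq (n-\kappa) f_{1+\kappa}(\lambda)$ converts this into $f_{1+\kappa}(\lambda)\geq \vartheta_{\Gamma_{\sigma,f}}\sum_{j=1}^n f_j(\lambda)$; the inequality for all $i\leq 1+\kappa$ then follows from $f_i(\lambda)\geq f_{1+\kappa}(\lambda)$.

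The delicate point is the passage from the pointwise supporting inequality at $\lambda$ to a cone-level inequality independent of $\lambda$; this is precisely where \eqref{key1} is used essentially, since if $\tau_\sigma=-\infty$ the shift making $\partial\Gamma^\sigma-\tau_\sigma\vec{\bf 1}$ land in a cone would be unavailable. The other technical issue worth verifying is that the resulting constant $\vartheta_{\Gamma_{\sigma,f}}$ is truly intrinsic to $\Gamma_{\sigma,f}$: this is because the witness vector $\tilde{\mu}_*$ exists by the very definition of $\kappa_{\Gamma_{\sigma,f}}$, so one can minimize $c$ over admissible choices of $\alpha_i$ and thereby extract a constant depending only on the cone. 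I expect this bookkeeping to be routine once the supporting-hyperplane step is in place.
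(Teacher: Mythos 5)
Your proposal is correct and follows essentially the same route as the paper: the supporting-hyperplane bound $\sum_i f_i(\lambda)\mu_i\geq 0$ for $\mu\in\Gamma_{\sigma,f}$ (the paper's Lemma~\ref{lemma3.4-2}) is derived exactly as you do from \eqref{concave-1} and $t_\lambda\geq\tau_\sigma$, and then tested against a witness vector for $\kappa_{\Gamma_{\sigma,f}}$ together with the ordering $f_1\geq\cdots\geq f_n\geq 0$ to extract the ratio bound. The only minor omission is the degenerate case $\kappa_{\Gamma_{\sigma,f}}=0$, where your argument produces a vacuous inequality and one must instead invoke the trivial bound $f_1\geq\tfrac{1}{n}\sum_j f_j$, as the paper notes separately.
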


 \begin{remark}
When replacing \eqref{elliptic-weak} and \eqref{key1}  by \eqref{addistruc}, 
 Theorem \ref{mainthm-1} gives back 
 \eqref{yuan-k+1-equ}.
 \end{remark}
As a consequence of Theorem \ref{mainthm-1},  we can confirm an important inequality.  
 \begin{theorem}
 	\label{coro1.6}
 	Suppose, in addition to 
 	\eqref{concave} and \eqref{elliptic-weak}, that  \eqref{condition-K1} holds for $\sup_{\partial\Gamma}f<\underline{A}< \overline{A}<\sup_\Gamma f$.
	Then there is a positive constant $\theta$ depending on 
	$K$, $\underline{A}$, $\overline{A}$
 	such that    	\begin{equation}
 		\label{key2-yuan-2}
 		\begin{aligned}
 			f_i(\lambda)\geq \theta \left(1+ \sum_{j=1}^n f_j(\lambda)\right) \,\mbox{ if } \lambda_i\leq -K,\mbox{ } \forall \underline{A}\leq f(\lambda)\leq \overline{A}.
 		\end{aligned}
 	\end{equation}
 	In particular, if $K=0$, i.e. $\sum_{i=1}^n f_i(\lambda)\lambda_i\geq0$ for $\underline{A}\leq f(\lambda)\leq \overline{A}$, then
 	 	\begin{equation}
 		\label{key2-yuan}
 		\begin{aligned}
 			f_i(\lambda)\geq \theta \left(1+ \sum_{j=1}^n f_j(\lambda)\right) \mbox{ if } \lambda_i\leq 0.
 		\end{aligned}
 	\end{equation}
 \end{theorem}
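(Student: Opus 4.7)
The plan is to reduce Theorem~\ref{coro1.6} to Theorem~\ref{mainthm-1}, applied level set by level set for $\sigma \in [\underline{A}, \overline{A}]$, and to supplement its multiplicative conclusion with a concavity-based absolute lower bound on $\sum_j f_j(\lambda)$ in order to recover the additive ``$1+$'' on the right-hand side of \eqref{key2-yuan-2}.

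First, the very definition of $t_\lambda$ renders assumption \eqref{condition-K1} equivalent to the pointwise bound $t_\lambda \geq -K$ on every level set $\partial\Gamma^\sigma$ with $\underline{A} \leq \sigma \leq \overline{A}$. Hence $\tau_\sigma = \inf_{\partial\Gamma^\sigma} t_\lambda \geq -K > -\infty$ for each such $\sigma$, so hypothesis \eqref{key1} is met and $\Gamma_{\sigma,f}$ is well defined. Now fix $\lambda \in \partial\Gamma^\sigma$ with ordered coordinates $\lambda_1 \leq \cdots \leq \lambda_n$ and $\lambda_i \leq -K$. Since $\tau_\sigma \geq -K$, the shifted point $\lambda - \tau_\sigma \vec{\bf 1} \in \Gamma_{\sigma,f}(1)$ has its first $i$ coordinates non-positive. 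A standard perturbation argument within the open symmetric cone $\Gamma_{\sigma,f}$ then yields $\kappa_{\Gamma_{\sigma,f}} \geq i - 1$, and Theorem~\ref{mainthm-1} delivers
\begin{equation*}
f_i(\lambda) \geq \vartheta_{\Gamma_{\sigma,f}} \sum_{j=1}^n f_j(\lambda).
\end{equation*}

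For the additive term, choose $c_0 > 0$ with $f(c_0\vec{\bf 1}) > \overline{A}$; this is possible because $\overline{A} < \sup_\Gamma f$. Concavity \eqref{concave} combined with \eqref{condition-K1} yields
\begin{equation*}
f(c_0\vec{\bf 1}) - \overline{A} \leq f(c_0\vec{\bf 1}) - f(\lambda) \leq \sum_j f_j(\lambda)(c_0 - \lambda_j) \leq (c_0 + K)\sum_j f_j(\lambda),
\end{equation*}
so that $\sum_j f_j(\lambda) \geq \delta := (f(c_0\vec{\bf 1}) - \overline{A})/(c_0 + K) > 0$ uniformly for $\underline{A} \leq f(\lambda) \leq \overline{A}$. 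Averaging this absolute bound against the ratio estimate above produces $f_i(\lambda) \geq \theta\bigl(1 + \sum_j f_j(\lambda)\bigr)$ with $\theta$ of the order of $\vartheta_{\Gamma_{\sigma,f}}\min(1,\delta)/2$. The special case \eqref{key2-yuan} (i.e.\ $K=0$) is then immediate.

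The main obstacle in this scheme is that the constant $\vartheta_{\Gamma_{\sigma,f}}$ supplied by Theorem~\ref{mainthm-1} depends on the cone $\Gamma_{\sigma,f}$, which itself varies with $\sigma$. To obtain a single positive $\theta$ depending only on $K, \underline{A}, \overline{A}$, I would argue that the family $\{\Gamma_{\sigma,f}\}_{\sigma \in [\underline{A},\overline{A}]}$ varies continuously in $\sigma$---using smoothness of $f$ together with continuity of $\sigma \mapsto \tau_\sigma$---and then invoke compactness of $[\underline{A},\overline{A}]$ to conclude that $\inf_\sigma \vartheta_{\Gamma_{\sigma,f}} > 0$. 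A secondary technical point is the perturbation upgrading ``$i$ non-positive coordinates'' to ``$i-1$ strictly negative coordinates'', which is handled by openness of the cones.
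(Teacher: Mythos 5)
Your argument follows the same route as the paper, which proves Theorem~\ref{coro1.6} by citing Proposition~\ref{mainthm-2} (itself a direct repackaging of Theorem~\ref{mainthm-1} via the observation that $\lambda_i\leq-K\leq\tau_\sigma$ forces $i\leq 1+\kappa_{\Gamma_{\sigma,f}}$, or at least $f_i\geq f_{1+\kappa_{\Gamma_{\sigma,f}}}$) together with the lower bound~\eqref{sumfi-2} on $\sum_j f_j$. Your explicit concavity computation $\sum_j f_j(\lambda)\geq\bigl(f(c_0\vec{\bf 1})-\overline{A}\bigr)/(c_0+K)$ is in fact a slight sharpening: the paper's \eqref{sumfi-2} only asserts a $\sigma$-dependent constant $\theta(\sigma)$, whereas your bound visibly depends only on $K$, $\overline{A}$ and a fixed $c_0$. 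The issue you flag at the end---that $\vartheta_{\Gamma_{\sigma,f}}$ varies with $\sigma$, so that a uniform $\theta$ over $[\underline{A},\overline{A}]$ requires a compactness argument---is left implicit in the paper's one-line derivation too, so this is not a defect of your proof relative to the source; note, however, that the continuity of $\sigma\mapsto\tau_\sigma$ you invoke is not immediate, since $\tau_\sigma$ is an infimum over the noncompact hypersurface $\partial\Gamma^\sigma$, and likewise $\sigma\mapsto\vartheta_{\Gamma_{\sigma,f}}$ (a supremum over $\Gamma_{\sigma,f}$) need not be continuous, so one would want either lower semicontinuity or a direct comparison of the cones across the compact interval to make the compactness step airtight.
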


 \begin{remark}
 		The inequality \eqref{key2-yuan}
 		was imposed as a vital assumption by Li \cite{LiYY1991} and later by many experts 
 		to study certain geometric PDEs from classical differential geometry and conformal geometry, see e.g.  \cite{Trudinger90,Guan1991Spruck,ShengUrbasWang-Duke,SChen2007,SChen2009,Guan1999CVPDE,Urbas2002,Guan12a}. 
		Our results can improve related results obtained there.
 	 
 \end{remark}

  The paper is organized as follows.
  The level set version of partial uniform ellipticity is derived in Section \ref{sec2}.
  As applications, we derive the interior estimates for first and second order derivatives for complex fully nonlinear equations with Laplacian terms in Section \ref{sec3}, and briefly discuss 
  real Hessian fully nonlinear equations  in Section \ref{sec5}.  In Appendixes, we summarize and prove some lemmas.


 \section{Partial uniform ellipticity: Level set version} 
  \label{sec2} 
  
   \subsection{Proof of Theorem \ref{mainthm-1}}
 The concavity assumption \eqref{concave} gives
 \begin{equation}\label{concave-1}\begin{aligned}
 		\sum_{i=1}^n f_i(\lambda)(\mu_i-\lambda_i)\geq f(\mu)-f(\lambda), \quad \forall \lambda, \mbox{ }\mu\in\Gamma. 
 \end{aligned}\end{equation}
 First we prove the following lemma. 
 \begin{lemma}
 	\label{lemma3.4-2}
 	Suppose \eqref{concave}, \eqref{elliptic-weak} and \eqref{key1} hold. 
 	Then 
 	\[\sum_{i=1}^n f_i(\lambda)\mu_i\geq0, \quad \forall \lambda\in\partial\Gamma^\sigma, \mbox{  } \forall \mu\in \Gamma_{\sigma,f}.\]
 \end{lemma}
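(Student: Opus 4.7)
The plan is to reduce the claim to an inequality on the level set itself, then combine concavity with the very definition of $\tau_\sigma$.

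First I would parametrize elements of $\Gamma_{\sigma,f}$. By Definition~\ref{def-k}, an arbitrary $\mu\in\Gamma_{\sigma,f}$ has the form $\mu=t(\tilde\lambda-\tau_\sigma\vec{\bf1})$ for some $\tilde\lambda\in\partial\Gamma^\sigma$ and $t>0$. Since $t>0$, it suffices to prove
\[
\sum_{i=1}^n f_i(\lambda)(\tilde\lambda_i-\tau_\sigma)\geq 0, \qquad \forall \lambda,\tilde\lambda\in\partial\Gamma^\sigma.
\]

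Next I would invoke concavity in the form \eqref{concave-1} with the pair $(\lambda,\tilde\lambda)$ on the same level set $\partial\Gamma^\sigma$. Since $f(\lambda)=f(\tilde\lambda)=\sigma$, concavity yields
\[
\sum_{i=1}^n f_i(\lambda)\tilde\lambda_i\;\geq\;\sum_{i=1}^n f_i(\lambda)\lambda_i\;=\;t_\lambda\sum_{j=1}^n f_j(\lambda),
\]
by the very definition of $t_\lambda$. Subtracting $\tau_\sigma\sum_j f_j(\lambda)$ from both sides,
\[
\sum_{i=1}^n f_i(\lambda)(\tilde\lambda_i-\tau_\sigma)\;\geq\;(t_\lambda-\tau_\sigma)\sum_{j=1}^n f_j(\lambda).
\]

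Finally I would close the argument using \eqref{key1} and \eqref{sumfi-02}. By the definition $\tau_\sigma=\inf_{\lambda\in\partial\Gamma^\sigma}t_\lambda$, which is finite thanks to hypothesis \eqref{key1} (together with continuity of $t_\lambda$), one has $t_\lambda-\tau_\sigma\geq 0$ for every $\lambda\in\partial\Gamma^\sigma$. Moreover, since $\sigma<\sup_\Gamma f$, the inequality $\sum_j f_j(\lambda)>0$ in \eqref{sumfi-02} applies. Hence the right-hand side is nonnegative, completing the proof.

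The argument is essentially book-keeping: the only conceptual point to verify is that $\tau_\sigma$ is a finite real number so that $t_\lambda-\tau_\sigma\geq 0$ makes sense, and this is precisely what hypothesis \eqref{key1} secures (near infinity $t_\lambda$ is bounded below, and on any compact region of $\partial\Gamma^\sigma$ continuity of $t_\lambda$ gives a lower bound). So there is no substantial obstacle; the lemma records the geometric meaning of $\tau_\sigma$ as the height at which the tangent planes of $\partial\Gamma^\sigma$ meet the diagonal.
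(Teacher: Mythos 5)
Your proof is correct and takes essentially the same route as the paper: reduce (by scaling) to $\mu = \tilde\lambda - \tau_\sigma\vec{\bf 1}$ with $\tilde\lambda\in\partial\Gamma^\sigma$, apply the concavity inequality \eqref{concave-1} to the pair $\lambda,\tilde\lambda$ on the common level set, and conclude from $t_\lambda\geq\tau_\sigma$ together with $\sum_j f_j(\lambda)>0$. The only addition is your explicit remark on why $\tau_\sigma$ is finite, which the paper leaves implicit.
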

 
 \begin{proof}
 	Given $\lambda\in\partial\Gamma^\sigma$.
 	Without loss of generality, we choose $\mu\in \Gamma_{\sigma,f}(1)$.
 	So $\mu+\tau_\sigma\vec{\bf 1}\in \partial\Gamma^\sigma$.
 	The inequality \eqref{concave-1} simply yields
 	\begin{equation}
 		\label{key-12}
 		\begin{aligned}
 			\sum_{i=1}^n f_i(\lambda)(\tau_\sigma+\mu_i-\lambda_i)\geq 0.  \nonumber
 		\end{aligned}
 	\end{equation}
 	Thus
 	$$\sum_{i=1}^n f_i(\lambda)\mu_i \geq \sum_{i=1}^n f_i(\lambda)\lambda_i-\tau_{\sigma}\sum_{i=1}^n f_i(\lambda)\geq0.$$ 
 	
 \end{proof}
 Let $\lambda_1\leq\cdots\leq\lambda_n$,   the concavity and symmetry of $f$ imply
 $$f_1(\lambda)\geq\cdots\geq f_n(\lambda) \mbox{ and } f_1(\lambda)\geq \frac{1}{n}\sum_{i=1}^n f_i(\lambda).$$
 If $\kappa_{\Gamma_{\sigma,f}}=0$ then Theorem \ref{mainthm-1}  clearly follows. For  $\kappa_{\Gamma_{\sigma,f}}\geq1$, 
 it is a consequence of the following proposition.
 \begin{proposition}
 	\label{yuanrr-2}
 	In addition to \eqref{concave}, \eqref{elliptic-weak} and \eqref{key1}, 
 	we assume $\kappa_{\Gamma_{\sigma,f}}\geq1$.
 	Let
 	$\alpha_1, \cdots, \alpha_n$ be $n$ positive constants with
 	$$(-\alpha_1,\cdots,-\alpha_{\kappa_{\Gamma_{\sigma,f}}}, \alpha_{1+\kappa_{\Gamma_{\sigma,f}}},\cdots, \alpha_n)\in \Gamma_{\sigma,f}.$$
 	Assume in addition that $\alpha_1\geq\cdots\geq \alpha_{\kappa_{\Gamma_{\sigma,f}}}$.
 	Then for  $ \lambda\in \partial\Gamma^\sigma$ with $\lambda_1 \leq \cdots \leq\lambda_n$,
 	\begin{equation}\label{theta1}
 		\begin{aligned}
 			f_{1+\kappa_{\Gamma_{\sigma,f}}}(\lambda)\geq\frac{\alpha_1}{\sum_{i=1+\kappa_{\Gamma_{\sigma,f}}}^n
 				\alpha_i-\sum_{i=2}^{\kappa_{\Gamma_{\sigma,f}}}\alpha_i}f_1(\lambda).
 		\end{aligned}
 	\end{equation}
 \end{proposition}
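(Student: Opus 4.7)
Writing $\kappa = \kappa_{\Gamma_{\sigma,f}}$ for brevity, the plan is to combine Lemma \ref{lemma3.4-2} with the monotonicity $f_1(\lambda) \geq \cdots \geq f_n(\lambda)$, the latter being a direct consequence of the concavity and symmetry of $f$ together with the ordering $\lambda_1 \leq \cdots \leq \lambda_n$ (as already noted just before the statement of the proposition). First I would apply Lemma \ref{lemma3.4-2} to the vector
\[
\mu = (-\alpha_1, -\alpha_2, \ldots, -\alpha_\kappa, \alpha_{\kappa+1}, \ldots, \alpha_n),
\]
which lies in $\Gamma_{\sigma,f}$ by hypothesis. The lemma immediately produces
\[
\sum_{i=\kappa+1}^n \alpha_i f_i(\lambda) \;\geq\; \alpha_1 f_1(\lambda) + \sum_{i=2}^\kappa \alpha_i f_i(\lambda).
\]

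Next I would squeeze out $f_{\kappa+1}(\lambda)$ by using the monotonicity of $f_i(\lambda)$ on each side: since $f_i(\lambda) \leq f_{\kappa+1}(\lambda)$ for $i \geq \kappa+1$, the left-hand side is at most $f_{\kappa+1}(\lambda) \sum_{i=\kappa+1}^n \alpha_i$; and since $f_i(\lambda) \geq f_{\kappa+1}(\lambda)$ for $i \leq \kappa$, the tail on the right satisfies $\sum_{i=2}^\kappa \alpha_i f_i(\lambda) \geq f_{\kappa+1}(\lambda) \sum_{i=2}^\kappa \alpha_i$. Chaining these three inequalities and collecting the $f_{\kappa+1}(\lambda)$ terms yields
\[
f_{\kappa+1}(\lambda) \left(\sum_{i=\kappa+1}^n \alpha_i - \sum_{i=2}^\kappa \alpha_i\right) \;\geq\; \alpha_1 f_1(\lambda),
\]
which is exactly \eqref{theta1} after dividing.

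The main caveat is that division by the coefficient $\sum_{i=\kappa+1}^n \alpha_i - \sum_{i=2}^\kappa \alpha_i$ requires this quantity to be strictly positive in order to give a nontrivial bound. The ordering assumption $\alpha_1 \geq \cdots \geq \alpha_\kappa$ is present precisely to sharpen the inequality, by pairing the largest negative coordinate of $\mu$ with the largest of the $f_i(\lambda)$. If the coefficient happened to be non-positive in some degenerate configuration, the chained inequality would already force $\alpha_1 f_1(\lambda) \leq 0$ and hence $f_1(\lambda) = 0$, rendering \eqref{theta1} trivial; so no further case analysis is required. The substance of the proof is thus the three-step chain of inequalities outlined above, and the only point deserving attention is getting the direction of each monotonicity estimate right so that they compose correctly.
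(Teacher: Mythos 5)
Your argument is correct and is essentially the paper's own proof spelled out: both use Lemma~\ref{lemma3.4-2} applied to the vector $(-\alpha_1,\dots,-\alpha_{\kappa},\alpha_{\kappa+1},\dots,\alpha_n)$ together with the monotonicity $f_1(\lambda)\geq\cdots\geq f_n(\lambda)$, and your three-step chain of inequalities is precisely the ``iteration'' that the paper invokes but does not write out. Your closing worry about the sign of the coefficient is in fact moot: applying Lemma~\ref{lemma3.4-2} with $\lambda=c_\sigma\vec{\bf 1}$ (where all $f_i$ are equal and, by Lemma~\ref{k-buchong2}, positive) shows every $\mu\in\Gamma_{\sigma,f}$ has $\sum_i\mu_i\geq 0$, whence $\sum_{i=\kappa+1}^n\alpha_i-\sum_{i=2}^{\kappa}\alpha_i\geq\alpha_1>0$ automatically.
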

 
 \begin{proof}
 	According to Lemma \ref{lemma3.4-2}, we have
 	\begin{equation}
 		\label{good1-yuan}
 		\begin{aligned}
 			-\sum_{i=1}^{\kappa_{\Gamma_{\sigma,f}}} \alpha_i f_i(\lambda)+\sum_{i=1+\kappa_{\Gamma_{\sigma,f}}}^n \alpha_i f_i(\lambda) \geq0. \nonumber
 		\end{aligned}
 	\end{equation}
 	This simply yields $f_{1+\kappa_{\Gamma_{\sigma,f}}}(\lambda)
 	\geq\frac{\alpha_1}{\sum_{i=1+\kappa_{\Gamma_{\sigma,f}}}^n \alpha_i}f_1(\lambda)$.
 	In addition, 
 	one derives \eqref{theta1} by using iteration. 
 	
 \end{proof}
 
\begin{remark}
	In the case $\kappa_{\Gamma_{\sigma,f}}\geq1$, the constant 
 $\vartheta_{\Gamma_{\sigma,f}}$  in Theorem \ref{mainthm-1} can be achieved as
	\[\vartheta_{\Gamma_{\sigma,f}}=\sup_{(-\alpha_1,\cdots,-\alpha_{\kappa_{\Gamma_{\sigma,f}}}, \alpha_{1+\kappa_{\Gamma_{\sigma,f}}},\cdots, \alpha_n)\in \Gamma_{\sigma,f}}\frac{\alpha_1/n}{\sum_{i=1+\kappa_{\Gamma_{\sigma,f}}}^n
		\alpha_i-\sum_{i=2}^{\kappa_{\Gamma_{\sigma,f}}}\alpha_i}.\]
\end{remark}
 
 

  \subsection{A new criterion for $f$ satisfying \eqref{addistruc}}

 Building on Lemma 
 \ref{lemma3.4}, we can deduce a new criterion for \eqref{addistruc}.
 \begin{lemma}
 	\label{lemma24}
 	In the presence of \eqref{concave},   \eqref{elliptic-weak}  and 
 	\begin{equation}
 		\label{sumfi>0}
 		\begin{aligned}
 			\sum_{i=1}^n f_i(\lambda)>0 \mbox{ in } \Gamma, 
 		\end{aligned}
 	\end{equation} 
 condition  \eqref{addistruc} 
 	is equivalent to 
 	\begin{equation}
 		\label{addistruc24}
 		\begin{aligned}
 		\Gamma\subseteq \Gamma_{\sigma,f}, \quad \forall \sup_{\partial \Gamma}f<\sigma<\sup_\Gamma f.
 		\end{aligned}
 	\end{equation} 
 \end{lemma}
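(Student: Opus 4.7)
The plan is to derive both implications by funneling them through the pointwise inequality $\sum_{i=1}^{n} f_{i}(\lambda)\omega_{i}\ge 0$: one direction furnishes it via Lemma~\ref{lemma3.4-2}, the other via its cone-version counterpart Lemma~\ref{lemma3.4} (the ``Building on'' reference). From this inequality I then deduce the intermediate geometric fact that
\[
\overline{\Gamma}\subseteq \mathrm{rec}\bigl(\{f\ge \sigma\}\bigr),
\]
i.e.\ every direction in $\overline{\Gamma}$ is a recession direction of the super-level set; once this is in hand, each side of the equivalence becomes a short convex-geometric manipulation.

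For the direction $\eqref{addistruc24}\Rightarrow\eqref{addistruc}$, combining $\Gamma\subseteq \Gamma_{\sigma,f}$ with Lemma~\ref{lemma3.4-2} yields $\sum_{i} f_{i}(\lambda)\omega_{i}\ge 0$ for all $\lambda\in\partial\Gamma^{\sigma}$ and $\omega\in\Gamma$, and letting $\sigma$ vary upgrades this to any $\lambda\in\Gamma$ with $f(\lambda)$ in the admissible range. A continuation argument along the concave function $s\mapsto f(\nu+s\omega)$ (its derivative is nonnegative whenever the function value equals the threshold $\sigma$) then gives $\overline{\Gamma}\subseteq \mathrm{rec}(\{f\ge \sigma\})$. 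Given $\lambda\in\Gamma$, \eqref{addistruc24} supplies $s_{0}>0$ with $\tau_{\sigma}\vec{\bf 1}+s_{0}\lambda\in \{f\ge\sigma\}$, and I write
\[
t\lambda \;=\; \bigl(\tau_{\sigma}\vec{\bf 1}+s_{0}\lambda\bigr) \;+\; (t-s_{0})\Bigl(\lambda-\tfrac{\tau_{\sigma}}{t-s_{0}}\vec{\bf 1}\Bigr).
\]
For $t$ large the bracketed vector lies in $\Gamma$ (since $\Gamma$ is open), hence in $\mathrm{rec}(\{f\ge\sigma\})$, so $t\lambda\in \{f\ge\sigma\}$. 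Therefore $\lim_{t\to\infty} f(t\lambda)\ge \sigma$, and choosing $\sigma>f(\mu)$ in the admissible interval gives \eqref{addistruc}.

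For the converse $\eqref{addistruc}\Rightarrow\eqref{addistruc24}$, Lemma~\ref{lemma3.4} provides $\sum_{i} f_{i}(\lambda)\mu_{i}\ge 0$ for all $\lambda,\mu\in \Gamma$, and the same continuation argument again yields $\overline{\Gamma}\subseteq \mathrm{rec}(\{f\ge\sigma\})$. Starting from $c_{\sigma}\vec{\bf 1}\in\partial\Gamma^{\sigma}$ and any $\mu\in\Gamma$, set $\mu':=s\mu-(c_{\sigma}-\tau_{\sigma})\vec{\bf 1}$. For $s$ large, $\mu'/s$ is close to $\mu\in\Gamma$, so $\mu'\in \mathrm{rec}(\{f\ge\sigma\})$, and therefore
\[
\tau_{\sigma}\vec{\bf 1}+s\mu \;=\; c_{\sigma}\vec{\bf 1}+\mu' \;\in\; \{f\ge\sigma\}.
\]
Hence $\mu\in\Gamma_{\sigma,f}$, proving $\Gamma\subseteq \Gamma_{\sigma,f}$.

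The step I expect to be the main obstacle is this affine shift between a ray emanating from $\tau_{\sigma}\vec{\bf 1}$ and one emanating from the origin: when $\tau_{\sigma}>0$, the simple monotonicity $f_{i}\ge 0$ is too weak to relate the two, so one must use the finer fact that every $\lambda\in\Gamma$ lies in the \emph{interior} of the recession cone---a consequence of $\Gamma$ being open in $\mathbb{R}^{n}$---which allows the bounded translation by $\tau_{\sigma}\vec{\bf 1}$ to be absorbed into a small perturbation of direction after dilating by a large scalar. Verifying $\Gamma\subseteq \mathrm{int}\,\mathrm{rec}(\{f\ge\sigma\})$ carefully is the crux of the argument.
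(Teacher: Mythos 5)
Your proof is correct, and it takes a genuinely different route from the paper's. The paper's $\Leftarrow$ direction is a two-line citation chain: $\Gamma\subseteq\Gamma_{\sigma,f}$ with $\sigma=f(\lambda)$ plus Lemma~\ref{lemma3.4-2} gives $\sum f_i(\lambda)\mu_i\ge0$ for all $\lambda,\mu\in\Gamma$, and then Lemma~\ref{lemma-new-2} immediately yields \eqref{addistruc}. Its $\Rightarrow$ direction is an equally short intermediate-value argument: \eqref{addistruc} gives $f(t\lambda)>\sigma$ for some $t$, $\tau_\sigma\ge0$ by Lemma~\ref{lemma3.4}, and monotonicity of $f$ along $\vec{\bf 1}$ produces a $t_0$ with $f(t_0\lambda+\tau_\sigma\vec{\bf 1})=\sigma$. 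You instead package both directions through the recession cone of the superlevel set $\{f\ge\sigma\}$ and explicit affine decompositions, essentially re-deriving the content of Lemma~\ref{lemma-new-2} geometrically rather than citing it. This buys a more self-contained and visual argument that is also agnostic to the sign of $\tau_\sigma$ (the paper's $\Rightarrow$ leans on $\tau_\sigma\ge0$; your decomposition $t\lambda=(\tau_\sigma\vec{\bf 1}+s_0\lambda)+(t-s_0)(\lambda-\tfrac{\tau_\sigma}{t-s_0}\vec{\bf 1})$ and the analogous one with $c_\sigma\vec{\bf 1}$ do not), at the cost of being considerably longer. Two small points worth recording explicitly: (i) in the $\Rightarrow$ direction you conclude $\mu\in\Gamma_{\sigma,f}$ from $\tau_\sigma\vec{\bf 1}+s\mu\in\{f\ge\sigma\}$, but $\Gamma_{\sigma,f}$ is by definition the cone over $\partial\Gamma^\sigma=\{f=\sigma\}$, so one still needs the (easy, $\tau_\sigma<c_\sigma$ driven) observation that the cone with apex $\tau_\sigma\vec{\bf 1}$ over $\{f\ge\sigma\}$ equals the cone over $\partial\Gamma^\sigma$, or equivalently an IVT step along the ray; the paper elides exactly the same point. (ii) Your continuation argument for $\overline{\Gamma}\subseteq\mathrm{rec}(\{f\ge\sigma\})$ genuinely needs $\sum_i f_i(\lambda)\omega_i\ge0$ at \emph{every} admissible level, not merely at $\partial\Gamma^\sigma$ (a concave $g$ with $g(s_0)=\sigma$, $g'(s_0)=0$ can still drop below $\sigma$), so the ``letting $\sigma$ vary'' upgrade you perform is not cosmetic but essential, and is the same implicit appeal to every $f(\lambda)$ lying in the admissible range that the paper also makes.
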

 
 \begin{proof}
 	
 	$\Leftarrow$ Fix $\lambda$, $\mu\in\Gamma$, let $\sigma=f(\lambda)$.
 	Since $\Gamma\subseteq\Gamma_{\sigma,f}$, 
 	we have $\sum_{i=1}^n f_i(\lambda)\mu_i\geq 0$ by Lemma \ref{lemma3.4-2}.  Thus
  \eqref{addistruc} holds by Lemma  \ref{lemma-new-2}.

 	$\Rightarrow$ For $\lambda\in\Gamma$ and $\sup_{\partial\Gamma}f<\sigma<\sup_{\Gamma}f$, one has $f(t\lambda)>\sigma$ for some $t>0$. By Lemma \ref{lemma3.4}, $\tau_\sigma\geq0$. There is $0<t_0<t$ such that $f(t_0\lambda+\tau_\sigma\vec{\bf1})=\sigma$.  
 	This yields
 	$$\Gamma\subseteq\Gamma_{\sigma,f}.$$
 	
 \end{proof}

 The $(\kappa_\Gamma+1)$-uniform ellipticity as asserted in \eqref{yuan-k+1-equ} follows as a consequence of Theorem \ref{mainthm-1}, Lemma \ref{lemma24} and Corollary \ref{coro3.2}. 

 \subsection{Confirming an inequality}
 \begin{proposition}
 	\label{mainthm-2}
 	Suppose  \eqref{concave}, \eqref{elliptic-weak} and \eqref{key1} hold.
 	Then for $\lambda\in\partial\Gamma^\sigma$, 
 	\begin{equation}
 		\label{345}
 		\begin{aligned}
 			f_i(\lambda)\geq \vartheta_{\Gamma_{\sigma,f}}\sum_{j=1}^n f_j(\lambda) \mbox{ whenever } \lambda_i\leq \tau_\sigma.
 		\end{aligned}
 	\end{equation} 
 	In particular,  replacing \eqref{key1} by 
 	\begin{equation}
 		\label{key-123}
 		\sum_{i=1}^n f_i(\lambda)\lambda_i\geq 0 \mbox{ in } \partial\Gamma^\sigma,
 	\end{equation}
 	then for any $\lambda\in\partial\Gamma^\sigma$ we get
 	\begin{equation}
 		\label{key1-yuan}
 		\begin{aligned}
 			f_i(\lambda)\geq  \vartheta_{\Gamma_{\sigma,f}}\sum_{j=1}^n f_j(\lambda)  \mbox{ if } \lambda_i\leq0. 
 		\end{aligned}
 	\end{equation}
 \end{proposition}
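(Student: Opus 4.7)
The strategy is to reduce \eqref{345} to Theorem \ref{mainthm-1} via an index count, and then to deduce \eqref{key1-yuan} from the observation that \eqref{key-123} forces $\tau_\sigma \geq 0$. By the symmetry of $f$ we may reorder and assume $\lambda_1 \leq \cdots \leq \lambda_n$; concavity together with symmetry then gives $f_1(\lambda) \geq \cdots \geq f_n(\lambda)$. Fix $i_0$ to be the largest index with $\lambda_{i_0} \leq \tau_\sigma$ (if no such index exists there is nothing to prove). Since $f_i(\lambda) \geq f_{i_0}(\lambda)$ for every $i \leq i_0$, it is enough to bound $f_{i_0}(\lambda)$ from below, and by Theorem \ref{mainthm-1} this in turn reduces to proving the index bound $i_0 \leq 1 + \kappa_{\Gamma_{\sigma,f}}$.

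To establish this bound, set $v := \lambda - \tau_\sigma \vec{\bf 1} \in \Gamma_{\sigma,f}(1) \subset \Gamma_{\sigma,f}$, so that $v_j \leq 0$ for $j \leq i_0$ and, by maximality of $i_0$, $v_j > 0$ for $j > i_0$. The edge case $i_0 = n$ is ruled out: were all $\lambda_j \leq \tau_\sigma$, then $\sum_j f_j(\lambda)\lambda_j \leq \tau_\sigma \sum_k f_k(\lambda)$ would force equality throughout, and hence $\lambda \equiv \tau_\sigma \vec{\bf 1}$; but this contradicts $f(\lambda) = \sigma > f(\tau_\sigma \vec{\bf 1})$, the latter inequality coming from $\tau_\sigma < c_\sigma$ and the strict radial monotonicity of $f$ guaranteed by \eqref{sumfi-02}. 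Next, $\Gamma_{\sigma,f}$ is an open cone — concretely, it is the cone at $\tau_\sigma \vec{\bf 1}$ spanned by the smooth noncompact convex hypersurface $\partial\Gamma^\sigma$, which lies strictly on one side of $\tau_\sigma \vec{\bf 1}$. Consequently, for sufficiently small $\epsilon,\delta > 0$ the perturbation
$$w := v - \epsilon \sum_{j \leq i_0} e_j + \delta \sum_{j > i_0} e_j$$
still lies in $\Gamma_{\sigma,f}$, and has $i_0$ strictly negative coordinates and $n - i_0$ strictly positive ones. By the symmetry of $\Gamma_{\sigma,f}$ (inherited from the symmetry of $\Gamma$ and of $\tau_\sigma \vec{\bf 1}$), permuting $w$ produces a vector of the canonical sign pattern in Definition \ref{def-k}, giving $\kappa_{\Gamma_{\sigma,f}} \geq i_0$ — stronger than needed. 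Theorem \ref{mainthm-1} then yields $f_{i_0}(\lambda) \geq \vartheta_{\Gamma_{\sigma,f}} \sum_j f_j(\lambda)$, proving \eqref{345}.

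For the second statement, condition \eqref{key-123} gives $t_\lambda \geq 0$ for every $\lambda \in \partial\Gamma^\sigma$, whence $\tau_\sigma = \inf_{\lambda \in \partial\Gamma^\sigma} t_\lambda \geq 0$; then any index $i$ with $\lambda_i \leq 0$ automatically satisfies $\lambda_i \leq \tau_\sigma$, and \eqref{key1-yuan} follows from \eqref{345}. The main obstacle is the perturbation step: one must justify the openness of $\Gamma_{\sigma,f}$ and verify that the non-positive entries of $v$ can indeed be pushed to strictly negative entries while maintaining membership in the cone. An alternative that sidesteps the openness issue would be to feed vectors $\mu$ with the prescribed sign pattern directly into Lemma \ref{lemma3.4-2}, using approximation by strictly negative perturbations only as a limiting device.
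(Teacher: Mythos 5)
Your overall reduction is the same as the paper's: order $\lambda$, translate by $\tau_\sigma\vec{\bf 1}$ to land in $\Gamma_{\sigma,f}$, bound the index of the ``small'' eigenvalues by $1+\kappa_{\Gamma_{\sigma,f}}$, and invoke Theorem \ref{mainthm-1}. The deduction of \eqref{key1-yuan} from \eqref{345} via $\tau_\sigma\geq 0$ is exactly the paper's.

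The difference, and where there is a real gap, is in the index count. You prove the \emph{stronger} intermediate claim $\kappa_{\Gamma_{\sigma,f}}\geq i_0$, where $i_0$ is the number of entries $\lambda_j\leq\tau_\sigma$. To get this you must push the zero coordinates of $v=\lambda-\tau_\sigma\vec{\bf 1}$ \emph{strictly negative} via $v-\epsilon\sum_{j\leq i_0}e_j$, and that step requires $\Gamma_{\sigma,f}$ to be open near $v$. But the parametrization $(t,\lambda)\mapsto t(\lambda-\tau_\sigma\vec{\bf 1})$ fails to be a submersion precisely when $\lambda-\tau_\sigma\vec{\bf 1}$ is tangent to $\partial\Gamma^\sigma$ at $\lambda$, i.e.\ when $t_\lambda=\tau_\sigma$; since $\tau_\sigma=\inf t_\lambda$, this can occur, and there $v$ may sit on the topological boundary of $\Gamma_{\sigma,f}$, so the perturbation $-\epsilon\sum e_j$ may exit the cone. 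In that situation your claim $\kappa_{\Gamma_{\sigma,f}}\geq i_0$ can fail even though the conclusion of the proposition is still true. The paper only needs the weaker statement $\lambda_{1+\kappa_{\Gamma_{\sigma,f}}}\geq\tau_\sigma$ (equivalently, $v$ has at most $\kappa_{\Gamma_{\sigma,f}}$ \emph{strictly} negative entries), which requires pushing zero entries to the positive side only --- a perturbation into $\Gamma_n$, and hence benign. The tie $\lambda_i=\tau_\sigma=\lambda_{1+\kappa_{\Gamma_{\sigma,f}}}$ is then handled without any openness: the standard fact for symmetric concave $f$ that $\lambda_i\leq\lambda_j\Rightarrow f_i(\lambda)\geq f_j(\lambda)$ gives $f_i=f_{1+\kappa_{\Gamma_{\sigma,f}}}$ when the two coordinates coincide, and Theorem \ref{mainthm-1} applies to the latter index. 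This also makes your separate ruling out of $i_0=n$ unnecessary (and that sub-argument itself has a small hole: $\sum f_j(\lambda_j-\tau_\sigma)=0$ with $\lambda_j\leq\tau_\sigma$ forces $\lambda_j=\tau_\sigma$ only where $f_j>0$, not for all $j$). You did flag the openness concern yourself and correctly identified Lemma \ref{lemma3.4-2} as the safer route; reorganizing around the weaker index inequality together with the pairwise monotonicity fact would close the gap cleanly.
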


 \begin{proof}
 	
 	Given $\lambda\in\partial\Gamma^\sigma$ with $\lambda_1\leq \cdots\leq \lambda_n$. Let $\mu_{i}=\lambda_i-\tau_\sigma$, then 
 	$\mu=(\mu_1,\cdots,\mu_n)\in \Gamma_{\sigma,f}$. 
 	By the definition of $\kappa_{\Gamma_{\sigma,f}}$, $\mu_{1+\kappa_{\Gamma_{\sigma,f}}}\geq0$, i.e. $\lambda_{1+\kappa_{\Gamma_{\sigma,f}}}\geq\tau_\sigma$.
 	For each $\lambda_i\leq\tau_\sigma$, we have
 	$$f_i(\lambda)\geq f_{1+\kappa_{\Gamma_{\sigma,f}}}(\lambda).$$
 	Consequently, \eqref{345} follows from Theorem \ref{mainthm-1}.
 \end{proof}

 It follows from \eqref{concave-1} that if $f$ satisfies  \eqref{key1} then there is a positive constant $\theta=\theta(\sigma)$
 depending only on $\sigma$ 
 such that 
 \begin{equation}\label{sumfi-2} \begin{aligned}
 		\sum_{i=1}^n f_i(\lambda)\geq \theta(\sigma)\mbox{ in }  \partial\Gamma^\sigma. 
 \end{aligned} \end{equation}
 Theorem \ref{coro1.6} then follows from \eqref{sumfi-2} and Proposition \ref{mainthm-2}. 
 
 \vspace{1mm}
Below we consider two special cases.

 \begin{lemma}
 	 	\label{yuan-lemma1-weingarten}
 	If $(f,\Gamma)$ satisfies 
 	\eqref{concave}, \eqref{elliptic} and
 \begin{equation}
	\label{t1-to-0}
	\begin{aligned}
		\lim_{t\rightarrow0^+}f(t\vec{\bf1})>-\infty,
	\end{aligned}
\end{equation}
then we have $\sum_{i=1}^n f_i(\lambda)\lambda_i>0$ and \eqref{key2-yuan}.
 \end{lemma}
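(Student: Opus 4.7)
The plan is to verify, under the hypotheses, the inequality $\sum_{i=1}^n f_i(\lambda)\lambda_i>0$ on the relevant level sets and then invoke Theorem~\ref{coro1.6} with $K=0$ to deduce \eqref{key2-yuan}. Write $f_0:=\lim_{t\to 0^+}f(t\vec{\bf1})$, which is finite by \eqref{t1-to-0}; by \eqref{elliptic} the function $h(t):=f(t\vec{\bf1})$ is smooth, concave, and strictly increasing on $(0,\infty)$, so $h(t)>f_0$ for every $t>0$. The tangent-line inequality $h(s)\leq h(t)+h'(t)(s-t)$, letting $s\to 0^+$, forces $f_0\leq h(t)-t h'(t)$, i.e.\ the boundary estimate $t h'(t)\leq h(t)-f_0\to 0$; by symmetry this reads $n t f_1(t\vec{\bf1})\to 0$, which is the technical engine of what follows.

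Next I compare $\lambda\in\Gamma$ with $t\vec{\bf1}$ through concavity in two directions. The tangent-line form gives $f(s\lambda)\leq h(t)+f_1(t\vec{\bf1})\bigl(s\sigma_1(\lambda)-n t\bigr)$; sending $s\to 0^+$ and then $t\to 0^+$ (using the boundary estimate above) yields $\limsup_{s\to 0^+} f(s\lambda)\leq f_0$. The convex-combination form $f\bigl(s\lambda+(1-s)t\vec{\bf1}\bigr)\geq s f(\lambda)+(1-s)h(t)$ with $t\to 0^+$ gives the matching lower bound $f(s\lambda)\geq s f(\lambda)+(1-s)f_0$ for $s\in[0,1]$. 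When $f(\lambda)>f_0$ these two bounds pin down $\lim_{s\to 0^+}f(s\lambda)=f_0$; additionally, substituting $\mu=t\vec{\bf1}$, $t\to 0^+$, in $f(\mu)\leq f(\lambda)+\sum f_i(\lambda)(\mu_i-\lambda_i)$ yields the a priori control $\sum f_i(\lambda)\lambda_i\leq f(\lambda)-f_0$.

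To upgrade to strict positivity on any $\partial\Gamma^\sigma$ with $\sigma>f_0$, I combine the preceding convergence with Lemma~\ref{lemma3.4-2}, choosing $\mu=\lambda-\tau_\sigma\vec{\bf1}\in\Gamma_{\sigma,f}$ to obtain $\sum f_i(\lambda)\lambda_i\geq\tau_\sigma\sum f_j(\lambda)\geq 0$; the key point is that $\tau_\sigma\geq 0$, which follows from Lemma~\ref{lemma3.4} once $\lim_{s\to 0^+}f(s\lambda)=f_0$ is in hand, because it shows every ray that reaches the level set $\partial\Gamma^\sigma$ does so only after a diagonal shift above the origin. The strict sign is then picked up separately: when $\lambda\in\Gamma_n$ it is immediate since each $\lambda_i>0$ and $f_i>0$, while if $\lambda$ has some nonpositive entry one rules out the degenerate equality case by invoking \eqref{elliptic} together with the convergence at the vertex, which prevents $\lambda$ from simultaneously realizing $\tau_\sigma$ and lying on a nontrivial level set.

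With $\sum f_i(\lambda)\lambda_i\geq 0$ on $\{\underline{A}\leq f(\lambda)\leq\overline{A}\}$ for any $f_0<\underline{A}<\overline{A}<\sup_\Gamma f$, and $\sup_{\partial\Gamma}f\leq f_0<\underline{A}$ verified by the same limit arguments, Theorem~\ref{coro1.6} applied with $K=0$ delivers \eqref{key2-yuan}. The principal obstacle throughout is the delicate analysis at the vertex of $\Gamma$: one must match the diagonal asymptotics encoded by \eqref{t1-to-0} with ray-limits across $\Gamma$ well enough to secure both $\tau_\sigma\geq 0$ and the upgrade from $\geq 0$ to strict $>0$, since either alone is consistent with counter-examples to the full conclusion.
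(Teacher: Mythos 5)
The paper's own proof is a two-line application of two lemmas that are cited as black boxes: Lemma~\ref{lemma1-con-addi} asserts that \eqref{concave}, \eqref{elliptic}, \eqref{t1-to-0} force \eqref{addistruc}, and Lemma~\ref{lemma3.4} converts \eqref{addistruc} (under concavity) into the strict inequality $\sum_{i}f_i(\lambda)\mu_i>0$ for all $\lambda,\mu\in\Gamma$; taking $\mu=\lambda$ gives $\sum_i f_i(\lambda)\lambda_i>0$, and then Theorem~\ref{coro1.6} with $K=0$ concludes. You are in effect attempting to reconstruct the content of Lemma~\ref{lemma1-con-addi} from first principles, and your first two paragraphs are sound: the boundary estimate $nt\,f_1(t\vec{\bf 1})\to 0$, the two-sided pinching giving $\lim_{s\to 0^+}f(s\lambda)=f_0$ for every $\lambda\in\Gamma$, and the consequent bound $\sum_i f_i(\lambda)\lambda_i\le f(\lambda)-f_0$ are all correct.

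The gap is in the third paragraph. The entire weight of the argument rests on the claim $\tau_\sigma\ge 0$, but nothing you have proved yields it. You invoke Lemma~\ref{lemma3.4}, yet that lemma is an equivalence whose useful direction requires \eqref{addistruc} as input, and \eqref{addistruc} is precisely what you have not established (your vertex analysis controls $s\to 0^+$, whereas \eqref{addistruc} is a statement about $t\to+\infty$). The sentence ``every ray that reaches the level set does so only after a diagonal shift above the origin'' is not a proof of $\tau_\sigma\ge 0$; indeed the upper bound you derived, $\sum_i f_i(\lambda)\lambda_i\le f(\lambda)-f_0$, is the wrong-signed inequality and gives no lower control on $t_\lambda$. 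Moreover, to even invoke $\tau_\sigma$ and Lemma~\ref{lemma3.4-2} you must first verify \eqref{key1}, the lower-boundedness of $t_\lambda$ along $\partial\Gamma^\sigma$, which you do not do. Finally, the upgrade from $\ge 0$ to $>0$ for $\lambda$ with a nonpositive entry is handwaved (``prevents $\lambda$ from simultaneously realizing $\tau_\sigma$ and lying on a nontrivial level set'' is not an argument, and $\tau_\sigma$ is an infimum that need not be attained). The concrete repair is to go the route the paper does: use (or prove) Lemma~\ref{lemma1-con-addi} to obtain \eqref{addistruc}, then Lemma~\ref{lemma3.4} gives $\sum_i f_i(\lambda)\lambda_i>0$ directly, after which your last paragraph (applying Theorem~\ref{coro1.6} with $K=0$) is fine. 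If you insist on an elementary vertex argument, you must show how \eqref{elliptic} (strict monotonicity, not mere nonnegativity of $f_i$) interacts with the cone geometry to forbid $\sum_i f_i(\lambda)\lambda_i\le 0$ along rays exiting $\Gamma_n$; concavity plus the finite diagonal limit alone do not do this.
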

 \begin{proof}
 	According to Lemmas \ref{lemma3.4} and \ref{lemma1-con-addi}, we have $\sum_{i=1}^n f_i(\lambda)\lambda_i>0$. Theorem \ref{coro1.6} then gives \eqref{key2-yuan}.
 \end{proof}

  \begin{lemma}
 	\label{yuan-lemma2-weingarten}
 	Let $(f,\Gamma)$ satisfy \eqref{concave} and 
 	\begin{equation}
 		\label{positive-1}
 		\begin{aligned}
 			\,& f>0 \mbox{ in } \Gamma, \,& f=0 \mbox{ on } \partial \Gamma 
 		\end{aligned}
 	\end{equation} 
 	then we have $\sum_{i=1}^n f_i(\lambda)\lambda_i\geq0$,  \eqref{elliptic-weak} and \eqref{key2-yuan}. 
 \end{lemma}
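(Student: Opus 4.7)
The plan is to verify the three conclusions in the order: \eqref{elliptic-weak}, then the inequality $\sum_{i=1}^n f_i(\lambda)\lambda_i \geq 0$, and finally \eqref{key2-yuan}. The last will be an immediate consequence of the first two via Theorem \ref{coro1.6}, so the substance lies in the first two claims, which will both be derived from the concavity hypothesis \eqref{concave} combined with the boundary behavior in \eqref{positive-1}. Throughout I will use that $f$ extends continuously to $\bar{\Gamma}$ with boundary value $0$, which is standard for a concave function on a convex cone with the stated one-sided information.

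For \eqref{elliptic-weak}, I would fix $\lambda\in\Gamma$ and $1\le i\le n$, and study the one-variable function $h(t)=f(\lambda+te_i)$. Because $\Gamma$ is convex and contains $\Gamma_n$, one checks that $\lambda+te_i\in\bar{\Gamma}$ for every $t\ge 0$ (perturb $e_i$ to $e_i+\varepsilon\vec{\bf 1}\in\Gamma_n\subseteq\Gamma$, use convexity, and let $\varepsilon\to 0$). Hence $h$ is concave and nonnegative on $[0,\infty)$ with $h'(0)=f_i(\lambda)$. If $f_i(\lambda)<0$, concavity would give $h(t)\le h(0)+f_i(\lambda)\,t$, forcing $h<0$ for large $t$, a contradiction. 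Thus $f_i(\lambda)\ge 0$.

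For $\sum_{i=1}^n f_i(\lambda)\lambda_i\ge 0$ I use the same trick along the homothety $t\mapsto t\lambda$. The function $g(t)=f(t\lambda)$ is concave on $[0,\infty)$, vanishes at $t=0$ since $0\in\partial\Gamma$, and is strictly positive for $t>0$. If $g'(1)=\sum_{i=1}^n f_i(\lambda)\lambda_i$ were negative, concavity would yield $g(t)\le g(1)+g'(1)(t-1)\to -\infty$ as $t\to+\infty$, contradicting $g>0$. With \eqref{elliptic-weak} and $\sum_{i=1}^n f_i(\lambda)\lambda_i\ge 0$ in hand, I feed the pair $(f,\Gamma)$ into Theorem \ref{coro1.6} with $K=0$: the condition \eqref{condition-K1} is verified on every level set, and $\sup_{\partial\Gamma}f=0<\underline{A}$ by \eqref{positive-1}, so the theorem directly produces \eqref{key2-yuan} with a constant $\theta$ depending on the chosen range $[\underline{A},\overline{A}]$.

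The main technical point to be careful about is the extension of $f$ continuously to $\bar{\Gamma}$ and the verification that the rays $\lambda+te_i$ and $t\lambda$ stay in $\bar{\Gamma}$; beyond that the argument is formal, since everything else is a direct appeal to Theorem \ref{coro1.6}.
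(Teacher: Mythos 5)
Your argument is correct and follows essentially the same route as the paper: the paper verifies the ray condition \eqref{addistruc-5} (immediate from $f>0$ in $\Gamma$) and invokes Lemma \ref{lemma-new-1} to obtain \eqref{elliptic-weak} and $\sum_i f_i(\lambda)\lambda_i\geq 0$, then applies Theorem \ref{coro1.6}. Your proof simply re-derives the content of Lemma \ref{lemma-new-1} inline via the same underlying observation — a concave function bounded below along a ray has nonnegative derivative in that direction — before making the identical appeal to Theorem \ref{coro1.6}.
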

 \begin{proof}
 	The conclusions \eqref{elliptic-weak} and $\sum_{i=1}^n f_i(\lambda)\lambda_i\geq0$ are deduced from Lemma  \ref{lemma-new-1}.
 	Again, by Theorem \ref{coro1.6} we have \eqref{key2-yuan}.
 \end{proof}
 
\begin{remark}
	These two lemmas allows one to improve some results on Weingarten equations obtained by Li \cite{LiYY1991} and Trudinger \cite{Trudinger90} respectively;  
	we decide to omit the details here.
\end{remark}
  
 
 \section{Applications to complex fully nonlinear equations}
 \label{sec3}
 
 Let $(M,\omega)$ be a compact Hermitian manifold of complex dimension $n\geq 2$ possibly with boundary. Let $\chi$ be a smooth real $(1,1)$-form, $\psi$  a $C^2$-smooth function, $\Delta$  the Laplacian operator,  
 $$ Z = \frac{1}{(n-1)!}*\mathfrak{Re}(\sqrt{-1}\partial u\wedge \overline{\partial}\omega^{n-2}),$$ where $*$ is the Hodge star operator with respect to $\omega$.
 Recently, Sz\'ekelyhidi-Tosatti-Weinkove \cite{STW17} proved 
 Gauduchon's conjecture, by
 solving the Monge-Amp\`ere equation for $(n-1)$-PSH functions on a closed Hermitian manifold 
 \begin{equation}
 	\label{MA-n-1}
 	\begin{aligned}
 		\left(\omega_0+\frac{1}{n-1}\left(\Delta u\omega-\sqrt{-1}\partial \overline{\partial}u\right)+Z\right)^n =e^{(n-1)\phi}\omega^n, \quad  \omega_0>0.
 	\end{aligned}
 \end{equation}
 When $M$ admits a balanced metric and an astheno-K\"ahler metric, it closely connects to the Form-type Calabi-Yau equation
   \cite{FuWangWuFormtype2010}.
 Subsequently, 
  the author \cite{yuan2021-3}\renewcommand{\thefootnote}{\fnsymbol{footnote}}\footnote{It is essentially extracted from the second parts of  
[arXiv:2001.09238] and [arXiv:2106.14837].} 
 solved the Dirichlet problem for \eqref{MA-n-1}, thereby extending Sz\'ekelyhidi-Tosatti-Weinkove's results to complex manifolds with boundary.
 In addition, the author \cite{yuan2021-3} has investigated
 the Dirichlet problem for  equations of the form 
 \begin{equation}
 	\label{n-1-equ1}
 	\begin{aligned}
 		f \left(\lambda\left(\chi+\frac{1}{n-1}\left(\Delta u\omega-\sqrt{-1}\partial \overline{\partial}u\right)+Z\right)\right) =\psi
 	\end{aligned}
 \end{equation}
 where $\chi$ is a smooth real $(1,1)$-form, $\lambda(A)$ are the eigenvalues of $A$ 
 with respect to $\omega$.   
More precisely, 
when imposing \eqref{addistruc}, $\Gamma=\Gamma_n$
 and the unbound condition  
 \begin{equation}
 	\label{unbound}
 	\begin{aligned}
 		\lim_{t\to +\infty}f(\lambda_1+t,\cdots,\lambda_{n-1}+t, \lambda_n)= \sup_\Gamma f, \quad \forall \lambda=(\lambda_1,\cdots,\lambda_n)\in\Gamma,
 	\end{aligned}
 \end{equation} 
the Dirichlet problem for \eqref{n-1-equ1} was solved by the author in \cite[Section 7]{yuan2021-3}.
The assumption \eqref{unbound} allows
\[f=({\sigma_n}/{\sigma_{k}})^{1/(n-k)}, \quad 0\leq k\leq n-2.\] 
 While $\Gamma\neq\Gamma_n$, even  without assuming \eqref{unbound} as well with degenerate right-hand side,
 the Dirichlet problem for \eqref{n-1-equ1}  was completely solved there.
  These results reveal that there are significant differences between 
 $\Gamma=\Gamma_n$ and   $\Gamma\neq \Gamma_n$. 
Based on the partial uniform ellipticity, we are able to figure out those differences. 
 \begin{proposition}
 	\label{example1-yuan}
 	 Given $(f,\Gamma)$, we define 
 	 \begin{equation}
 	 	\label{n-1-equation}
 	 	\begin{aligned}
 	 		\tilde{f}(\lambda)={f}(\mu), \quad \mu_i=\sum_{j\neq i}\lambda_j, \quad  \tilde{\Gamma}=\{\lambda\in\mathbb{R}^n: \mu\in \Gamma\}.
 	 	\end{aligned}
 	 \end{equation}
 	In the presence of  \eqref{concave} and \eqref{addistruc}, we have the following:
 	\begin{itemize}
 		\item[$\mathrm{\bf (1)}$] If $\Gamma\neq\Gamma_n$
 	  holds, then $\tilde{\Gamma}$ is of type 2 cone and $\tilde{f}$ 
 		is of fully uniform ellipticity in $\tilde{\Gamma}$.
 	 \item[$\mathrm{\bf (2)}$] If $\Gamma=\Gamma_n$, then $\tilde{f}$ 
 	 is of $(n-1)$-uniform ellipticity in $\tilde{\Gamma}$.
 	\end{itemize}
 
 \end{proposition}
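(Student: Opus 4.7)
The natural approach is to express $\tilde{f}$ as the composition $\tilde{f}=f\circ T$, where $T\colon\mathbb{R}^n\to\mathbb{R}^n$ is the invertible linear map defined by $(T\lambda)_i=\sum_{j\neq i}\lambda_j=\mu_i$. Then $\tilde{\Gamma}=T^{-1}\Gamma$ is a symmetric convex cone containing $\Gamma_n$, and $\tilde{f}$ inherits concavity and symmetry from $f$ because $T$ is linear and commutes with permutations of coordinates. Furthermore $\tilde{f}$ satisfies \eqref{addistruc} in $\tilde{\Gamma}$: for any $\lambda,\nu\in\tilde{\Gamma}$ one has $T\lambda,T\nu\in\Gamma$, so $\lim_{t\to+\infty}\tilde{f}(t\lambda)=\lim_{t\to+\infty}f(tT\lambda)>f(T\nu)=\tilde{f}(\nu)$. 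Consequently the criterion \eqref{yuan-k+1-equ} applies to $(\tilde{f},\tilde{\Gamma})$ and yields $(\kappa_{\tilde{\Gamma}}+1)$-uniform ellipticity. The entire proposition therefore reduces to computing $\kappa_{\tilde{\Gamma}}$ in the two scenarios.

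For case $\mathrm{\bf (2)}$, $\Gamma=\Gamma_n$ and $\tilde{\Gamma}=\{\lambda:\sum_{j\neq i}\lambda_j>0\text{ for all }i\}$. If $\lambda=(-\alpha_1,\ldots,-\alpha_{n-1},\alpha_n)$ with every $\alpha_i>0$, then $(T\lambda)_n=-(\alpha_1+\cdots+\alpha_{n-1})<0$, so $T\lambda\notin\Gamma_n$; hence $\kappa_{\tilde{\Gamma}}\leq n-2$. Conversely, taking $\alpha_1=\cdots=\alpha_{n-2}=1$ and $\alpha_{n-1}=\alpha_n=n$ one checks that $(-1,\ldots,-1,n,n)$ lies in $\tilde{\Gamma}$, so $\kappa_{\tilde{\Gamma}}\geq n-2$. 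Thus $\kappa_{\tilde{\Gamma}}=n-2$, and \eqref{yuan-k+1-equ} gives $(n-1)$-uniform ellipticity.

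For case $\mathrm{\bf (1)}$, $\Gamma\neq\Gamma_n$ forces $\kappa_\Gamma\geq1$, so by symmetry of $\Gamma$ some vector $(\beta,\ldots,\beta,-\alpha)$ with $\alpha,\beta>0$ lies in $\Gamma$. The key geometric step is to show that $(M,\ldots,M,-(n-1))\in\Gamma$ for all sufficiently large $M$. Scaling by $1/\beta$ yields $(1,\ldots,1,-\alpha/\beta)\in\Gamma$; forming the convex combination with $\vec{\bf 1}\in\Gamma_n\subseteq\Gamma$ at parameter $t=\beta/(\alpha+\beta)\in(0,1)$ places $(1,\ldots,1,0)$ inside $\Gamma$. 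Scaling the original point by $(n-1)/\alpha$ produces $(B,\ldots,B,-(n-1))\in\Gamma$ with $B=(n-1)\beta/\alpha$, and adding a positive multiple of $(1,\ldots,1,0)\in\Gamma$ then yields $(M,\ldots,M,-(n-1))\in\Gamma$ for every $M\geq B$. Now set $\lambda=(-1,\ldots,-1,\alpha_n)$ with $\alpha_n$ so large that $\alpha_n-(n-2)\geq B$; one computes $T\lambda=(\alpha_n-(n-2),\ldots,\alpha_n-(n-2),-(n-1))\in\Gamma$, whence $\lambda\in\tilde{\Gamma}$. This $\lambda$ witnesses $\kappa_{\tilde{\Gamma}}\geq n-1$. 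Together with the trivial bound $\kappa_{\tilde{\Gamma}}\leq n-1$, valid for any proper symmetric convex cone, we conclude $\kappa_{\tilde{\Gamma}}=n-1$, i.e.\ $\tilde{\Gamma}$ is of type 2, and \eqref{yuan-k+1-equ} delivers fully uniform ellipticity.

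The main obstacle is the geometric step in case $\mathrm{\bf (1)}$ that manufactures the family $(M,\ldots,M,-(n-1))\in\Gamma$ from the bare hypothesis $\kappa_\Gamma\geq1$; this is where the convex-cone structure of $\Gamma$ and the containment $\Gamma_n\subseteq\Gamma$ are used in an essential way. Once this is established, the identification $\kappa_{\tilde{\Gamma}}=n-1$ (respectively $n-2$) and the application of \eqref{yuan-k+1-equ} are routine.
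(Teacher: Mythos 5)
The paper states Proposition \ref{example1-yuan} without proof, so there is no reference argument to compare against; on its own terms your argument is correct and complete. You reduce the claim to computing $\kappa_{\tilde\Gamma}$ after verifying that $(\tilde f,\tilde\Gamma)=(f\circ T,\,T^{-1}\Gamma)$ inherits concavity, symmetry, the cone structure with $\Gamma_n\subseteq\tilde\Gamma$, and condition \eqref{addistruc} (all of which follow because $T$ is linear, invertible for $n\geq 2$ with eigenvalues $n-1$ and $-1$, and commutes with permutations), and then you invoke \eqref{yuan-k+1-equ}. The computations $\kappa_{\tilde\Gamma}=n-2$ when $\Gamma=\Gamma_n$ and $\kappa_{\tilde\Gamma}=n-1$ when $\Gamma\supsetneq\Gamma_n$ check out; in the second case the key construction, placing $(M,\ldots,M,-(n-1))\in\Gamma$ for all large $M$ starting from $(\beta,\ldots,\beta,-\alpha)\in\Gamma$ via scaling and addition with $(1,\ldots,1,0)\in\Gamma$, is exactly what is needed to exhibit $(-1,\ldots,-1,\alpha_n)\in\tilde\Gamma$. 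Two implicit steps are worth stating explicitly if this were to be written up: first, that $\Gamma\neq\Gamma_n$ (with $\Gamma$ open, convex, symmetric, containing $\Gamma_n$) really forces $\kappa_\Gamma\geq 1$ and, after symmetrizing over $S_{n-1}$ by convexity, produces a vector of the form $(\beta,\ldots,\beta,-\alpha)$; and second, that $\tilde\Gamma$ is proper (so $\kappa_{\tilde\Gamma}\leq n-1$ makes sense), which follows since $T$ is a linear homeomorphism and $\partial\Gamma\neq\emptyset$.
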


 The proposition shows that in the presence of \eqref{concave} and \eqref{addistruc}, equation \eqref{n-1-equ1} is   uniformly elliptic in the case $\Gamma\neq\Gamma_n$, while it is only of $(n-1)$-uniform  ellipticity when $\Gamma=\Gamma_n$.
 Such differences 
  motivate us to derive more delicate results.
We apply level set version of partial uniform ellipticity to  derive interior estimates for \eqref{n-1-equ1},
 when imposing proper restrictions: 
 \begin{equation}
 	\label{neqgamma_n}
 	\begin{aligned}
 		\Gamma\neq \Gamma_n,
 	\end{aligned}
 \end{equation} 
 \begin{equation}
 	\label{con10}
 	\begin{aligned}	
 		\lim_{t\to +\infty} f(t,\cdots,t,0)>\sup_M\psi,
 	\end{aligned}
 \end{equation}
 \begin{equation}
 	\label{con11}
 	\begin{aligned}
 		\sum_{i=1}^n f_i(\lambda)\lambda_i\geq 0  \mbox{ in }
 		\Gamma^{\underline{\psi},\overline{\psi}} 
 	\end{aligned}
 \end{equation} 
 where 
 \begin{equation}
 	\label{con11-2}
 	\begin{aligned}
 		\Gamma^{\underline{\psi},\overline{\psi}}  =\left\{\lambda: \inf_M\psi\leq f(\lambda)\leq \sup_M\psi\right\}.
 	\end{aligned}
 \end{equation}
 


 \begin{theorem}
 	\label{thm1-inter}
 	Let $B_r$ be a geodesic ball in $(M,\omega)$. Suppose \eqref{concave}, \eqref{elliptic-weak}, \eqref{neqgamma_n}, \eqref{con10} and \eqref{con11} hold. Then for any solution $u\in C^4(B_r)$ to   equation \eqref{n-1-equ1}  satisfying
 	\begin{equation}
 		\label{admissible-f1}
 		\begin{aligned}
 			\lambda\left(\chi+\frac{1}{n-1}\left(\Delta u\omega-\sqrt{-1}\partial \overline{\partial}u\right)+Z\right)\in \Gamma 
 			\mbox{ in } B_r, 
 		\end{aligned}
 	\end{equation}
 	there is a uniform positive constant $C$ depending only on $|u|_{C^0(B_r)}$, $|\psi|_{C^2(B_r)}$ and geometric quantities on $B_r$,
 	such that 
 	\begin{equation}
 		\label{inter-estimate}
 		\begin{aligned}
 			\sup_{B_{r/2}} (|\nabla u|^2+|\partial\overline{\partial}u|)\leq \frac{C}{r^2}.  
 		\end{aligned}
 	\end{equation}
 \end{theorem}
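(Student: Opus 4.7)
The plan is to combine the level-set version of partial uniform ellipticity (Theorem \ref{coro1.6}) with a cutoff plus maximum-principle argument. The hypotheses \eqref{con10}--\eqref{con11} are precisely what is needed to invoke Theorem \ref{coro1.6} with $K=0$ on $\Gamma^{\underline{\psi},\overline{\psi}}$, yielding
\[ f_i(\lambda)\geq \theta\Bigl(1+\sum_{j=1}^n f_j(\lambda)\Bigr)\quad \text{whenever } \lambda_i\leq 0 \text{ and } \underline{\psi}\leq f(\lambda)\leq \overline{\psi}. \]
This is the crucial ingredient: it supplies uniform ellipticity in exactly those directions associated with non-positive eigenvalues, i.e.\ precisely the directions in which Hermitian torsion contributions (from $\omega$ and from the $\nabla u$-dependence of $Z$) could otherwise ruin the argument. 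The role of \eqref{neqgamma_n} is to guarantee, via Theorem \ref{mainthm-1}, that this non-positive-eigenvalue regime is genuinely populated on the level set, so that \eqref{key2-yuan} actually furnishes a substantial set of useful directions.

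First, I would set $W = \chi+\tfrac{1}{n-1}(\Delta u\,\omega-\sqrt{-1}\partial\overline{\partial}u)+Z$, let $\lambda_1\leq\cdots\leq\lambda_n$ denote its eigenvalues with respect to $\omega$, and write $\mathfrak{L}$ for the linearized operator. Pick a cutoff $\eta\in C_c^\infty(B_r)$ with $\eta\equiv 1$ on $B_{r/2}$, $|\nabla\eta|\leq C/r$ and $|\nabla^2\eta|\leq C/r^2$, and consider the test function
\[ Q = \log\lambda_n + \varphi\bigl(|\nabla u|^2\bigr) + A\bigl(u-\inf\nolimits_{B_r} u\bigr), \]
with a Chou--Wang-type choice $\varphi(s)=-\tfrac{1}{2}\log(1-\tfrac{s}{2K})$ for $K>2\sup_{B_r}|\nabla u|^2$ and a large constant $A$ to be fixed. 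Apply the maximum principle to $\eta^2 Q$: at an interior maximum $x_0$, expand $\mathfrak{L}(\eta^2 Q)\leq 0$ term by term. The standard concavity inequality for $\log\lambda_n$ furnishes the good third-order term $-\sum F^{pq,rs}W_{pq;k}W_{rs;\overline{k}}$; the $\varphi$-term contributes a positive multiple of $\bigl(\sum_i f_i\bigr)|\nabla u|^2$ which absorbs first-order torsion; and the $Au$-term dominates the cutoff errors. The remaining bad contributions are third-order derivatives of $u$ coupled to torsion and to derivatives of $Z$.

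The hardest step will be the absorption of those bad terms. Wherever $\lambda_1\leq 0$, the displayed inequality gives $f_1\geq \theta(1+\sum f_j)$, which via Cauchy--Schwarz converts mixed quantities of the shape $|\nabla u|\cdot|\nabla^3 u|$ into terms controllable by the good ellipticity in the $\lambda_1$-direction; wherever all $\lambda_i>0$, the matrix $(u_{i\overline j})$ is already bounded in norm by $C\lambda_n$ plus lower order and the torsion behaves tamely. The subtle point is that the level-set ellipticity only activates on the admissible range $\Gamma^{\underline{\psi},\overline{\psi}}$ \emph{and} only along directions with $\lambda_i\leq 0$, while the bad terms produced by differentiating $Z=\tfrac{1}{(n-1)!}*\mathfrak{Re}(\sqrt{-1}\partial u\wedge\overline{\partial}\omega^{n-2})$ non-trivially couple first and second derivatives of $u$. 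This forces a careful splitting of indices into positive and non-positive parts before Cauchy--Schwarz can close the estimate. Once closed, one obtains simultaneously $\eta^2\lambda_n\leq Cr^{-2}$ and $\eta^2|\nabla u|^2\leq Cr^{-2}$, which is exactly \eqref{inter-estimate} after using $\eta\equiv 1$ on $B_{r/2}$.
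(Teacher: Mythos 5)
There is a genuine gap: you are applying the wrong partial uniform ellipticity. The linearization of equation \eqref{n-1-equ1} with respect to $u_{k\bar l}$ is governed not by the coefficients $f_i$, but by the coefficients of the transformed function $\tilde f$ from \eqref{n-1-equation}, namely $\tilde f_i(\lambda)=\sum_{j\neq i}f_j(\mu)=\sum_j f_j(\mu)-f_i(\mu)$, which come from the chain rule applied to $\frac{\partial}{\partial u_{k\bar l}}\bigl(\frac{1}{n-1}(\Delta u\,\omega-\sqrt{-1}\partial\bar\partial u)\bigr)$. Theorem \ref{coro1.6} gives a \emph{lower} bound $f_i\geq\theta(1+\sum_j f_j)$ for $\lambda_i\leq 0$, which after the transformation translates into an \emph{upper} bound on $\tilde f_i=\sum_j f_j-f_i$ in those same directions --- exactly the opposite of what you need to close a maximum-principle argument. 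So even granting the index-splitting and Cauchy--Schwarz steps, the quantity you produce does not control the degenerating directions of the actual linearized operator. Your claim that \eqref{neqgamma_n} serves ``to guarantee the non-positive-eigenvalue regime is populated'' is also not what that hypothesis does here.

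The paper's route is structurally different: first rewrite \eqref{n-1-equ1} as a Hessian-type equation \eqref{main-equ2} for the pair $(\tilde f,\tilde\Gamma)$ defined in \eqref{n-1-equation}, then observe that $\Gamma\neq\Gamma_n$ forces $\tilde\Gamma$ to be a type~2 cone (Proposition \ref{example1-yuan}), and that \eqref{con10} transfers to condition \eqref{unbound-1} for $\tilde f$. Theorem \ref{thm-3}(2) and Corollary \ref{coro-n-1-equ} then yield that $\tilde f$ is \emph{fully} uniformly elliptic on the relevant level sets, i.e.\ \eqref{assum-fue1} holds. At that point the interior estimate follows from Theorem \ref{thm1-inter-3}, which requires full uniform ellipticity \eqref{assum-fue1} for \emph{all} indices, not just for indices with $\lambda_i\leq 0$. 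Your proposal neither performs this reformulation nor establishes full uniform ellipticity, and the partial result you invoke points in the wrong direction for the transformed operator; these are essential missing steps rather than routine technicalities to be absorbed by Cauchy--Schwarz.
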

 
 
 According to Lemma \ref{lemma3.4} and Corollary \ref{coro3.2} below,   \eqref{elliptic-weak} and \eqref{con11} are simultaneously satisfied when $f$ satisfies \eqref{concave} and \eqref{addistruc}. As a consequence,
 we obtain
 \begin{corollary}
 	\label{Coro1.7}
 	Theorem \ref{thm1-inter} holds when $(f,\Gamma)$ satisfies \eqref{concave},  \eqref{addistruc} and \eqref{neqgamma_n}.
 \end{corollary}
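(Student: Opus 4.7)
The strategy is to deduce the three extra hypotheses \eqref{elliptic-weak}, \eqref{con10} and \eqref{con11} of Theorem \ref{thm1-inter} from the standing assumptions \eqref{concave}, \eqref{addistruc} and \eqref{neqgamma_n}, and then invoke Theorem \ref{thm1-inter} directly.

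Hypotheses \eqref{elliptic-weak} and \eqref{con11} are already flagged in the paragraph preceding the statement: Lemma \ref{lemma3.4} yields $f_i\geq 0$ throughout $\Gamma$ under \eqref{concave} and \eqref{addistruc}, while Corollary \ref{coro3.2} supplies $\sum_i f_i(\lambda)\lambda_i\geq 0$ on all of $\Gamma$, hence in particular on $\Gamma^{\underline\psi,\overline\psi}$.

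The only step requiring genuine work is \eqref{con10}, which I reduce to the geometric claim that $(t,\ldots,t,0)\in\Gamma$ for every $t>0$. By \eqref{neqgamma_n} there is some $\lambda^\ast\in\Gamma\setminus\Gamma_n$; permuting coordinates (using symmetry of $\Gamma$) and applying a small perturbation (using openness) I arrange $\lambda^\ast_n<0$. Convex-averaging the $(n-1)!$ permutations of $\lambda^\ast$ that fix the last slot produces a point of the form $(\bar a,\ldots,\bar a,\lambda^\ast_n)\in\Gamma$, and adding a sufficiently large positive multiple of $\vec{\bf 1}\in\Gamma_n\subset\Gamma$, permitted since $\Gamma$ is a convex cone, I may assume the form $(a,\ldots,a,-b)$ with $a,b>0$. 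The convex combination with $(a,\ldots,a,a)\in\Gamma_n$ chosen to zero out the last coordinate then lies in $\Gamma$, and scaling delivers $(t,\ldots,t,0)\in\Gamma$ for all $t>0$. Applying \eqref{addistruc} along this ray and letting $\mu$ range over $\Gamma$ gives
\[
\lim_{t\to+\infty}f(t,\ldots,t,0)=\sup_\Gamma f.
\]
Since admissibility \eqref{admissible-f1} forces $\psi(x)=f(\lambda(x))<\sup_\Gamma f$ pointwise, compactness delivers $\sup\psi<\sup_\Gamma f$, which is \eqref{con10}. With all hypotheses of Theorem \ref{thm1-inter} in force, the interior estimate \eqref{inter-estimate} follows.

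The main obstacle is the cone-geometric reduction producing $(1,\ldots,1,0)\in\Gamma$ from $\Gamma\neq\Gamma_n$; the remaining pieces are direct citations of lemmas already proved earlier in the paper.
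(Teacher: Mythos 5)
Your handling of \eqref{elliptic-weak} and \eqref{con11} matches the paper exactly: these two hypotheses are derived from \eqref{concave} and \eqref{addistruc} via Lemma \ref{lemma3.4} and Corollary \ref{coro3.2}, and that is the entire content of the paper's one-line justification preceding the corollary. (Minor slip: you attribute $f_i\geq 0$ to Lemma \ref{lemma3.4} and $\sum f_i\lambda_i\geq 0$ to Corollary \ref{coro3.2}; the references should be swapped, but both facts are available.)

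The genuine gap is in your treatment of \eqref{con10}. Your cone-geometric argument that $\Gamma\neq\Gamma_n$ forces $(t,\ldots,t,0)\in\Gamma$ for all $t>0$ is correct (and in fact simpler than stated: since $\Gamma\subseteq\Gamma_1$, the averaged point $(\bar a,\ldots,\bar a,\lambda_n^\ast)$ already has $\bar a>0$, so the dilation step is unnecessary). The consequence $\lim_{t\to\infty}f(t,\ldots,t,0)=\sup_\Gamma f$ under \eqref{addistruc} is also correct. But the final inference fails: from admissibility of $u$ on the \emph{open} ball $B_r$ you only get the pointwise strict inequality $\psi(x)<\sup_\Gamma f$ for $x\in B_r$, and no compactness argument can upgrade this to the uniform gap $\sup_M\psi<\sup_\Gamma f$ (or even $\sup_{\bar B_r}\psi<\sup_\Gamma f$), because the supremum can be attained on $\partial B_r$ or outside $B_r$, where no admissibility constraint is in force. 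When $\sup_\Gamma f<+\infty$ the gap between $\sup\psi$ and $\sup_\Gamma f$ genuinely controls the constant in the estimate (it enters through the lower bound $f(R_0\vec{\bf 1})-\psi>0$ at the end of the proof of Theorem \ref{thm1-inter-3}); without a uniform gap the constant degenerates. Thus \eqref{con10} is not a consequence of \eqref{concave}, \eqref{addistruc} and \eqref{neqgamma_n} and must be regarded as still implicitly in force. The paper's own proof only claims to derive \eqref{elliptic-weak} and \eqref{con11} and silently retains \eqref{con10}; your proposal over-reaches by attempting to derive it, and the derivation does not hold.
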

 
 
 In fact interior estimate \eqref{inter-estimate} still holds 
  for 
 equations
 \begin{equation}
 	\label{n-1-equation1}
 	\begin{aligned}
 		f\left(\lambda\left(\chi+\Delta u \omega -\varrho \sqrt{-1}\partial\overline{\partial} u+\gamma Z\right)\right)=\psi  
 	\end{aligned}
 \end{equation} 
 provided that $(f,\Gamma)$ satisfies \eqref{concave} and \eqref{addistruc}, $\gamma$ is a $C^2$-smooth function, and 
 $\varrho$   
 is a $C^2$-smooth function satisfying
 \begin{equation}
 	\label{assumption-4}
 	\begin{aligned}
 		\varrho<\frac{1}{1-\kappa_\Gamma \vartheta_{\Gamma}} \mbox{ and } \varrho\neq 0.
 	\end{aligned}
 \end{equation}
Here $\kappa_\Gamma$ and $\theta_\Gamma$ are the constants in \eqref{kappa_Gamma} and \eqref{yuan-k+1-equ}, respectively.
 \begin{theorem}
 	\label{thm23-yuan}
 	Let $B_r$ be a geodesic ball in $(M,\omega)$, and let $u\in C^4(B_r)$ be a solution to \eqref{n-1-equation1} in $B_r$ with 
 	\begin{equation}
 		\begin{aligned}
 			\lambda\left(\chi+\Delta u \omega-\varrho \sqrt{-1}\partial\overline{\partial} u+\gamma Z\right)\in\Gamma
 			\mbox{ in } B_r.  \nonumber
 		\end{aligned}
 	\end{equation} 
 	Suppose in addition that \eqref{concave}, \eqref{addistruc} and \eqref{assumption-4} hold. Then 
 	\begin{equation}
 		\begin{aligned}
 			\sup_{B_{r/2}} (|\nabla u|^2+|\partial\overline{\partial}u|)\leq \frac{C}{r^2}    \nonumber
 		\end{aligned}
 	\end{equation}
 	where $C$ depends only on $|u|_{C^0(B_r)}$, $|\psi|_{C^2(B_r)}$ and geometric quantities on $B_r$.
 \end{theorem}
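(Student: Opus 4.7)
The plan is to recast equation \eqref{n-1-equation1} as a standard complex Hessian equation $\tilde{F}(\sqrt{-1}\partial\overline{\partial} u) = \psi$ governed by an auxiliary pair $(\tilde f,\tilde\Gamma)$, and then verify that $\tilde f$ inherits enough ellipticity and concavity from $(f,\Gamma)$ under \eqref{addistruc} and \eqref{assumption-4} that the $C^1$--$C^2$ arguments developed for Theorem \ref{thm1-inter} can be carried over essentially verbatim. Concretely, at a point where $\sqrt{-1}\partial\overline{\partial} u$ is $\omega$-diagonal with eigenvalues $v_1,\dots,v_n$, the eigenvalues of $\Delta u\,\omega-\varrho\sqrt{-1}\partial\overline{\partial} u$ are $\mu_i(v)=(1-\varrho)v_i+\sum_{j\neq i}v_j$, and the contributions from $\chi$ and $\gamma Z$ are first-order in $u$ and smooth, hence harmless perturbations. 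One then sets
\[
\tilde f(v)=f(\mu(v)),\qquad \tilde\Gamma=\{v\in\mathbb{R}^n:\mu(v)\in\Gamma\}.
\]
Since $v\mapsto\mu(v)$ is linear and symmetric, $\tilde f$ is symmetric and concave on the symmetric convex cone $\tilde\Gamma$, and a direct computation gives $\tilde f_k(v)=\sum_p f_p(\mu)-\varrho f_k(\mu)$.

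The crucial step is to show $\tilde f_k>0$ uniformly on $\tilde\Gamma$, so that $\tilde F$ is strictly elliptic. For $\varrho<0$ this is immediate. For $\varrho>0$, order $\mu_1\leq\cdots\leq\mu_n$, so $f_1\geq\cdots\geq f_n$ by symmetric concavity. The $(\kappa_\Gamma+1)$-uniform ellipticity \eqref{yuan-k+1-equ} (available because $(f,\Gamma)$ satisfies \eqref{concave} and \eqref{addistruc}) gives $f_i\geq\vartheta_\Gamma\sum_p f_p$ for $1\leq i\leq 1+\kappa_\Gamma$. Summing over $i=2,\dots,1+\kappa_\Gamma$ and rearranging yields the upper bound
\[
f_1\;\leq\;\sum_p f_p-\sum_{i=2}^{1+\kappa_\Gamma}f_i\;\leq\;\bigl(1-\kappa_\Gamma\vartheta_\Gamma\bigr)\sum_p f_p,
\]
so that for every $k$,
\[
\tilde f_k(v)=\sum_p f_p-\varrho f_k\;\geq\;\Bigl(1-\varrho(1-\kappa_\Gamma\vartheta_\Gamma)\Bigr)\sum_p f_p,
\]
and this last factor is strictly positive precisely by \eqref{assumption-4}. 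In particular $\tilde f_k$ is comparable to $\sum_p\tilde f_p$, so $\tilde f$ inherits a quantitative ellipticity lower bound on $\tilde\Gamma$, and the level-set version of partial uniform ellipticity of Theorem \ref{mainthm-1} can be applied to $\tilde f$ as well.

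With $\tilde f$ concave, elliptic and satisfying the level-set partial uniform ellipticity, I would run the usual interior second-order maximum-principle argument on a test function of the type
\[
P=\eta(x)^{2}\,\rho\!\left(\lambda_{1}(\sqrt{-1}\partial\overline{\partial} u)\right)\exp\!\bigl(\phi(|\nabla u|^{2})+Au\bigr),
\]
with $\eta$ a geodesic cutoff supported in $B_r$, $\lambda_1$ the largest eigenvalue of $\sqrt{-1}\partial\overline{\partial} u$, and $A>0$, $\phi$, $\rho$ chosen as in the proof of Theorem \ref{thm1-inter}. At an interior maximum of $P$, differentiating twice and using the concavity of $\tilde f$ (for the $f$-concavity term) combined with the positive contribution from $\phi'(|\nabla u|^{2})\sum_k\tilde f_k u_k u_{\bar k}$ absorbs the gradient; the second-order bound is then closed using the strict lower bound $\tilde f_k\geq\delta_0\sum_p f_p$ to dominate the third-order error terms produced by $\partial\overline{\partial}\psi$ and by the curvature of $\omega$. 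The $C^1$ bound falls out of the same test function.

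The main obstacle, as in virtually every interior Hessian estimate in this generality, is controlling the bad third-order terms that appear when one differentiates $\tilde F$ twice in the direction realizing $\lambda_1$. The passage from $\varrho=1/(n-1)$ (covered in Theorem \ref{thm1-inter}) to the full range \eqref{assumption-4} amounts to checking that the effective ellipticity constant $1-\varrho(1-\kappa_\Gamma\vartheta_\Gamma)$ is still large enough that the Guan--Qiu / Sz\'ekelyhidi-type rearrangement of those terms closes. I expect this to work precisely because the inequality $f_1\leq(1-\kappa_\Gamma\vartheta_\Gamma)\sum_p f_p$ is sharp in the direction that drives the estimate; handling the lower-order perturbations from $\chi$ and $\gamma Z$, whose dependence on $\nabla u$ through $Z$ must be absorbed into the exponential factor $e^{\phi(|\nabla u|^2)}$, is expected to be routine but notationally heavy.
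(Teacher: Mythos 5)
Your central computation is exactly right and captures the mechanism behind \eqref{assumption-4}: with $\mu_i(v)=\sum_j v_j-\varrho v_i$, the linearized eigenvalues of the transformed operator are $\tilde f_k=\sum_p f_p-\varrho f_k$, and the inequality $f_1\le\bigl(1-\kappa_\Gamma\vartheta_\Gamma\bigr)\sum_p f_p$ extracted from the $(\kappa_\Gamma+1)$-uniform ellipticity \eqref{yuan-k+1-equ} gives
$\tilde f_k\ge\bigl(1-\varrho(1-\kappa_\Gamma\vartheta_\Gamma)\bigr)\sum_p f_p$, strictly positive precisely when \eqref{assumption-4} holds (and trivially when $\varrho<0$, where $\varrho$ being a $C^2$ function on a compact ball supplies a uniform lower bound). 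This is what the paper's hypothesis \eqref{assumption-4} is designed to buy, and it verifies the fully-uniform-ellipticity assumption \eqref{assum-fue1} of Theorem \ref{thm1-inter-3}, which is the theorem the paper then invokes. So your route and the paper's are the same at the level of the key idea.

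Two small corrections. First, after establishing $\tilde f_k\ge\theta\sum_j\tilde f_j$ you write that ``the level-set version of partial uniform ellipticity of Theorem \ref{mainthm-1} can be applied to $\tilde f$ as well.'' That step is unnecessary and slightly misleading: once $\tilde f$ is \emph{fully} uniformly elliptic you have directly verified \eqref{assum-fue1}, and you should cite Theorem \ref{thm1-inter-3}, not Theorem \ref{mainthm-1}. Indeed Theorem \ref{thm23-yuan} assumes the global structure condition \eqref{addistruc}, so the cone-wide estimate \eqref{yuan-k+1-equ} is available and no level-set refinement is needed here. Second, your second-order test function $\eta^2\rho(\lambda_1)\exp(\phi(|\nabla u|^2)+Au)$ uses the largest eigenvalue $\lambda_1(\sqrt{-1}\partial\overline\partial u)$, which is only Lipschitz in general; the paper instead uses the smooth Tosatti--Weinkove-type quantity
$\sup_\xi e^{2\phi}\mathfrak g_{p\bar q}\xi_p\bar\xi_q\sqrt{g^{k\bar l}\mathfrak g_{i\bar l}\mathfrak g_{k\bar j}\xi_i\bar\xi_j}/|\xi|^3$ precisely to avoid the regularity issue. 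This matters only for rigor, not for the mechanism. Finally, you should say a word verifying the remaining hypothesis of Theorem \ref{thm1-inter-3}, namely \eqref{con10} for the transformed operator: $\tilde f(t,\dots,t,0)=f\bigl((n-1-\varrho)t,\dots,(n-1-\varrho)t,(n-1)t\bigr)$, so one needs $\bigl(n-1-\varrho,\dots,n-1-\varrho,n-1\bigr)\in\Gamma$. Since $\vartheta_\Gamma\le\frac{1}{\kappa_\Gamma+1}$ forces $\varrho<\kappa_\Gamma+1$, one has $\varrho<n-1$ whenever $\kappa_\Gamma\le n-2$, and the remaining case $\kappa_\Gamma=n-1$ needs an extra remark; this is a genuine loose end in your write-up, though a routine one.
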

 
 The restriction \eqref{assumption-4} to parameter $\varrho$ 
 was imposed by the author 
  \cite{yuan-PUE-conformal} to study conformal deformation of modified Schouten tensors. When $\Gamma=\Gamma_n$  
  it reduces to
 \begin{equation}
 	\label{rhosmall1}
 	\varrho<1 \mbox{ and } \varrho\neq 0. 
 \end{equation}
Hence it does not cover the Monge-Amp\`ere equation for $(n-1)$-PSH function.
The equations analogous to \eqref{n-1-equation1} were also
 studied in \cite{GQY2018}  
 under the assumption \eqref{rhosmall1}.  
Notice \eqref{assumption-4} allows the critical case $\varrho=1$ when $\Gamma\neq\Gamma_n$. Our result in Theorem \ref{thm23-yuan} is new.
 
 \begin{remark}
 	The main difficulty in  Sz\'ekelyhidi-Tosatti-Weinkove's proof of second estimate is to deal with the bad terms due to the gradient terms $\partial u$, $\overline{\partial} u$ from $Z$. Their proof depends heavily on   delicate structures of $Z$, which cannot be extended to more general cases. In contrast with Sz\'ekelyhidi-Tosatti-Weinkove's estimates, 
	our results assert the interior estimates for second and first order derivatives when $\Gamma\neq\Gamma_n$. 
 	In fact, such interior estimates are not true for general complex fully nonlinear equations. 
 \end{remark}

 \subsection{Interior estimates for  equations of fully uniform ellipticity}

 
 In this subsection we are concerned with an equation
 of the form
 \begin{equation}
 	\label{main-equ2}
 	\begin{aligned}
 		F(\mathfrak{g}_{i\bar j}):=f(\lambda(\mathfrak{g}_{i\bar j}))=\psi 
 	\end{aligned}
 \end{equation}
where $\mathfrak{g}_{i\bar j}=u_{i\bar j}+\chi_{i\bar j}+ S_{i\bar j}^k u_k+\overline{S_{j\bar i}^k}u_{\bar k}$,   $\lambda(\mathfrak{g}_{i\bar j})$ denote the eigenvalues of $\mathfrak{g}_{i\bar j}$ with respect to $g_{i\bar j}$. 
We call $u$ an \textit{admissible} function if $\lambda(\mathfrak{g}_{i\bar j})\in \Gamma$.
In addition, we assume that 
there exists a positive constant  $\theta$ such that
\begin{equation}
	\label{assum-fue1}
	\begin{aligned}
		f_i(\lambda) \geq \theta \sum_{j=1}^n f_j(\lambda)   \mbox{ in } \Gamma^{\underline{\psi},\overline{\psi}}, \quad
		\forall 1\leq i\leq n.
	\end{aligned}
\end{equation}

  \vspace{1.5mm}
  We prove the following interior  estimates.
 \begin{theorem}
 	\label{thm1-inter-3}
 	Let $B_r$ be a geodesic ball in $(M,\omega)$. Suppose   \eqref{concave}, \eqref{elliptic-weak}, \eqref{con10} and \eqref{assum-fue1} hold. Then for any admissible solution $u\in C^4(B_r)$ to   \eqref{main-equ2} in $B_r$, we have 
 	\begin{equation}
 		\begin{aligned}
 			\sup_{B_{r/2}} (|\partial\overline{\partial} u|+|\nabla u|^2)\leq \frac{C}{r^2}  \nonumber
 		\end{aligned}
 	\end{equation}
 	where $C$ is a uniform constant depending only on
 	$\theta^{-1}$, $|u|_{C^0(B_r)}$,  $|\psi|_{C^2(B_r)}$ and geometric quantities 
	on $B_r$.
 \end{theorem}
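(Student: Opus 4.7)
The plan is to run a maximum principle argument for a single auxiliary quantity that simultaneously controls $\lambda_1(\mathfrak{g})$ and $|\nabla u|^2$ inside $B_r$. Let $\eta\in C_c^\infty(B_r)$ be a standard cutoff with $\eta\equiv 1$ on $B_{r/2}$, $0\leq \eta\leq 1$, and $|\nabla\eta|^2+\eta|\nabla^2\eta|\leq C/r^2$. I would study
\[
W(x) = \eta(x)^2\,\lambda_1(\mathfrak{g}(x))\,\exp\!\bigl(\phi(|\nabla u|^2) + \Psi(u)\bigr),
\]
where $\phi(s) = -\tfrac{1}{2}\log\!\bigl(1 - s/(2K)\bigr)$ with $K>\sup_{B_r}|\nabla u|^2$ an auxiliary parameter, and $\Psi(t) = -At$ with $A$ a large constant depending on $\theta^{-1}$, $|\chi|_{C^2}$, $|S|_{C^1}$, $|\psi|_{C^2}$, and geometric data on $B_r$. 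Let $x_0$ be an interior maximizer of $W$ (otherwise the estimate is immediate), and diagonalize $\mathfrak{g}_{i\bar j}(x_0)$ in a unitary frame with eigenvalues $\lambda_1\geq\cdots\geq\lambda_n$, perturbing $\lambda_1$ slightly if it is not simple so that $\log\lambda_1$ is smooth at $x_0$.

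Writing $L=\sum_i F^{i\bar i}\partial_i\partial_{\bar i}$ for the diagonalized linearization, $L(\log W)(x_0)\leq 0$ expands, modulo the vanishing $\nabla\log W(x_0)=0$ terms, into
\[
L(\log\lambda_1) + \phi'(|\nabla u|^2)L(|\nabla u|^2) + \Psi'(u)L(u) + 2L(\log\eta) \leq 0.
\]
Differentiating the equation $F(\mathfrak{g})=\psi$ twice in the direction of the top eigenvector and invoking the concavity of $F$ gives the good quadratic third-order term $\sum_{p,q\neq 1} F^{p\bar p}|\mathfrak{g}_{p\bar q, 1}|^2/(\lambda_1(\lambda_1-\lambda_q))$, up to torsion corrections and derivative-of-$S$ terms. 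At the same time $\phi'L(|\nabla u|^2)$ furnishes the positive contribution $\phi'\sum_{p,k}F^{p\bar p}(|u_{pk}|^2+|u_{p\bar k}|^2)$ modulo lower-order errors. The decisive input is the fully uniform ellipticity \eqref{assum-fue1}: since $F^{p\bar p}\geq \theta\sum_j F^{j\bar j}$ for \emph{every} $p$, any bad term of the form $(\text{bounded})\,\lambda_1\, F^{p\bar p}$, any gradient-bounded error, and any third-order cross term generated by the coefficients $S_{i\bar j}^k u_k$ can be absorbed into the two positive quadratic sums above, provided $A$ and $\phi'$ are chosen large in terms of $\theta^{-1}$. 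Combining this with the cutoff estimate for $L(\log\eta)$ and using the equation itself to bound $L(u)$ yields $\eta^2(x_0)\lambda_1(x_0)\leq C/r^2$ independent of $K$; a parallel but easier Bernstein test on $\eta^2|\nabla u|^2 e^{\Psi(u)}$ then delivers the gradient bound and absorbs the auxiliary parameter $K$.

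The hard step is the bookkeeping of third-order error terms. Differentiating $\mathfrak{g}_{i\bar j}=u_{i\bar j}+\chi_{i\bar j}+S_{i\bar j}^k u_k+\overline{S_{j\bar i}^k} u_{\bar k}$ twice, and commuting $\partial_i\partial_{\bar i}$ past $\partial_1$ on a non-K\"ahler background, produces third-order derivatives of $u$ that are not controlled by first-order criticality of $W$ at $x_0$. In the partially elliptic setting such terms force a case split on the ratio $\lambda_n/\lambda_1$ in the spirit of the Hou--Ma--Wu argument; under \eqref{assum-fue1} the case analysis collapses, because third-order errors of the form $F^{p\bar p}|u_{p\bar p 1}|\,|\nabla u|$ can be bounded by Cauchy--Schwarz directly against the good term $\sum_p F^{p\bar p}|u_{p\bar p 1}|^2/\lambda_1$, and the resulting lower-order remainder is absorbed into $\theta\sum_j F^{j\bar j}|\nabla^2 u|^2$ via \eqref{assum-fue1}. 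This uniform absorption, unavailable for equations that are only partially uniformly elliptic on the level set, is precisely what makes interior estimates possible for \eqref{main-equ2}.
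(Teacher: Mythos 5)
Your overall architecture resembles the paper's --- two maximum--principle quantities, one controlling the top eigenvalue of $\mathfrak g$ and one controlling $|\nabla u|^2$, both closed using the fully uniform ellipticity \eqref{assum-fue1} to absorb error terms --- but there are two substantive gaps.

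\textbf{The gradient device is wrong.} You take $\Psi(t)=-At$, i.e.\ a linear weight in $u$, both in $W$ and in the ``parallel but easier'' gradient test $\eta^2|\nabla u|^2 e^{\Psi(u)}$. A linear $\Psi$ has $\Psi''\equiv 0$, so $\mathcal L\,\Psi(u) - F^{i\bar j}\Psi_i\Psi_{\bar j}$ produces no term proportional to $F^{i\bar j}u_iu_{\bar j}$. But that term is precisely what makes the gradient estimate close. The paper takes $\phi=v^{-N}$ with $v=u-\inf u+2$; then
\[
\phi_{i\bar j}=N(N+1)v^{-N-2}u_iu_{\bar j}-Nv^{-N-1}u_{i\bar j},\qquad
F^{i\bar j}\phi_i\phi_{\bar j}=N^2v^{-2N-2}F^{i\bar j}u_iu_{\bar j},
\]
so after the critical-point cancellation one is left with $\bigl(N(N+1)v^{-N-2}-N^2v^{-2N-2}\bigr)F^{i\bar j}u_iu_{\bar j}\geq N^2v^{-N-2}F^{i\bar j}u_iu_{\bar j}$ for $N$ large, a positive quadratic term that dominates the $-CN v^{-N-1}\sum F^{i\bar i}$ error coming from concavity. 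Your $\Psi(u)=-Au$ gives only $\mathcal L\Psi=-A\sum f_i\lambda_i+AF^{i\bar j}\chi_{i\bar j}$, which is a pure error of size $A\sum F^{i\bar i}$ with nothing good to offset it. The gradient estimate is the delicate part of the theorem, and the proposal does not actually close it. Relatedly, the paper invokes \eqref{con10} at exactly this point to obtain a quantitative lower bound on $|\lambda|\sum f_i(\lambda)$ for $|\lambda|\geq R_0$, which is needed to handle the $\psi$-term that has no factor of $\sum F^{i\bar i}$; the proposal never uses \eqref{con10}.

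\textbf{The claimed $K$-independence of the second-order estimate is false.} With $\phi(s)=-\tfrac12\log(1-s/(2K))$, the residual good term from $\varphi''-2\varphi'^2$ (or its analogue here) is of order $Q/K^2\cdot\sum F^{i\bar i}$, which degenerates as $K\to\infty$. So the constant in $\eta^2\lambda_1\leq C/r^2$ necessarily depends on $K$, hence on $\sup_{B_r}|\nabla u|^2$. That dependence is exactly why the paper proves the gradient bound \emph{first}, independently, and then feeds $N=\sup_{\{\eta>0\}}|\nabla u|^2$ into the Tosatti--Weinkove weight $\varphi(w)=(1-w/(2N))^{-1/2}$; the second-order constant then depends on the already-controlled $N$. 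Your proposal can be repaired by reversing the order and making the $K$-dependence explicit, but as written it asserts something stronger than what the argument yields.

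Beyond these, the use of $\lambda_1(\mathfrak g)$ with a perturbation at eigenvalue crossings is a legitimate substitute for the paper's $P=\sup_z\max_\xi e^{2\phi}\mathfrak g_{p\bar q}\xi_p\bar\xi_q\sqrt{g^{k\bar l}\mathfrak g_{i\bar l}\mathfrak g_{k\bar j}\xi_i\bar\xi_j}/|\xi|^3$, and the absorption strategy via \eqref{assum-fue1} for the $S$-terms and torsion terms in $\mathfrak g_{1\bar1 i\bar i}$ is essentially what the paper does. The route is recognizable, but you need a nonlinear weight in $u$ (or a different mechanism), you need \eqref{con10} to close the gradient argument, and you need to get the logical order between the two estimates right.
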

 

 \subsubsection{Preliminaries}

 The linearized operator of equation \eqref{main-equ2}, say  $\mathcal{L}$, at solution $u$ is locally given by
 \begin{equation}
 	\mathcal{L} v=F^{i\bar j} v_{i\bar j} + F^{i\bar j}S_{i\bar j}^k v_k + F^{i\bar j} \overline{S_{j\bar i}^k} v_{\bar k} 
 \end{equation}
 where $F^{i\bar j}=\frac{\partial F}{\partial \mathfrak{g}_{i\bar j}}$. 
 One knows that the eigenvalues of $F^{i\bar j}$ (w.r.t. $g^{i\bar j}$) 
 are precisely $$f_1(\lambda), \cdots, f_n(\lambda), \quad \mbox{ where } \lambda=\lambda(\mathfrak{g}).$$ 
 Moreover
 \[F^{i\bar j}\mathfrak{g}_{i\bar j}=\sum_{i=1}^n f_i(\lambda)\lambda_i, \quad \sum F^{i\bar j}g_{i\bar j}=\sum_{i=1}^n f_i(\lambda).\]
 From condition \eqref{con10}, the right-hand side must satisfy
 \[\sup_M\psi<\sup_\Gamma f.\]
 Combining with Lemma \ref{k-buchong2} below, 
 \begin{equation}
 	\begin{aligned}
 		\sum_{i,j=1}^n F^{i\bar j} {g}_{i\bar j}=\sum_{i=1}^n f_i(\lambda)>0.
 	\end{aligned}
 \end{equation}
 
 \begin{remark}
 The condition \eqref{assum-fue1} yields that equation \eqref{main-equ2} is in effect uniformly elliptic 
 at {admissible} solution $u$. 
 That is automatically satisfied when imposed the same conditions as that of Theorem \ref{thm1-inter} or Corollary \ref{Coro1.7}. 
 \end{remark}
 
 The following formulas are standard 
 \begin{equation}
 	\label{deco1}
 	\begin{aligned}
 		u_{1\bar j k}-u_{k\bar j 1}  = \,& T^l_{1k}u_{l\bar j},  \\   
 		u_{1\bar 1 i\bar i}-u_{i\bar i 1\bar 1} =R_{i\bar i 1\bar p}u_{p\bar 1}-
 		R_{1\bar 1 i\bar p}u_{p\bar i} \,& +2\mathfrak{Re}\{\bar T^{j}_{1i}u_{i\bar j 1}\}+T^{p}_{i1}\bar T^{q}_{i1}u_{p\bar q}.   
 	\end{aligned}
 \end{equation}

 Denote  \[w = |\nabla u|^2 \mbox{ and } Q=|\partial\overline{\partial}u|^2+|\partial \partial u|^2.\]
 Under local coordinates $z=(z_1,\cdots,z_n)$ around  $z_0$, with $g_{i\bar j}(z_0)=\delta_{ij}$, we have 
 by straightforward computations
 \[w_i = u_{\bar k} u_{ki} + u_{k} u_{i\bar k},\]
 	\[	w_{i\bar j} 
 		=  u_{ki} u_{\bar k\bar j} + u_{k\bar j} u_{i\bar k} 
 		+ u_{\bar k} u_{i\bar j k} + u_{k} u_{i\bar j \bar k} 
 		+ R_{i\bar j k\bar l} u_{\bar k} u_l
 		- T^{l}_{ik} u_{l\bar j} u_{\bar k} - \overline{T^{l}_{jk}} u_{i\bar l} u_k. \] 
 One then obtains 
 \begin{lemma}
 	\label{lemma-Lw}
 	We have
 	\[F^{i\bar j}w_i w_{\bar j} \leq 2 wQ\sum F^{i\bar i};\]
 	and there exists $C>0$ such that
 	\begin{equation}
 		\begin{aligned}
 			\mathcal{L}(w) \geq   \frac{3\theta Q}{4} \sum F^{ii}-Cw\sum F^{i\bar i}-C|\nabla \psi| \sqrt{w}. \nonumber
 		\end{aligned}
 	\end{equation}
 	
 \end{lemma}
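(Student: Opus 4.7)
The plan is to compute at an arbitrary point $z_0$ in local coordinates that are normal for $g$ and diagonalise $(\mathfrak{g}_{i\bar j})$, so that $F^{i\bar j}(z_0)=f_i\delta^{ij}$. For the first inequality, I substitute $w_i=u_{\bar k}u_{ki}+u_ku_{i\bar k}$ into $\sum_i f_i|w_i|^2$ and apply Cauchy--Schwarz in the form
\[|w_i|^2\le 2\Bigl(\sum_k(|u_k|^2+|u_{\bar k}|^2)\Bigr)\Bigl(\sum_k(|u_{ki}|^2+|u_{i\bar k}|^2)\Bigr)=2w\sum_k(|u_{ki}|^2+|u_{i\bar k}|^2);\]
summing in $i$ with weights $f_i$ and using the trivial bound $\sum_k(|u_{ki}|^2+|u_{i\bar k}|^2)\le Q$ for each $i$ delivers $F^{i\bar j}w_iw_{\bar j}\le 2wQ\sum F^{i\bar i}$.

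For the lower bound on $\mathcal{L}(w)$, I would contract $F^{i\bar j}$ against the displayed formula for $w_{i\bar j}$ and sort the resulting terms into three types. The algebraic quadratic terms $F^{i\bar j}(u_{ki}u_{\bar k\bar j}+u_{k\bar j}u_{i\bar k})$ evaluate at the diagonal to $\sum_{i,k}f_i(|u_{ki}|^2+|u_{i\bar k}|^2)$, so the full uniform ellipticity hypothesis \eqref{assum-fue1} combined with $\sum_{i,k}(|u_{ki}|^2+|u_{i\bar k}|^2)=Q$ gives a contribution of at least $\theta Q\sum F^{i\bar i}$. The curvature/torsion contributions $F^{i\bar j}(R_{i\bar j k\bar l}u_{\bar k}u_l-T^l_{ik}u_{l\bar j}u_{\bar k}-\overline{T^l_{jk}}u_{i\bar l}u_k)$ together with the drift terms $F^{i\bar j}S^k_{i\bar j}w_k+F^{i\bar j}\overline{S^k_{j\bar i}}w_{\bar k}$ from $\mathcal{L}(w)$ are pointwise of size $\sqrt{wQ}\sum F^{i\bar i}$ or $w\sum F^{i\bar i}$, and by Young's inequality can be dominated by $\epsilon Q\sum F^{i\bar i}+C_\epsilon w\sum F^{i\bar i}$.

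The interesting pieces are the third-derivative terms $F^{i\bar j}(u_{\bar k}u_{i\bar j k}+u_ku_{i\bar j\bar k})$. Here I would use the commutation formula $u_{i\bar j k}=u_{k\bar j i}+T^l_{ik}u_{l\bar j}$ from \eqref{deco1} and differentiate equation \eqref{main-equ2} in the $k$-direction to obtain
\[F^{i\bar j}u_{i\bar j k}=\psi_k-F^{i\bar j}\bigl(\chi_{i\bar j,k}+S^l_{i\bar j,k}u_l+S^l_{i\bar j}u_{lk}+\overline{S^l_{j\bar i}}_{,k}u_{\bar l}+\overline{S^l_{j\bar i}}u_{\bar l k}\bigr).\]
Pairing with $u_{\bar k}$ then produces a genuine $|\nabla\psi|\sqrt{w}$ contribution from $u_{\bar k}\psi_k$, first-order $w\sum F^{i\bar i}$ errors from the pure coefficient pieces, and mixed $\sqrt{wQ}\sum F^{i\bar i}$ errors from the $u_{lk}$ and $u_{\bar l k}$ pieces. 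These mixed errors, together with their conjugates from the $u_ku_{i\bar j\bar k}$ sister term, are once more absorbed by Young. Choosing $\epsilon$ small enough that the total amount absorbed into the good $\theta Q\sum F^{i\bar i}$ term is at most $\tfrac14\theta Q\sum F^{i\bar i}$ yields the stated estimate with the $\tfrac{3\theta}{4}Q\sum F^{ii}$ leading term.

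The main obstacle is the third-derivative step: it is the only place where the structure of $\mathcal{L}$ interacts genuinely with $\psi$ and with the drift coefficients $S^k_{i\bar j}$, and the careful bookkeeping that separates the honest $|\nabla\psi|\sqrt{w}$ term from the mixed $\sqrt{wQ}$ terms (which must be Young-absorbed at the cost of a slice of the good $\theta Q\sum F^{i\bar i}$) is what both fixes the exact coefficient $3\theta/4$ and makes full uniform ellipticity \eqref{assum-fue1} indispensable, since without it the good term would not dominate the absorbed errors.
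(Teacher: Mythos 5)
Your argument is correct and coincides with the computation the paper leaves implicit: contract $F^{i\bar j}$ against the displayed formula for $w_{i\bar j}$, extract the good quadratic term $\sum_i f_i\sum_k(|u_{ki}|^2+|u_{i\bar k}|^2)\ge\theta Q\sum F^{i\bar i}$ from \eqref{assum-fue1}, substitute the differentiated equation \eqref{main-equ2} for the third-order pieces to produce the honest $|\nabla\psi|\sqrt{w}$ term, and Young-absorb the mixed errors at a cost of $\tfrac14\theta Q\sum F^{i\bar i}$. The only blemish is the intermediate Cauchy--Schwarz display, where $\sum_k(|u_k|^2+|u_{\bar k}|^2)=2w$, not $w$, so your stated chain yields $4w\sum_k(|u_{ki}|^2+|u_{i\bar k}|^2)$; instead apply Cauchy--Schwarz separately to the two pieces of $w_i$ and then $|a+b|^2\le 2|a|^2+2|b|^2$, which gives $|w_i|^2\le 2w\sum_k(|u_{ki}|^2+|u_{i\bar k}|^2)$ and recovers the stated $2wQ\sum F^{i\bar i}$ bound exactly.
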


 \subsubsection{Interior estimate for first derivative}

 Let's consider the quantity 
 \[m_{0} = \max_{\bar M} \eta  |\nabla u|^2 e^{\phi} \]
 where $\eta \geq 0$ and $\phi = \phi (z, u)$ are functions 
 to be determined.  
 Suppose that $m_{0}$ is attained at an interior point $z_0 \in M$. 
 We choose local coordinates $(z_{1}, \ldots, z_{n})$ such that 
 $g_{i\bar j} = \delta_{ij}$ 
 at $z_0$.
 As above we denote $w = |\nabla u|^2$. Without loss of generality, $w\geq 1$ at $z_0$.  
 From above, the function 
 $\log \eta+\log w +\phi$ 
 achieves a maximum at $z_{0}$ and therefore, 
 \begin{equation}
 	\label{gqy-G32}
 	\frac{\eta_{i}}{\eta}+\frac{w_{i}}{w}+\phi_{i} = 0, \;\;
 	\frac{\eta_{\bar i}}{\eta}+\frac{w_{\bi}}{w}+\phi_{\bar i} = 0,
 \end{equation}
 \begin{equation}
 	\label{gqy-G34}
 	\mathcal{L}(\log \eta+\log w+\phi)\leq 0.
 \end{equation}
 
 Combining \eqref{gqy-G32} with Cauchy-Schwarz inequality, we derive
 \begin{equation}
 	\frac{1}{w^2} F^{i\bar j}w_i w_{\bar j} \leq \frac{1+\epsilon}{\epsilon \eta^2} F^{i\bar j}\eta_i \eta_{\bar j}
 	+(1+\epsilon) F^{i\bar j} \phi_i \phi_{\bar j}.
 \end{equation}
 As a result, combining with Lemma \ref{lemma-Lw} and let  $8\epsilon\leq \theta$, we derive at $z_0$
 \begin{equation}
 	\label{L-logw}
 	\begin{aligned}
 		\mathcal{L} \log w 
 		\geq \,& 
 		\frac{(3\theta-8\epsilon)  Q}{4w} \sum F^{i\bar i}
 		-C\sum F^{i\bar i} -\frac{C|\nabla \psi|}{\sqrt{w}} 
 		\\\,&
 		-\frac{1-\epsilon^2}{\epsilon \eta^2} F^{i\bar j} \eta_i \eta_{\bar j}
 		-(1-\epsilon^2) F^{i\bar j} \phi_i \phi_{\bar j} 
 			\\	\geq \,& 
 		\frac{  \theta Q}{2w} \sum F^{i\bar i}	-C\sum F^{i\bar i} -\frac{C|\nabla \psi|}{\sqrt{w}} 
			 -\frac{1}{\epsilon \eta^2} F^{i\bar j} \eta_i \eta_{\bar j}	 -  F^{i\bar j} \phi_i \phi_{\bar j}. 
 	\end{aligned}
 \end{equation}
 On the other hand, 
 \begin{equation}
 	\label{L-logeta}
 	\begin{aligned}
 		\mathcal{L} \log \eta 
 		= \frac{F^{i\bar j}\eta_{i \bar j}}{\eta}
 		+F^{i\bar j} S_{i\bar j}^k \frac{\eta_k}{\eta} +F^{i\bar j} \overline{S_{j\bar i}^k} \frac{\eta_{\bar k}}{\eta}
 		-F^{i\bar j}\frac{\eta_i \eta_{\bar j}}{\eta^2}.
 	\end{aligned}
 \end{equation}
 
 To derive the interior estimate,
 following \cite{Guan2003Wang} (see also \cite{GQY2018}) we take $\eta$ to be a smooth 
 function with compact support in $B_{r}\subset M$
 satisfying
 \begin{equation}
 	\label{2-22}
 	0\leq \eta \leq 1, ~~\eta|_{B_{\frac{r}{2}}}\equiv 1,
 	~~|\nabla \eta | \leq \frac{C \sqrt{\eta}}{r},  
 	~~|\nabla^{2} \eta| \leq \frac{C}{r^2}.
 \end{equation}
Thus
 \begin{equation}
 	\label{2-23}
 	\begin{aligned}
 		\frac{1+\epsilon}{\epsilon \eta^{2}} F^{i\bj} \eta_i \eta_{\bj} 
 		+F^{i\bar j} S_{i\bar j}^k \frac{\eta_k}{\eta} +F^{i\bar j} \overline{S_{j\bar i}^k} \frac{\eta_{\bar k}}{\eta}
 		- \frac{1}{\eta} F^{i\bj} \eta_{i\bj}  
 		\leq  \frac{C}{\epsilon r^2 \eta} \sum F^{i\bi}.
 	\end{aligned}
 \end{equation}

 As in \cite{Guan2008IMRN},  
 let $\phi=v^{-N}$ where $v=u-\inf_{B_r} u+2$ ($v\geq 2$ in $B_r$)  and $N\geq 1$ is an integer that is chosen later. By direct computation 
 \begin{equation}
 	\label{phii}
 	\begin{aligned}
 		\,&  \phi_i=-Nv^{-N-1}  u_i,  \quad \phi_{\bar i}=-Nv^{-N-1} u_{\bar i}
 		\\
 		\,& \phi_{i\bar j} 
 		=N(N+1)v^{-N-2} u_i u_{\bar j}-Nv^{-N-1} u_{i\bar j}.   \nonumber
 	\end{aligned}
 \end{equation}
 So
 \begin{equation}
 	\label{5-12}
 	\begin{aligned}
 		F^{i\bar i} \phi_i \phi_{\bar j}=N^2 v^{-2N-2} F^{i\bar j} u_iu_{\bar j}
 	\end{aligned}
 \end{equation}
 and
 \begin{equation}
 	\label{Lphi}
 	\begin{aligned}
 		\mathcal{L}\phi
 		= \,& N(N+1)v^{-N-2}F^{i\bar j} u_i u_{\bar j} -Nv^{-N-1}F^{i\bar j} u_{i\bar j}  
 		  -Nv^{-N-1}(F^{ij}S_{i\bar j}^k u_k + F^{ij} \overline{S_{j\bar i}^k} 
 		u_{\bar k})
 		\\
 		=\,& N(N+1)v^{-N-2}F^{i\bar j} u_i u_{\bar j} -Nv^{-N-1}F^{i\bar j}    (\mathfrak{g}_{i\bar j}-\chi_{i\bar j}).
 	\end{aligned}
 \end{equation} 
 The concavity of $f$ implies that there is a positive constant $C_1$ such that
 \begin{equation}
 	\label{5-14}
 	F^{i\bar j}\mathfrak{g}_{i\bar j}=\sum_{i=1}^n f_i \lambda_i \leq C_1 \sum_{i=1}^n f_i.
 \end{equation}
 
 We choose $N\gg1$ so that $Nv^{-N}<1$, then 
 \begin{equation}\label{5-15}
 	N(N+1)v^{-N-2}-N^2 v^{-2N-2} \geq N^2 v^{-N-2}.
 \end{equation}
 
 Plugging \eqref{L-logw}-\eqref{L-logeta} and
 \eqref{2-23}-\eqref{5-15} 
 into  \eqref{gqy-G34}, we obtain
 \begin{equation}
 	\begin{aligned}
 		\theta w N^2v^{-N-2} \sum F^{i\bar i}+\frac{\theta Q}{2w}\sum F^{i\bar i} 
 		\leq CN v^{-N-1} \sum F^{i\bar i}+ \frac{C}{r^2 \eta }\sum F^{i\bar i}
 		+\frac{C}{\sqrt{w}}. \nonumber
 	\end{aligned}
 \end{equation}
 Note  
 \[\frac{\theta w N^2v^{-N-2}}{2} \sum F^{i\bar i}+\frac{\theta Q}{2w}\sum F^{i\bar i} \geq \theta N v^{-\frac{N}{2}-1} \sqrt{Q}\sum F^{i\bar i}\]
 and that there exists $R_0>0$ such that for any $\lambda$ with $|\lambda|\geq R_0$
 \[|\lambda|\sum_{i=1}^n f_i(\lambda) \geq \frac{f(|\lambda|\vec{\bf 1})-f(\lambda)}{2}
 \geq \frac{f(R_0\vec{\bf 1})-\psi}{2}>0.\]
 
 As a result, we obtain Theorem \ref{thm1-inter-3}.

 \subsubsection{Interior estimate for second order derivatives}
  
We derive  second order interior estimate.

 \begin{proof}
 	As in \cite{GQY2018} 
 	we  consider the following
 	quantity  
 	\begin{equation}
 		\begin{aligned}
 			P:= \sup_{z \in M} \max_{\xi \in T^{1,0}_z M}
 			\; e^{2 \phi} \mathfrak{g}_{p\bar q} \xi_p \bar{\xi_q}
 			\sqrt{g^{k\bar l}  \mathfrak{g}_{i\bar l}  \mathfrak{g}_{k\bar j}  \xi_i \bar{\xi_j}}/|\xi|^3  \nonumber
 		\end{aligned}
 	\end{equation}
 	where $\phi$ is a function depending on $z$ and $|\nabla u|$.
 	Assume that it is achieved at an interior point $p_0 \in M$ for some
 	$\xi \in T^{1,0}_{p_0} M$. 
 	The quantity $P$  is inspired by \cite{Tosatti2013Weinkove}.
 	We choose local coordinates $z=(z_{1},\cdots,z_{n})$ around $p_0$, such that at $p_0$
 	$$g_{i\bar j}=\delta_{ij}, \quad \mathfrak{g}_{i\bar j}=\delta_{ij}\lambda_{i} \mbox{ and } F^{i\bar j}=\delta_{ij}f_i.$$
 	The maximum $P$ is achieved for $\xi = \partial_1$ at $p_0$.
 	We assume $\mathfrak{g}_{1\bar{1}} \geq 1$;  otherwise we are done.
 	
 	In what follows the computations are given at $p_0$.
 	Similar to the computations in \cite{GQY2018} one has
 	\begin{equation}
 		\label{mp1}
 		\mathfrak{g}_{1\bar{1} i} + \mathfrak{g}_{1\bar{1}} \phi_i = 0, \;\;
 		\mathfrak{g}_{1\bar{1}\bar i} +  \mathfrak{g}_{1\bar{1}} \phi_{\bar i} = 0,
 	\end{equation}
 	\begin{equation}
 		\label{gblq-C90}
 		\begin{aligned}
 			0  \geq   \frac{F^{i\bar i}   \mathfrak{g}_{1\bar{1} i\bar i}}{\mathfrak{g}_{1\bar 1}}
 			+ F^{i\bar i} (\phi_{i\bar i} - \phi_i \phi_{\bar i})
 			+ \frac{1}{8  \mathfrak{g}_{1\bar{1}}^2} \sum_{k>1} F^{i\bar i}
 			\mathfrak{g}_{1 \bar k i}   \mathfrak{g}_{k\bar{1} \bar i}
 			- C\sum F^{i\bar i}.
 		\end{aligned}
 	\end{equation}
 	
 	Combining with the standard formula \eqref{deco1}, together with straightforward  computation, we can derive 
 	\begin{equation}
 		\begin{aligned}
 			\mathfrak{g}_{1\bar 1 i\bar i} \geq \,&
 			\mathfrak{g}_{i\bar i 1\bar 1}+
 			2\mathfrak{Re} (\bar T^j_{1i}\mathfrak{g}_{1\bar j i})
 			+  2\mathfrak{Re} 
 			(S_{1\bar 1}^l\mathfrak{g}_{i\bar i l}-
 			S_{i\bar i}^l\mathfrak{g}_{1\bar 1 l})
 			-C\sqrt{Q} 
 		\end{aligned}
 	\end{equation}
 	where as above one denotes
 	\[Q=|\partial\overline{\partial}u|^2+|\partial \partial u|^2.\]

 	Differentiating equation \eqref{main-equ2}  twice 
 	(using covariant derivative), we obtain
 	\begin{equation}
 		\label{gqy-45diff the equation}
 		\begin{aligned}
 			F^{i\bar i}\mathfrak{g}_{i\bar i l} \,& =\psi_l, \\   \end{aligned}
 	\end{equation}
 	\begin{equation}
 		\begin{aligned}
 			F^{i\bar i}\mathfrak{g}_{i\bar i 1\bar 1}
 			=\psi_{1\bar 1}-F^{i\bar j,l\bar m }\mathfrak{g}_{i\bar j1}\mathfrak{g}_{l\bar m\bar 1}.   
 		\end{aligned}
 	\end{equation}
 	Then we have
 	\begin{equation}
 		\label{mp3}
 		\begin{aligned}
 			F^{i\bar i}\mathfrak{g}_{1\bar 1 i\bar i}
 			\geq \,&
 			2\mathfrak{Re} (F^{i\bar i}\bar T^j_{1i}\mathfrak{g}_{1\bar j i})
 			- 2\mathfrak{Re} 
 			F^{i\bar i}S_{i\bar i}^l\mathfrak{g}_{1\bar 1 l}
 			-C\sqrt{Q}\sum F^{i\bar i}.
 		\end{aligned}
 	\end{equation}
 	
 	Putting the above inequalities
 	into \eqref{gblq-C90} we get
 	\begin{equation}
 		\label{gblq-C90-2}
 		\begin{aligned}
 			0  \geq  \,&
 			\mathfrak{g}_{1\bar{1}}F^{i\bar i} (\phi_{i\bar i} - \phi_i \phi_{\bar i})
 			- 2\mathfrak{Re} 
 			(F^{i\bar i}S_{i\bar i}^l\mathfrak{g}_{1\bar 1 l})
 				+
 			2\mathfrak{Re} (F^{i\bar i}\bar T^1_{1i}\mathfrak{g}_{1\bar 1 i})
 			-C\sqrt{Q}\sum F^{i\bar i}
 			\\
 			=\,&
 			\mathfrak{g}_{1\bar 1} \mathcal{L}\phi -\mathfrak{g}_{1\bar 1}F^{i\bar i}\phi_i\phi_{\bar i}
 			-2\mathfrak{g}_{1\bar 1}\mathfrak{Re} (F^{i\bar i}\bar T^1_{1i}\phi_{i})
 			-C\sqrt{Q}\sum F^{i\bar i}.  \nonumber
 		\end{aligned}
 	\end{equation}

 	Let $\phi=\log\eta+\varphi(w)$, where as above $w = |\nabla u|^2$, and $\eta$ is the cutoff function given by  \eqref{2-22}. Then
 	\begin{equation}
 		\begin{aligned}
 			\mathcal{L}\phi=\,&
 			\frac{\mathcal{L}\eta}{\eta} -F^{i\bar i}\frac{|\eta_i|^2}{\eta^2}+\varphi' \mathcal{L}w +\varphi''F^{i\bar i}|w_i|^2,
 		\end{aligned}
 	\end{equation}
 	\begin{equation}
 		\begin{aligned}
 			F^{i\bar i}|\phi_i|^2
 			+2 \mathfrak{Re} F^{i\bar i}\bar T^1_{1i}\phi_{i} \leq \frac{4}{3}F^{i\bar i}|\phi_i|^2+C\sum F^{i\bar i}
 		\end{aligned}
 	\end{equation}
 	and  
 	\begin{equation}
 		\begin{aligned}
 			F^{i\bar i} |\phi_i|^2
 			\leq \frac{3}{2}F^{i\bar i}|\varphi_i|^2+3F^{i\bar i}\frac{|\eta_i|^2}{\eta^2}.  \nonumber
 		\end{aligned}
 	\end{equation}
 	
 As in \cite{Guan2008IMRN} we set $$\varphi=\varphi(w)=\left(1-\frac{w}{2N}\right)^{-\frac{1}{2}} \mbox{ where   $N=\sup_{\{\eta>0\}}|\nabla u|^2$}. $$
 	One can check $\varphi'=\frac{\varphi^3}{4N},$ $\varphi''=\frac{3\varphi^5}{16N^2}$ and $1\leq \varphi\leq \sqrt{2}$.
 	And so
 	\begin{equation}
 		\begin{aligned}
 			\varphi''-2\varphi'^2=\frac{\varphi^5}{16N^2}(3-2\varphi)>\frac{\varphi^5}{96N^2}.
 		\end{aligned}
 	\end{equation}
 	By Lemma \ref{lemma-Lw} we have
 	\begin{equation}
 		\begin{aligned}
 			\mathcal{L}(w) \geq   \frac{3\theta Q}{4} \sum F^{ii}-C (1+\sum F^{i\bar i}).
 		\end{aligned}
 	\end{equation}
 By \eqref{2-22} we obtain
 	\begin{equation}
 		\begin{aligned}
 			0\leq \eta \leq 1, ~~\eta|_{B_{\frac{r}{2}}}\equiv 1,
 			~~F^{i\bar i}\frac{|\eta_i|^2}{\eta^2} \leq \frac{C}{r^2 \eta},  
 			~~\frac{\mathcal{L}\eta}{\eta} \leq  \frac{C}{r^2\eta} \sum F^{i\bar i}.
 		\end{aligned}
 	\end{equation}
 	
 	In conclusion we finally derive
 	\begin{equation}
 		\begin{aligned}
 			0
 			\geq 
 			\frac{9\theta Q}{16N} \sum F^{i\bar i}-\frac{C}{r^2\eta}
 			\sum F^{i\bar i}
 			-C\frac{\sqrt{Q}}{\mathfrak{g}_{1\bar{1}}} \sum F^{i\bar i}. 
 		\end{aligned}
 	\end{equation}
 	This gives 
 	\[\eta \mathfrak{g}_{1\bar 1}\leq \frac{C}{r^2}.\]
 \end{proof}

 \subsection{Completion the proof of Theorem \ref{thm1-inter}}
 First, Theorem \ref{mainthm-1} implies the following result.
 \begin{theorem}
 	\label{thm-3}
 	Assume \eqref{concave}, \eqref{elliptic-weak} and \eqref{key-123} hold.  Then for fixed $\sigma$,
 	\begin{itemize}
 		
 		\item[$\mathrm{\bf (1)}$] If $ \partial\Gamma^\sigma\setminus\bar\Gamma_n \neq \emptyset,$
 		then $\kappa_{\Gamma_{\sigma,f}}\geq 1$. 
 		
 		\item[$\mathrm{\bf (2)}$]
 		If $\Gamma$ is of type 2 cone 
 		and 
 		\begin{equation}
 			\label{unbound-1}
 			\begin{aligned}
 				f(0,\cdots,0,t)>\sigma \mbox{ for some } t>0,  
 			\end{aligned}
 		\end{equation} 
 	 then $\kappa_{\Gamma_{\sigma,f}}=n-1$  and $f$ is of fully uniform ellipticity when restricted to 
 		 $\partial\Gamma^\sigma$.
 	\end{itemize}
 \end{theorem}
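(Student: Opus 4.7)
The plan is to exhibit, for each part, an explicit vector in $\Gamma_{\sigma,f}$ with the required sign pattern, and then invoke Theorem~\ref{mainthm-1}. Two ideas drive both arguments. First, \eqref{elliptic-weak} allows us to enlarge coordinates of a given point without leaving the superlevel set $\{f\geq\sigma\}$. Second, a \emph{ray-crossing trick} anchored at $\tau_\sigma\vec{\bf 1}$: since \eqref{012} yields $f(\tau_\sigma\vec{\bf 1})<\sigma$ and \eqref{key-123} yields $\tau_\sigma\geq 0$, the Intermediate Value Theorem converts any $\mu\in\Gamma$ with $f(\mu)\geq\sigma$ into a point $\tau_\sigma\vec{\bf 1}+s_0(\mu-\tau_\sigma\vec{\bf 1})\in\partial\Gamma^\sigma$ for some $s_0\in(0,1]$, so that $s_0(\mu-\tau_\sigma\vec{\bf 1})\in\Gamma_{\sigma,f}$ inherits the sign pattern of $\mu-\tau_\sigma\vec{\bf 1}$.

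For Part~(1), pick $\lambda\in\partial\Gamma^\sigma\setminus\bar\Gamma_n$; after relabelling, $\lambda_1<0$. Set $\lambda^{(\epsilon)}:=\lambda+\epsilon\sum_{i\geq 2}e_i$ with $\epsilon>0$ so large that $\lambda_i+\epsilon>\tau_\sigma$ for every $i\geq 2$. Convexity and openness of $\Gamma$ together with $\sum_{i\geq 2}e_i\in\bar\Gamma_n\subseteq\bar\Gamma$ force $\lambda^{(\epsilon)}\in\Gamma$, and \eqref{elliptic-weak} yields $f(\lambda^{(\epsilon)})\geq f(\lambda)=\sigma$. Since $\lambda_1<0\leq\tau_\sigma$, the first coordinate of $\lambda^{(\epsilon)}-\tau_\sigma\vec{\bf 1}$ is strictly negative while the remaining coordinates are strictly positive. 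The ray-crossing trick applied to $\lambda^{(\epsilon)}$ produces an element of $\Gamma_{\sigma,f}$ with sign pattern $(-,+,\ldots,+)$, giving $\kappa_{\Gamma_{\sigma,f}}\geq 1$.

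For Part~(2), the type~2 hypothesis furnishes $\nu=(-\alpha_1,\ldots,-\alpha_{n-1},\alpha_n)\in\Gamma$ with all $\alpha_i>0$. By \eqref{unbound-1} combined with $f_n\geq 0$, choose $t>\tau_\sigma$ so that $f(te_n)>\sigma$. For every $r>0$, the perturbation $\lambda_r:=te_n+r\nu$ lies in the open convex cone $\Gamma$ (as $te_n\in\bar\Gamma$ and $\nu\in\Gamma$), and continuity of $f$ at $te_n$ gives $f(\lambda_r)>\sigma$ once $r$ is small. Because $\tau_\sigma\geq 0$, the vector
\[
\lambda_r-\tau_\sigma\vec{\bf 1}=\bigl(-r\alpha_1-\tau_\sigma,\ \ldots,\ -r\alpha_{n-1}-\tau_\sigma,\ t+r\alpha_n-\tau_\sigma\bigr)
\]
has strictly negative first $n-1$ entries and strictly positive last entry. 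The ray-crossing trick then delivers an element of $\Gamma_{\sigma,f}$ with sign pattern $(-,\ldots,-,+)$, hence $\kappa_{\Gamma_{\sigma,f}}\geq n-1$. The reverse bound $\kappa_{\Gamma_{\sigma,f}}\leq n-1$ follows from $\vec{\bf 1}\in\Gamma_{\sigma,f}$ (since $c_\sigma\vec{\bf 1}-\tau_\sigma\vec{\bf 1}=(c_\sigma-\tau_\sigma)\vec{\bf 1}$ with $c_\sigma>\tau_\sigma$) and the symmetry of $\Gamma_{\sigma,f}$: a putative all-negative element would, after averaging over coordinate permutations, place $-c\vec{\bf 1}$ with $c>0$ into the cone, and the segment from $\vec{\bf 1}$ to $-c\vec{\bf 1}$ would pass through the origin inside the open cone, a contradiction. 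Hence $\kappa_{\Gamma_{\sigma,f}}=n-1$, and fully uniform ellipticity on $\partial\Gamma^\sigma$ follows from Theorem~\ref{mainthm-1}.

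The main obstacle I anticipate is ensuring that the test points $\lambda^{(\epsilon)}$ and $\lambda_r$ land in the \emph{open} cone $\Gamma$ (so that $f$ is defined and continuous there); in Part~(2) the base point $te_n$ generally sits on $\partial\Gamma$, and the argument crucially uses $\nu$ being a genuine interior direction of $\Gamma$ to push $te_n+r\nu$ into the interior. A secondary subtlety is the strictness of all sign inequalities, which rests on the bound $\tau_\sigma\geq 0$ and on our freedom to take $\epsilon$ (resp.~$t$) as large as desired.
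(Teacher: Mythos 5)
Your proof is correct and follows the same overall strategy as the paper --- exhibit an element of $\Gamma_{\sigma,f}$ with the required sign pattern relative to $\tau_\sigma\vec{\bf 1}$ and invoke Theorem \ref{mainthm-1} --- but you execute the sign bookkeeping more carefully, and the systematic ray-crossing from $\tau_\sigma\vec{\bf 1}$ is a cleaner mechanism than what the paper records. In Part (1) the paper merely notes that $\lambda-\tau_\sigma\vec{\bf 1}\in\Gamma_{\sigma,f}$ has a negative entry, leaving the positivity of the remaining entries implicit; your enlargement $\lambda^{(\epsilon)}=\lambda+\epsilon\sum_{i\geq 2}e_i$ followed by the ray-crossing makes the pattern $(-,+,\ldots,+)$ explicit. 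In Part (2) the paper rescales along the ray through the origin to locate $(-\beta\epsilon_0,\ldots,-\beta\epsilon_0,\beta t_0)\in\partial\Gamma^\sigma$ and then subtracts $\tau_\sigma\vec{\bf 1}$; the needed inequality $\beta t_0>\tau_\sigma$ is not spelled out (it follows, e.g., from the supporting-hyperplane inequality $\sum_i\mu_i\geq nc_\sigma$ at $c_\sigma\vec{\bf 1}$ for $\mu\in\partial\Gamma^\sigma$, so $\beta t_0>c_\sigma>\tau_\sigma$), whereas your choice $t>\tau_\sigma$ makes it immediate. You also supply the upper bound $\kappa_{\Gamma_{\sigma,f}}\leq n-1$, which the paper treats as understood. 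Your argument for it implicitly uses convexity of $\Gamma_{\sigma,f}$, which is true but not stated in the text; a more direct route that avoids this is to observe that an all-negative $\mu\in\Gamma_{\sigma,f}$ would force $\sum_i f_i(\lambda)\mu_i<0$ for any $\lambda\in\partial\Gamma^\sigma$, contradicting Lemma \ref{lemma3.4-2} together with \eqref{sumfi-02}.

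Two minor remarks on your own cautions. First, for a type~2 cone the point $te_n$ in fact lies in $\Gamma$ itself (average a type~2 vector with a suitable vector in $\Gamma_n$), so the worry about $te_n\in\partial\Gamma$ is moot, though your argument covers it anyway via $\bar\Gamma+\Gamma\subseteq\Gamma$. Second, the ray-crossing relies on $f(\tau_\sigma\vec{\bf 1})<\sigma$ (or its limiting version when $\tau_\sigma=0$); this does hold because $\tau_\sigma<c_\sigma$ and, by Lemma \ref{k-buchong2}, $t\mapsto f(t\vec{\bf 1})$ is strictly increasing on $\{f<\sup_\Gamma f\}$, so it is worth stating explicitly.
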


\begin{proof}
Prove $\mathrm{\bf (1)}$: By \eqref{key-123}, $\tau_\sigma\geq 0$. Let $\lambda\in \partial\Gamma^\sigma\setminus\bar\Gamma_n$, and we assume $\lambda_n<0$. Note that $\lambda-\tau_\sigma \vec{\bf 1}\in \Gamma_{\sigma,f}$ and $\lambda_n-\tau_\sigma\leq \lambda_n<0$. Thus $\kappa_{\Gamma_{\sigma,f}}\geq1.$

Prove $\mathrm{\bf (2)}$: From \eqref{unbound-1}, there are  some positive constants $\epsilon_0$, $t_0$ such that $f(-\epsilon_0,\cdots,-\epsilon_0,t_0)>\sigma$. And then there is $0<\beta<1$ such that $f(-\beta\epsilon_0,\cdots,-\beta\epsilon_0,\beta t_0)=\sigma.$ Thus $(-\beta\epsilon_0-\tau_\sigma,\cdots,-\beta\epsilon_0-\tau_\sigma,\beta t_0-\tau_\sigma)\in \Gamma_{\sigma,f}$ as required. 
\end{proof}
 
 Next we present 
 functions being of fully uniform ellipticity.
 
 \begin{corollary}
 	\label{coro-n-1-equ}
 	Let $(\tilde{f},\tilde{\Gamma})$ be as defined in \eqref{n-1-equation}. Let $\sup_{\partial\Gamma}f<\sigma<\sup_\Gamma f$.
 	Suppose that $(f,\Gamma)$ satisfies \eqref{concave}, \eqref{elliptic-weak}, \eqref{key-123}, $\Gamma\neq\Gamma_n$
 	and 
 	\begin{equation}
 		\label{unbound-2}
 		\begin{aligned}
 			f(t,\cdots,t, 0)>\sigma  \mbox{ for some } t>0. 
 		\end{aligned}
 	\end{equation} 
 	Then $\tilde{f}$ is of fully uniform ellipticity when  restricted to  $\{\lambda\in \tilde{\Gamma }: \tilde{f}(\lambda)=\sigma\}$.
 \end{corollary}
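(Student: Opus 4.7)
The plan is to apply Theorem \ref{thm-3}(2) directly to the pair $(\tilde f,\tilde\Gamma)$ at the level $\sigma$, so the entire task reduces to checking that the hypotheses of that theorem transfer cleanly from $(f,\Gamma)$ to $(\tilde f,\tilde\Gamma)$ under the given assumptions. Since the map $T\colon\lambda\mapsto\mu$, $\mu_i=\sum_{j\neq i}\lambda_j$, is a linear symmetric automorphism of $\mathbb{R}^n$ (invertible because $T=\vec{\bf 1}\vec{\bf 1}^{\,T}-I$), $\tilde\Gamma$ is automatically an open symmetric convex cone containing $\Gamma_n$, and $\tilde f=f\circ T$ is smooth, symmetric, and concave on $\tilde\Gamma$; so \eqref{concave} is free. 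The chain rule gives
\[
\tilde f_i(\lambda)=\sum_{j\neq i} f_j(\mu),
\]
so \eqref{elliptic-weak} for $f$ yields \eqref{elliptic-weak} for $\tilde f$.

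Next I would verify \eqref{key-123} for $\tilde f$ by swapping the order of summation:
\[
\sum_{i=1}^n \tilde f_i(\lambda)\lambda_i
=\sum_{i=1}^n\Bigl(\sum_{j\neq i}f_j(\mu)\Bigr)\lambda_i
=\sum_{j=1}^n f_j(\mu)\sum_{i\neq j}\lambda_i
=\sum_{j=1}^n f_j(\mu)\mu_j.
\]
For $\lambda\in\partial\tilde\Gamma^\sigma$ one has $\mu\in\partial\Gamma^\sigma$, and the assumed \eqref{key-123} for $(f,\Gamma)$ makes the right-hand side nonnegative, as required. The remaining hypothesis \eqref{unbound-1} for $\tilde f$ is even simpler: evaluating at $\lambda=(0,\ldots,0,t)$ gives $\mu=(t,\ldots,t,0)$ and hence $\tilde f(0,\ldots,0,t)=f(t,\ldots,t,0)$, which exceeds $\sigma$ for some $t>0$ by \eqref{unbound-2}.

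The step that I expect to require the most care is establishing that $\tilde\Gamma$ is of type 2 cone. Here the argument uses both $\Gamma\neq\Gamma_n$ and \eqref{unbound-2}: these hypotheses together with $\sigma>\sup_{\partial\Gamma}f$ force $(t,\ldots,t,0)$ to lie in the \emph{open} cone $\Gamma$ for a suitable $t>0$, and its preimage $\lambda^*=(0,\ldots,0,t)$ lies in $\tilde\Gamma$. Openness of $\tilde\Gamma$ then shows that a small perturbation $(-\varepsilon,\ldots,-\varepsilon,t)$ still belongs to $\tilde\Gamma$ (since $T$ maps it near $(t-(n-2)\varepsilon,\ldots,t-(n-2)\varepsilon,-(n-1)\varepsilon)$, which remains in the open cone $\Gamma$ for $\varepsilon$ small), producing an element of $\tilde\Gamma$ with $n-1$ strictly negative coordinates; in particular $\kappa_{\tilde\Gamma}\geq 1$, which is the type 2 property.

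Once these four items are in place, Theorem \ref{thm-3}(2) applied to $(\tilde f,\tilde\Gamma)$ at level $\sigma$ immediately yields $\kappa_{\tilde\Gamma_{\sigma,\tilde f}}=n-1$ together with the fully uniform ellipticity of $\tilde f$ when restricted to $\{\lambda\in\tilde\Gamma:\tilde f(\lambda)=\sigma\}$, which is the desired conclusion. The main conceptual content, therefore, is the observation that every structural assumption on the level set $\partial\Gamma^\sigma$ pulls back through the linear change of variables $T$ to the corresponding assumption on $\partial\tilde\Gamma^\sigma$, so that level-set partial uniform ellipticity automatically upgrades to the $(n-1)$-form equation of \eqref{n-1-equation}.
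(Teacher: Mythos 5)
Your proof is correct, and it follows exactly the route the paper intends: the author states Corollary \ref{coro-n-1-equ} immediately after Theorem \ref{thm-3} without further argument, and your write-up supplies the missing verification that the hypotheses of Theorem \ref{thm-3}(2) transfer from $(f,\Gamma)$ to $(\tilde f,\tilde\Gamma)$ through the linear map $T=\vec{\bf1}\vec{\bf1}^{\,T}-I$. The key identities $\tilde f_i(\lambda)=\sum_{j\neq i}f_j(\mu)$ and $\sum_i\tilde f_i(\lambda)\lambda_i=\sum_j f_j(\mu)\mu_j$ are the essential algebraic content, and your openness argument producing $(-\varepsilon,\ldots,-\varepsilon,t)\in\tilde\Gamma$ correctly shows $\kappa_{\tilde\Gamma}\geq n-1$, which is more than the type~2 property required. (Minor observation: the hypothesis $\Gamma\neq\Gamma_n$ is actually already implied by \eqref{unbound-2}, since the latter presupposes $(t,\ldots,t,0)\in\Gamma\setminus\Gamma_n$; your appeal to it in the type~2 step is therefore harmless but not strictly needed.)
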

 
 Corollary \ref{coro-n-1-equ} immediately implies the following:
 \begin{proposition} 
 	If \eqref{concave}, \eqref{elliptic-weak}, \eqref{neqgamma_n}, \eqref{con10} and \eqref{con11} hold, then equation \eqref{n-1-equ1} is of fully uniform  ellipticity at any solution 
 	satisfying \eqref{admissible-f1}. 
 \end{proposition}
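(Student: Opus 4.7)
The plan is to reduce the proposition to Corollary \ref{coro-n-1-equ} applied to $(\tilde f, \tilde\Gamma)$. First I would check that the hypotheses of that corollary hold for $(f,\Gamma)$ at every level $\sigma \in [\inf_M\psi, \sup_M\psi]$: \eqref{concave}, \eqref{elliptic-weak}, and $\Gamma \neq \Gamma_n$ are immediate; \eqref{key-123} on $\partial\Gamma^\sigma$ follows from \eqref{con11}, since each such $\sigma$ lies in $[\underline{\psi}, \overline{\psi}]$; and \eqref{unbound-2} follows from \eqref{con10} by continuity of $f$, because $\lim_{t\to+\infty} f(t,\dots,t,0) > \sup_M \psi \geq \sigma$ forces some finite $t$ with $f(t,\dots,t,0) > \sigma$. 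Corollary \ref{coro-n-1-equ} then yields a uniform $\theta > 0$ such that
\[ \tilde f_i(\nu) \geq \theta \sum_{j=1}^n \tilde f_j(\nu) \qquad \text{for all } i \text{ and all } \nu \in \tilde\Gamma \text{ with } \tilde f(\nu) \in [\inf_M\psi, \sup_M\psi]. \]

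Next I would match this bound with the linearization of \eqref{n-1-equ1}. The key observation is that for a Hermitian matrix $H$ with eigenvalues $h_1,\dots,h_n$, the matrix $(\operatorname{tr} H)I - H$ has eigenvalues $\sum_{j\neq i} h_j$. Differentiating $F(u) := f(\lambda(A))$, with $A = \chi + \tfrac{1}{n-1}(\Delta u\,\omega - \sqrt{-1}\partial\overline{\partial} u) + Z$, in the variable $u_{p\overline{q}}$ -- noting that $Z$ depends only on first derivatives of $u$ and so does not enter the principal symbol -- a short computation in the eigenbasis of $A$ shows that $F^{i\overline{j}}$ is diagonal, with entries
\[ F^{i\overline{i}} = \frac{1}{n-1}\sum_{k\neq i} f_k(\lambda(A)) = \frac{1}{n-1}\, \tilde f_i(\nu), \]
where $\nu \in \tilde\Gamma$ is the vector with $\sum_{j\neq i}\nu_j = \lambda_i(A)$ (under a compatible ordering).

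Combining the two steps, admissibility \eqref{admissible-f1} gives $\lambda(A) \in \Gamma$ and $f(\lambda(A)) = \psi(x)$, hence $\tilde f(\nu) = \psi(x) \in [\inf_M\psi, \sup_M\psi]$; the bound from the first step then applies to $\nu$, producing $F^{i\overline{i}} \geq \theta \sum_j F^{j\overline{j}}$ for every $i$, which is precisely full uniform ellipticity of \eqref{n-1-equ1} at the admissible solution. The main substantive step is the eigenbasis identification of $F^{i\overline{i}}$ with $\tilde f_i(\nu)/(n-1)$; once that is in hand, the proposition is a direct translation of Corollary \ref{coro-n-1-equ}.
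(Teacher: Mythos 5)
Your proposal is correct and takes essentially the same route as the paper: the paper states that Corollary \ref{coro-n-1-equ} ``immediately implies'' the proposition, and your argument spells out exactly what that reduction requires --- verifying \eqref{key-123} and \eqref{unbound-2} at each relevant level $\sigma$, and then identifying $F^{i\bar i}$ with $\tfrac{1}{n-1}\tilde f_i(\nu)$ via the linearization in an eigenbasis of $A$. The eigenbasis computation you supply is the content the paper leaves implicit, and it is carried out correctly.
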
 
 This proposition confirms all the assumptions imposed in Theorem  \ref{thm1-inter-3}, 
  thereby obtaining Theorem \ref{thm1-inter}.

 \section{Applications to Hessian
 	  equations
  }
 \label{sec5}

 Let $(M,g)$ be a $n$-dimensional compact Riemannian manifold, possibly with boundary $\partial M$, $\bar M=M\cup \partial M$.
 We consider
the  Hessian equations 
 \begin{equation}
 	\label{hessianequ1-riemann}
 	\begin{aligned}
 		f(\lambda(\nabla^2 u+A))=\psi.
 	\end{aligned}
 \end{equation}
where $\psi$ is a smooth function and $A$ is a smooth symmetric $(0,2)$-type tensor.
 The second order estimate for \eqref{hessianequ1-riemann} on curved Riemannian manifolds was studied by 
 Guan \cite[Section 3]{Guan12a},
 extending previous results in literature, see e.g. \cite{LiYY1990,Urbas2002}.
 The gradient estimate is the remaining task to the study of Hessian equations.
 However, it is rather hard to prove gradient estimate for Hessian equations on curved Riemannian manifolds.
 The gradient estimate was obtained in \cite{LiYY1990} under  assumptions \eqref{t1-to-0}, 
  $\lim_{|\lambda|\rightarrow +\infty}\sum_{i=1}^n f_i(\lambda)=+\infty$ and that the Riemannian manifold admits nonnegative sectional curvature, and later extended by Urbas \cite{Urbas2002} 
 with replacing such restrictions by \eqref{key2-yuan} and \eqref{positive-1}. 
One may use  Lemmas \ref{yuan-lemma1-weingarten} and \ref{yuan-lemma2-weingarten}
 to improve their results.
 
 In fact, Theorem \ref{coro1.6} and Proposition \ref{mainthm-2} allow us to derive the 
 gradient estimate for more general equations  
 when  
 \begin{equation}
 	\label{key3-yuan}
 	\sum_{i=1}^n f_i(\lambda)\lambda_i\geq -K_0\sum_{i=1}^n f_i(\lambda) 
	\mbox{ for some } K_0\geq0.
 \end{equation}
  If \eqref{key3-yuan} holds for any $\lambda\in \Gamma^{\underline{\psi},\overline{\psi}}$, then according to
  Proposition \ref{mainthm-2} and \eqref{sumfi-2}, 
  we obtain
 a more general inequality than \eqref{key2-yuan} 
 \begin{equation}
 	\begin{aligned}
 		f_i(\lambda) \geq \theta+\theta \sum_{j=1}^n f_i(\lambda)  \quad\mbox{if } \lambda_i\leq -K_0, \mbox{  } \forall  \lambda\in \Gamma^{\underline{\psi},\overline{\psi}}. 
 	\end{aligned}
 \end{equation}
 As an application, one can follow a strategy, analogous to that used in \cite{Urbas2002}, to 
 derive gradient bound for solutions to  \eqref{hessianequ1-riemann}  under the  $\mathcal{C}$-subsolution assumption 
 \begin{equation}
 	\label{existenceofsubsolution2}
 	\begin{aligned}
 		\lim_{t\rightarrow +\infty}f(\lambda(\nabla^2\underline{u}+A)+t\mathrm{e}_i)>\psi 
 		\mbox{ in } \bar M, \quad \forall 1\leq i\leq n   
 	\end{aligned}
 \end{equation}
 where 
 $\mathrm{e}_i$ is the $i$-$\mathrm{th}$ standard basis vector of $\mathbb{R}^n$.
 
 \begin{proposition}
 	\label{thm1-gradient}
 	In addition to \eqref{concave},  \eqref{elliptic-weak} and \eqref{key3-yuan}, 
 	we assume there is a $C^2$-smooth $\mathcal{C}$-subsolution $\underline{u}$. 
 	Let   $u\in C^3(M)\cap C^1(\bar M)$ be a  solution to \eqref{hessianequ1-riemann} with $\lambda(\nabla^2 u+A)\in \Gamma$, 
 	then 
 	\begin{equation}
 		\begin{aligned}
 			\sup_{M}|\nabla u|\leq C(1+\sup_{\partial M}|\nabla u|), \nonumber
 		\end{aligned}
 	\end{equation}
 	where $C$ depends on $|\psi|_{C^1(\bar M)}$, $|\underline{u}|_{C^2(\bar M)}$ and other known data.  
 \end{proposition}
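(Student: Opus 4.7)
The plan is to adapt Urbas's strategy \cite{Urbas2002} to the present more general setting, using the refined partial uniform ellipticity from Proposition \ref{mainthm-2} as the only structural input from the operator. Combining Proposition \ref{mainthm-2} with \eqref{sumfi-2}, the assumption \eqref{key3-yuan} yields, on the relevant range,
\begin{equation*}
f_i(\lambda) \geq \theta\Bigl(1+\sum_{j=1}^n f_j(\lambda)\Bigr) \quad \mbox{whenever } \lambda_i\leq -K_0,\ \lambda\in \Gamma^{\underline{\psi},\overline{\psi}},
\end{equation*}
with $\theta>0$ uniform. The $\mathcal{C}$-subsolution hypothesis \eqref{existenceofsubsolution2} will enter through Guan's dichotomy from \cite{Guan12a}: there exist uniform $\delta,R>0$, depending only on $\underline{u}$ and $\psi$, such that at any point with $|\lambda(\nabla^2 u+A)|\geq R$,
\begin{equation*}
\sum_{i=1}^n f_i(\lambda)(\underline{\lambda}_i-\lambda_i) \geq \delta\Bigl(1+\sum_{i=1}^n f_i(\lambda)\Bigr),
\end{equation*}
where $\underline{\lambda}=\lambda(\nabla^2\underline{u}+A)$.

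I would consider the auxiliary quantity
\begin{equation*}
W=\log|\nabla u|^2+\varphi(u)+a(\underline{u}-u),
\end{equation*}
for a suitable convex function $\varphi$ and constant $a>0$ to be tuned. If the maximum of $W$ on $\bar M$ is attained on $\partial M$, the conclusion follows from the boundary hypothesis on $\sup_{\partial M}|\nabla u|$. Otherwise, at an interior maximum $x_0\in M$, choose normal coordinates so that $g_{ij}(x_0)=\delta_{ij}$ and $u_{ij}(x_0)+A_{ij}(x_0)$ is diagonal with $\lambda_1\leq\cdots\leq\lambda_n$; then $F^{ij}=\partial F/\partial u_{ij}$ is also diagonal with entries $f_i=f_i(\lambda)$. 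The criticality $W_i(x_0)=0$ gives
\begin{equation*}
\frac{(|\nabla u|^2)_i}{|\nabla u|^2}=-\varphi'(u)u_i-a(\underline{u}-u)_i,
\end{equation*}
and the maximum principle reads $F^{ij}W_{ij}(x_0)\leq 0$. Using the Bochner-type identity
\begin{equation*}
F^{ii}(|\nabla u|^2)_{ii}=2\sum_{i,k}f_i u_{ik}^2+2u_k F^{ii}u_{iik}+(\mbox{curvature terms bounded by } C|\nabla u|^2\sum f_i),
\end{equation*}
together with the differentiated equation $F^{ii}u_{iik}=\psi_k-F^{ii}A_{ii,k}$ and the concavity inequality $F^{ii}(\underline{u}_{ii}-u_{ii})\geq F^{ii}(\underline{\lambda}_i-\lambda_i)$, I would extract a single pointwise inequality at $x_0$ involving $\sum_i f_i u_i^2/|\nabla u|^2$, $|\nabla u|^2\sum f_i$ and the subsolution term $\sum_i f_i(\underline{\lambda}_i-\lambda_i)$.

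The main obstacle, characteristic of curved Riemannian backgrounds, is controlling the curvature-generated bad term of the order $C|\nabla u|^2\sum f_i$, together with the $F^{ii}|u_i|^2$ term produced from the critical point relation and $\varphi''$; a priori one cannot bound $F^{ii}$ from below in terms of $\sum_j f_j$ on directions where $\lambda_i$ is very negative. This is exactly where the refined inequality above intervenes: on indices with $\lambda_i\leq -K_0$, the bound $f_i\geq\theta(1+\sum_j f_j)$ upgrades $\sum_{i:\lambda_i\leq -K_0}f_i u_i^2$ to the scale of $(1+\sum f_j)|\nabla u|^2$ once $|\nabla u|$ is large, so these bad terms can be absorbed. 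For indices where $\lambda_i>-K_0$, the eigenvalues are bounded from below, so $\lambda_i$ is either bounded (trivial case after applying the subsolution dichotomy, which gives full uniform ellipticity on a compact set of $\lambda$) or large and positive, in which case Guan's dichotomy contributes the extra positive term $\delta(1+\sum f_i)$ that dominates the remaining error. Choosing $a$ large and $\varphi$ with the standard convexity profile as in \cite{Urbas2002}, the resulting inequality forces $|\nabla u|(x_0)\leq C$, which yields the claimed global gradient bound.
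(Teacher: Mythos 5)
The paper itself gives no proof of this proposition: it records the refined inequality (derived from Proposition~\ref{mainthm-2} and \eqref{sumfi-2}) and then simply points the reader to the strategy of \cite{Urbas2002}, without supplying the details. Your sketch goes further than the paper does, correctly identifies the refined inequality $f_i\geq\theta(1+\sum_j f_j)$ for $\lambda_i\leq -K_0$ on $\Gamma^{\underline{\psi},\overline{\psi}}$ as the structural input, and sets up a reasonable test quantity $W=\log|\nabla u|^2+\varphi(u)+a(\underline{u}-u)$, so the broad approach is aligned with what the paper intends.

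That said, there is a concrete soft spot in the closing step. You write that when the eigenvalues are bounded below by $-K_0$ (and $|\lambda|$ is large), ``Guan's dichotomy contributes the extra positive term $\delta(1+\sum f_i)$ that dominates the remaining error.'' In the gradient estimate this alone does not produce a bound on $|\nabla u|$: the subsolution term $a\sum f_i(\underline{\lambda}_i-\lambda_i)\geq a\delta(1+\sum f_i)$ is bounded by a constant times $\sum f_i$ independent of $|\nabla u|$, while the decisive step must yield a positive term growing like $|\nabla u|^2\sum f_j$ to force $|\nabla u|\leq C$. In Urbas's (and Li's) argument that term is $\varphi''F^{ii}u_i^2$, and its usefulness rests on the refined inequality applied in exactly those directions where $u_i$ carries the gradient -- which requires relating the eigenbasis of $\nabla^2 u+A$ (where the inequality $f_i\geq\theta(1+\sum_j f_j)$ for $\lambda_i\leq-K_0$ lives) to the critical-point relations of $W$, which typically force the diagonal Hessian entry in the gradient direction to be very negative when $|\nabla u|$ is large, hence $\lambda_1\leq-K_0$. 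Your sketch does not address this basis compatibility, and the role you assign to the subsolution dichotomy (absorbing ``the remaining error'') is the second-order estimate mechanism, not the gradient estimate one. Including the term $a(\underline{u}-u)$ in $W$ with a large $a$ can also interfere with the sign of the relevant second derivative at the critical point, so the choice of $\varphi$ and $a$ needs to be reconciled. None of this is fatal to the overall plan, but as written these steps would not survive a direct computation.
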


 With gradient estimate at hand, as in \cite{Guan12a,yuan2021cvpde}, we can prove the following:
 
 \begin{theorem}
 	
 	Let $(M^n,g)$ be a compact Riemannian manifold with smooth smooth boundary.
 	Let  $\varphi\in C^\infty(\partial M)$ and $\psi\in C^\infty(\bar M)$ be a function satisfying $\inf_M\psi>\sup_{\partial\Gamma}f$. 
 	Suppose that there is an admissible function 
 	$\underline{u}\in C^{3,1}(\bar M)$ satisfying $$f(\lambda(\nabla^2 \underline{u}+A))\geq \psi, \quad  \underline{u}|_{\partial M }=\varphi.$$ In addition to \eqref{elliptic}, \eqref{concave}, we assume  \eqref{key3-yuan} holds for $$\inf_M\psi\leq f(\lambda)\leq \sup_M f(\lambda(\nabla^2 \underline{u}+A)).$$ 
 	Then there exists a unique smooth admissible function $u$ solving \eqref{hessianequ1-riemann} with $u|_{\partial  M}=\varphi$.
 \end{theorem}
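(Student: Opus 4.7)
The plan is to solve the Dirichlet problem by the continuity method, reducing everything to a closed set of \emph{a priori} estimates. I would interpolate between the subsolution and the solution by setting $F(\lambda(\nabla^2 u^t + A)) = t\psi + (1-t)F(\lambda(\nabla^2 \underline{u} + A))$ for $t \in [0,1]$ with boundary data $u^t|_{\partial M} = \varphi$. The openness of $\{t : u^t\text{ exists}\}$ follows from the implicit function theorem in $C^{2,\alpha}$: the linearized operator $\mathcal{L}_{u^t} = F^{ij}\nabla_{ij}$ has strictly positive eigenvalues by \eqref{elliptic} (since $u^t$ is admissible and $\sum f_i > 0$ is ensured by Lemma \ref{k-buchong2} via $\inf_M \psi > \sup_{\partial\Gamma} f$), and the Dirichlet problem for $\mathcal{L}_{u^t} v = h$ with $v|_{\partial M} = 0$ has a unique solution in $C^{2,\alpha}$. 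The standard hypothesis $f(\lambda(\nabla^2\underline{u}+A)) \geq \psi$ together with the strict inequality $\inf_M \psi > \sup_{\partial\Gamma} f$ yields the $\mathcal{C}$-subsolution condition \eqref{existenceofsubsolution2} for type 2 cones, or may be verified directly along the path.

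Closedness reduces to a uniform $C^{2,\alpha}$ bound independent of $t$. First, comparing $u^t$ with $\underline{u}$ and with a supersolution (e.g., the harmonic extension of $\varphi$, or $\sup_{\partial M}\varphi + \text{const}$) gives the $C^0$ estimate. The boundary gradient estimate follows from the classical barrier construction, using $\underline{u}$ from below and a tangential-normal barrier from above. The interior gradient estimate is exactly Proposition \ref{thm1-gradient}, whose hypotheses \eqref{concave}, \eqref{elliptic-weak}, \eqref{key3-yuan} are assumed here (noting that \eqref{elliptic} implies \eqref{elliptic-weak}). The second-order boundary estimate is obtained following Guan \cite[Section 3]{Guan12a} and \cite{yuan2021cvpde}: the tangential-tangential and tangential-normal components are controlled by differentiating the boundary data and using the subsolution, while the double-normal second derivative is bounded by combining \eqref{key3-yuan} with Theorem \ref{coro1.6} and Proposition \ref{mainthm-2}, which supply the crucial lower bound $f_i \geq \theta(1 + \sum_j f_j)$ whenever $\lambda_i \leq -K_0$ — this is precisely the ingredient that rules out degeneration of the boundary estimate. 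The interior second-order estimate is given by Guan's global $C^2$ estimate for \eqref{hessianequ1-riemann} on Riemannian manifolds, again invoking partial uniform ellipticity on the level set.

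Once $|u|_{C^2(\bar M)}$ is bounded uniformly in $t$, the equation becomes uniformly elliptic on the range of $\lambda(\nabla^2 u^t + A)$, so Evans-Krylov theory provides an interior $C^{2,\alpha}$ estimate, and the boundary $C^{2,\alpha}$ estimate of Krylov gives the global $C^{2,\alpha}$ bound. Schauder theory and a bootstrap then yield $C^{\infty}$ bounds, closing the continuity method and producing a smooth admissible solution. Uniqueness is immediate: if $u_1, u_2$ are two admissible solutions agreeing on $\partial M$, then at an interior maximum of $u_1 - u_2$ concavity of $F$ gives $\mathcal{L}_{u_2}(u_1 - u_2) \geq 0$ (upon writing $F(\nabla^2 u_1) - F(\nabla^2 u_2)$ as an integral along the segment), so the strong maximum principle forces $u_1 \equiv u_2$.

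The main obstacle I expect is the double-normal second-order boundary estimate. On curved Riemannian manifolds, the lack of translation invariance forces one to control a quantity of the form $\mathfrak{g}_{\nu\nu} = u_{\nu\nu} + A_{\nu\nu}$ directly, and the standard approach breaks down exactly when some eigenvalue of $\nabla^2 u + A$ tends to $-\infty$ along $\partial M$; this is where the level-set partial uniform ellipticity of Theorem \ref{mainthm-1} is essential — it converts \eqref{key3-yuan} into the genuine partial-uniform lower bound \eqref{key2-yuan-2}, allowing one to absorb the bad negative-eigenvalue terms that appear in the test function argument à la Trudinger-Guan.
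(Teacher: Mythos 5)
Your proposal is correct and takes essentially the same approach as the paper, whose proof consists of invoking the gradient estimate of Proposition \ref{thm1-gradient} together with the second-order and boundary estimates from \cite{Guan12a} and \cite{yuan2021cvpde}. Your detailed continuity-method outline fills in the standard scaffolding around those ingredients and correctly pinpoints the role of the level-set partial uniform ellipticity (via Theorem \ref{coro1.6} and Proposition \ref{mainthm-2}) in both the gradient bound and the double-normal boundary estimate.
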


 \begin{remark}
 	Condition \eqref{key3-yuan} for  $K_0=0$ was also used by \cite{Guan12a} to derive boundary estimate for Dirichlet problem of 
 	\eqref{hessianequ1-riemann}, and later   by \cite{GSS14} (for $K_0\geq0$) to 
 	study first initial boundary problems.  
 	Our results 
 	indicate that the technique assumptions $\mathrm{\bf (i)}$-$\mathrm{\bf (iii)}$ imposed in \cite[Theorem 1.10]{Guan12a}  as well as  assumptions $\mathrm{\bf (i)}$-$\mathrm{\bf (iv)}$ in \cite[Theorem 1.9]{GSS14} can be removed.
 \end{remark}

 

  \subsection*{Acknowledgements}
 The author was supported by the National Natural Science  Foundation of China under grant 11801587.
 
 
 \begin{appendix}
 	
 	\section{Some standard lemmas}
 In this appendix we summarize some standard lemmas.
 	
 	\begin{lemma}
 		\label{k-buchong2}
 		Let $f$ satisfy \eqref{concave} and \eqref{elliptic-weak}, then  \eqref{sumfi-02} holds.
 	\end{lemma}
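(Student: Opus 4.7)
The plan is to argue by contradiction, exploiting the interaction between the weak ellipticity hypothesis \eqref{elliptic-weak} and the concavity inequality \eqref{concave-1}.

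Suppose to the contrary that there exists $\lambda_0 \in \Gamma$ with $f(\lambda_0) < \sup_\Gamma f$ yet $\sum_{i=1}^n f_i(\lambda_0) \leq 0$. Since \eqref{elliptic-weak} guarantees $f_i(\lambda_0) \geq 0$ for every $i$, the only way the sum can be non-positive is if $f_i(\lambda_0)=0$ for all $1 \leq i \leq n$.

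Now I would invoke concavity in the form \eqref{concave-1}: for any $\mu \in \Gamma$,
\[
\sum_{i=1}^n f_i(\lambda_0)(\mu_i - \lambda_{0,i}) \geq f(\mu) - f(\lambda_0).
\]
Since every $f_i(\lambda_0)$ vanishes, the left-hand side is $0$, and we obtain $f(\mu) \leq f(\lambda_0)$ for every $\mu \in \Gamma$. This says $f(\lambda_0) = \sup_\Gamma f$, directly contradicting the assumption $f(\lambda_0) < \sup_\Gamma f$. Hence $\sum_{i=1}^n f_i(\lambda) > 0$ whenever $f(\lambda) < \sup_\Gamma f$, which is precisely \eqref{sumfi-02}.

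There is no real obstacle here: the only subtlety is recognizing that \eqref{elliptic-weak} turns the inequality $\sum f_i(\lambda_0) \leq 0$ into the much stronger equality $f_i(\lambda_0)=0$ for every $i$, which is exactly what is needed to kill the left-hand side of the supporting hyperplane inequality. The argument uses nothing about $\Gamma$ beyond it being the convex domain on which $f$ is defined, and no information about the location of $\lambda_0$ relative to $\partial\Gamma$.
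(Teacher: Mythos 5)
Your argument is correct and complete. The paper itself does not prove Lemma \ref{k-buchong2}; it merely remarks that the lemma ``has been used in \cite{GQY2018}'' and leaves the verification to the reader. Your proof supplies exactly the standard argument one would expect: under \eqref{elliptic-weak} each $f_i(\lambda_0)\ge 0$, so $\sum_i f_i(\lambda_0)\le 0$ forces $f_i(\lambda_0)=0$ for every $i$; feeding this into the supporting-hyperplane inequality \eqref{concave-1} collapses the left-hand side to zero and yields $f(\mu)\le f(\lambda_0)$ for all $\mu\in\Gamma$, i.e.\ $f(\lambda_0)=\sup_\Gamma f$, contradicting $f(\lambda_0)<\sup_\Gamma f$. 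The logic is airtight, and you are also right that nothing about the cone structure of $\Gamma$ is needed beyond it being the (convex, open) domain on which the concavity inequality is valid.
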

The above lemma has been used in \cite{GQY2018}. 
 	Below we present another one.
 	
 	
 	

 	\begin{lemma}
 		\label{k-buchong1}
 		Suppose $f$ satisfies \eqref{concave} and \eqref{elliptic-weak}.
 		Then for any $\sigma$
 		with $\sigma<\sup_\Gamma f$ and $\partial\Gamma^\sigma\neq \emptyset$,
 		there exists $c_\sigma \vec{\bf 1}\in \partial\Gamma^\sigma.$ 
 	\end{lemma}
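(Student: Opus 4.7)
The plan is to restrict $f$ to the diagonal and apply the intermediate value theorem to the resulting one-variable function. Define $g(t):=f(t\vec{\bf 1})$ on the open interval $D:=\{t\in\mathbb{R}:t\vec{\bf 1}\in\Gamma\}$; since $\Gamma$ is an open convex cone containing $\Gamma_n$, we have $(0,\infty)\subseteq D$. The restriction $g$ is smooth, concave (being the restriction of the concave function $f$ to a line), and satisfies $g'(t)=\sum_i f_i(t\vec{\bf 1})\geq 0$ by \eqref{elliptic-weak}, so $g$ is non-decreasing on $D$.

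First I would establish that $\sup_{t\in D} g(t)=\sup_\Gamma f$. For any $\lambda\in\Gamma$, the symmetry of $f$ together with concavity (Jensen's inequality applied to the orbit of $\lambda$ under coordinate permutations) yields
\[
f(\lambda)\leq f(\bar\lambda\vec{\bf 1})=g(\bar\lambda),\qquad \bar\lambda:=\tfrac{1}{n}\textstyle\sum_i\lambda_i,
\]
and symmetry plus convexity of $\Gamma$ give $\bar\lambda\vec{\bf 1}\in\Gamma$, so $\bar\lambda\in D$. Hence $\sup_\Gamma f\leq\sup_D g$, and the reverse inclusion is trivial. The hypothesis $\sigma<\sup_\Gamma f$ and continuity of $g$ then provide some $t_*\in D$ with $g(t_*)>\sigma$.

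Next I would produce $t_0\in D$ with $g(t_0)\leq\sigma$. Pick $\mu\in\partial\Gamma^\sigma$; Lemma \ref{k-buchong2} forces $S_\mu:=\sum_i f_i(\mu)>0$. The concavity of $f$ gives the supporting-hyperplane estimate
\[
g(t)=f(t\vec{\bf 1})\leq \sigma+(t-t_\mu)S_\mu,\qquad t\in D,\qquad t_\mu:=\frac{\sum_i f_i(\mu)\mu_i}{S_\mu},
\]
where, geometrically, $t_\mu\vec{\bf 1}$ is the intersection of the tangent hyperplane of $\partial\Gamma^\sigma$ at $\mu$ with the diagonal. Plugging $t=t_\mu$ yields $g(t_\mu)\leq\sigma$ as soon as $t_\mu\in D$, and since $g$ is continuous with $g(t_\mu)\leq\sigma<g(t_*)$, the intermediate value theorem delivers $c_\sigma\in[t_\mu,t_*]\subset D$ with $g(c_\sigma)=\sigma$, so $c_\sigma\vec{\bf 1}\in\partial\Gamma^\sigma$.

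The main obstacle is confirming that the candidate $t_\mu$ actually lies in $D$. Rearrangement---using that $f_1(\mu)\geq\cdots\geq f_n(\mu)$ whenever $\mu_1\leq\cdots\leq\mu_n$, which follows from symmetry plus concavity---gives $\mu_1\leq t_\mu\leq\bar\mu$, with $\bar\mu\in D$. This settles the case $D=\mathbb{R}$, as well as the case $\Gamma=\Gamma_n$ (where $\mu_1>0$ forces $t_\mu>0$). In the remaining general case with $D=(0,\infty)$ one has to work slightly harder: either replace $\mu$ by a suitably symmetrized point of $\partial\Gamma^\sigma$ whose $t_\mu$ is forced into $D$, or argue directly with the closed convex superlevel set $\{f\geq\sigma\}$ and show that the diagonal ray $\{t\vec{\bf 1}:t\in D\}$ cannot lie entirely in its interior, exploiting that $\mu$ is a regular point of $\partial\Gamma^\sigma$ together with the direction of steepest ascent $-\nabla f(\mu)$.
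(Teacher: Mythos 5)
Your strategy—restrict $f$ to the diagonal, get a concave nondecreasing $g(t)=f(t\vec{\bf 1})$ on $D=(0,\infty)$, and close with the intermediate value theorem—is a genuinely different route from the paper's, which instead takes $\lambda^0\in\partial\Gamma^\sigma$ closest to the origin and shows via symmetry and the Implicit Function Theorem that $\lambda^0$ must lie on the diagonal. The Jensen step, the identity $\sup_D g=\sup_\Gamma f$, and the supporting-hyperplane bound $g(t)\le\sigma+(t-t_\mu)S_\mu$ are all correct, and the $\Gamma=\Gamma_n$ case is genuinely complete because there $\mu_1>0$ forces $t_\mu>0$.

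However, there is a real gap in the general case, and it is not merely a matter of "working slightly harder." First, the case $D=\mathbb{R}$ you list as settled never arises: since $\Gamma\subsetneq\mathbb{R}^n$ is an open convex cone, $0\notin\Gamma$, and convexity of $\Gamma$ together with $\vec{\bf 1}\in\Gamma$ rules out $-\vec{\bf 1}\in\Gamma$, so $D=(0,\infty)$ always. Second, the bound $\mu_1\le t_\mu\le\bar\mu$ does not help because for $\Gamma\ne\Gamma_n$ one can have $\mu_1<0$, and then nothing in your argument rules out $t_\mu\le 0$; indeed the supporting-hyperplane inequality is an \emph{upper} bound on $g$, so if $t_\mu<0$ it only says $g(0^+)\le\sigma-S_\mu t_\mu$, which exceeds $\sigma$ and tells you nothing. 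Thus for an arbitrary $\mu\in\partial\Gamma^\sigma$ your chain of inequalities does not produce any $t\in D$ with $g(t)\le\sigma$, and the conclusion $g(0^+)\le\sigma$—the one nontrivial fact you need—remains unproved in exactly the general case. (Also, the direction of steepest ascent is $+\nabla f(\mu)$, not $-\nabla f(\mu)$.)

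The gap is closable, but the repair effectively reproduces the paper's argument: take $\mu=\lambda^0$, the point of $\partial\Gamma^\sigma$ nearest the origin; the first-order (Lagrange) condition gives $\lambda^0=\alpha\nabla f(\lambda^0)$ with $\alpha>0$ (since $\Gamma\subseteq\Gamma_1$ forces $\sum\lambda^0_i>0$ while $\nabla f\ge0$), hence $\lambda^0\in\bar\Gamma_n$ and $t_{\lambda^0}=\alpha\sum f_i^2/\sum f_i>0$—indeed the ordering $\lambda^0_1\le\cdots\le\lambda^0_n$ versus $f_1\ge\cdots\ge f_n$ then forces $\lambda^0$ to lie on the diagonal outright. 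Until you supply such a choice of $\mu$ (or some other argument that the ray $\{t\vec{\bf 1}:t>0\}$ cannot sit entirely inside $\{f>\sigma\}$), the proof is incomplete precisely where the lemma has content.
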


 	\begin{proof}
 		
 		For $\sigma<\sup_\Gamma f$, the level set $\partial\Gamma^\sigma$ (if $\partial\Gamma^\sigma\neq \emptyset$) is  a 
 		convex noncompact 
 		hypersurface contained in $\Gamma$. Moreover, $\partial\Gamma^\sigma$ is symmetric with respect to the diagonal.
 		
 		Let $\lambda^0\in \partial\Gamma^\sigma$ be the closest point to the origin. (Such a point exists, since $\partial\Gamma^\sigma$ is a closed set). The idea is to prove $\lambda^0$ is 
 		the one what we look for.
		
 		Assume $\lambda^0$ is not in the diagonal. Then by the Implicit Function Theorem, and the convexity and symmetry of $\partial\Gamma^\sigma$, one can choose $\lambda\in \partial\Gamma^\sigma$
 		which has strictly less distance than that of $\lambda^0$. It is a contradiction.
 		
 	\end{proof}

 	\section{Criterion for 
 	$f$ satisfying \eqref{addistruc}}
 	\label{appendix1}

 	We summarize characterizations of $f$ when it satisfies \eqref{concave} and \eqref{addistruc}. 
 	The following lemma was first proposed by 
 	\cite{yuan2021-2}\renewcommand{\thefootnote}{\fnsymbol{footnote}}\footnote{The paper   is extracted from
 		[arXiv:2203.03439] and the first parts of  
 		[arXiv:2001.09238; arXiv:2106.14837].}
 and further reformulated in 
  	\cite{yuan-PUE-conformal}.
 	\begin{lemma}[\cite{yuan2021-2,yuan-PUE-conformal}]
 		\label{lemma3.4}
 		For $f$ satisfying \eqref{concave}, the following statements are equivalent. 
 		\begin{itemize}
 			\item $f$ satisfies \eqref{addistruc}.
 			
 			\item  $\sum_{i=1}^n f_i(\lambda)\mu_i>0, \mbox{ } \forall \lambda, \mbox{  } \mu\in \Gamma. $
 			\item $f(\lambda+\mu)>f(\lambda), \mbox{ }\forall \lambda, \mbox{  } \mu\in \Gamma.$
 		\end{itemize}
 	\end{lemma}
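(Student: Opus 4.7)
The plan is to close the cycle (iii) $\Leftrightarrow$ (ii) and (iii) $\Rightarrow$ (i) $\Rightarrow$ (ii), with the first steps following quickly from concavity and the openness of $\Gamma$, while (i) $\Rightarrow$ (ii) requires more care.

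For (ii) $\Leftrightarrow$ (iii), the two concavity inequalities
\[ f(\lambda+\mu)-f(\lambda) \leq \sum_{i=1}^n f_i(\lambda)\mu_i, \qquad f(\lambda+\mu)-f(\lambda) \geq \sum_{i=1}^n f_i(\lambda+\mu)\mu_i \]
handle both directions at once: the first shows that (iii) forces $\sum_{i=1}^n f_i(\lambda)\mu_i > 0$, and the second combined with (ii) applied at $\lambda+\mu \in \Gamma$ in the direction $\mu \in \Gamma$ gives $f(\lambda+\mu) > f(\lambda)$.

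For (iii) $\Rightarrow$ (i), openness of $\Gamma$ together with $\lambda \in \Gamma$ yields $\lambda - \mu/t \in \Gamma$ for $t$ large, equivalently $t\lambda - \mu \in \Gamma$ for $t \geq T_0$ (since $\Gamma$ is a cone). Hence (iii) gives $f(t\lambda) = f(\mu+(t\lambda-\mu)) > f(\mu)$ for such $t$. Another use of (iii) on the decomposition $t\lambda = T_0\lambda + (t-T_0)\lambda$ for $t > T_0$ yields $f(t\lambda) > f(T_0\lambda)$, so $\lim_{t\to\infty} f(t\lambda) \geq f(T_0\lambda) > f(\mu)$.

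The principal difficulty is (i) $\Rightarrow$ (ii), which I would attack by contradiction. Suppose $\sum_{i=1}^n f_i(\lambda_0)\mu_{0,i} \leq 0$ for some $\lambda_0,\mu_0 \in \Gamma$. Concavity in $t$ of $g(t) := f(\lambda_0+t\mu_0)$ forces $g$ nonincreasing on $[0,\infty)$, so $f(\lambda_0+t\mu_0) \leq f(\lambda_0)$ for all $t \geq 0$. Now fix $t$ large and consider the ray through the origin $h(s) := f(s(\lambda_0+t\mu_0))$; by (i), $h(s) \to \sup_\Gamma f > f(\lambda_0) \geq h(1)$, so the concave function $h$ must be nondecreasing on $[1,\infty)$ (otherwise concavity would drive $h \to -\infty$), and in particular $h'(1) \geq 0$, i.e.,
\[ \sum_{i=1}^n f_i(\lambda_0+t\mu_0)\lambda_{0,i} + t\sum_{i=1}^n f_i(\lambda_0+t\mu_0)\mu_{0,i} \geq 0. \]
Concavity of $g$ also yields $\sum_{i=1}^n f_i(\lambda_0+t\mu_0)\mu_{0,i} = g'(t) \leq g'(0) \leq 0$, so the first sum is $\geq 0$. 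A final application of concavity between $t\mu_0$ and $\lambda_0+t\mu_0$ then gives
\[ f(t\mu_0) \leq f(\lambda_0+t\mu_0) - \sum_{i=1}^n f_i(\lambda_0+t\mu_0)\lambda_{0,i} \leq f(\lambda_0), \]
contradicting $\lim_{t\to\infty} f(t\mu_0) > f(\lambda_0)$ supplied by (i). The subtle point is transporting the ``scaling'' information in (i) — which lives on rays through the origin — onto the affine ray $\lambda_0+t\mu_0$ via an intermediate rescaling, and only then returning to a genuine origin-ray $t\mu_0$ where (i) applies directly.
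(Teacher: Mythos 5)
Your proof is correct, and I note at the outset that the paper itself does not supply a proof of Lemma~\ref{lemma3.4}: the result is cited from \cite{yuan2021-2,yuan-PUE-conformal}, so there is no in-paper argument to compare against. Your (ii) $\Leftrightarrow$ (iii) equivalence via the two one-sided forms of \eqref{concave-1}, and your (iii) $\Rightarrow$ (i) via openness of $\Gamma$ plus the cone property, are standard and sound. For (i) $\Rightarrow$ (ii), passing to the affine ray $g(t)=f(\lambda_0+t\mu_0)$ and then rescaling to the origin-ray $h(s)=f(s(\lambda_0+t\mu_0))$ is the right idea, and the chain $h'(1)\ge 0$, $g'(t)\le 0$, hence $\sum_i f_i(\lambda_0+t\mu_0)\lambda_{0,i}\ge 0$, hence $f(t\mu_0)\le f(\lambda_0)$, does produce the contradiction you want.

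Two small remarks. First, in (iii) $\Rightarrow$ (i) you write $\lim_{t\to\infty}f(t\lambda)\ge f(T_0\lambda)$ without justifying that the limit exists; you should observe that the lower bound $f(t\lambda)>f(T_0\lambda)$ forces the concave function $t\mapsto f(t\lambda)$ to be nondecreasing (a concave function of one variable which is bounded below on a half-line cannot have negative derivative anywhere), so the limit exists in $(-\infty,+\infty]$. Second, a shorter route to (i) $\Rightarrow$ (ii) exists that avoids your intermediate $h$-ray: applying \eqref{concave-1} along $g(t)=f(t\lambda)$, the bound $\lim_{t\to\infty}g(t)>g(1)$ first forces $g$ nondecreasing, and $g'(1)=\sum_i f_i(\lambda)\lambda_i=0$ would make $g$ constant on $[1,\infty)$ and violate the strict inequality in \eqref{addistruc}, so $\sum_i f_i(\lambda)\lambda_i>0$; then for any $\mu\in\Gamma$ pick $T$ with $f(T\mu)>f(\lambda)$ (possible by (i)) and \eqref{concave-1} gives $T\sum_i f_i(\lambda)\mu_i>\sum_i f_i(\lambda)\lambda_i>0$. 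Both arguments are valid; yours is a bit longer but makes the transport of scaling information from origin-rays to affine rays explicit, which is the genuinely delicate point.
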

 	\begin{corollary}[\cite{yuan-PUE-conformal}]
 		\label{coro3.2}
 		Assume \eqref{concave} and \eqref{addistruc} hold. Then we have   \eqref{elliptic-weak} and 
		$\sum_{i=1}^n f_i(\lambda)>0$.
 		
 		
 	\end{corollary}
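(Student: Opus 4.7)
The plan is to deduce Corollary \ref{coro3.2} directly from Lemma \ref{lemma3.4}, which under the concavity hypothesis \eqref{concave} asserts that \eqref{addistruc} is equivalent to the inequality $\sum_{i=1}^n f_i(\lambda)\mu_i > 0$ for all $\lambda, \mu \in \Gamma$. So the entire content of the corollary reduces to choosing suitable test vectors $\mu \in \Gamma$ and exploiting the fact that the positive cone $\Gamma_n$ is contained in $\Gamma$.

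First I would establish $\sum_{i=1}^n f_i(\lambda) > 0$. Since $\vec{\bf 1} = (1,\ldots,1) \in \Gamma_n \subseteq \Gamma$, plugging $\mu = \vec{\bf 1}$ into the characterization immediately gives $\sum_{i=1}^n f_i(\lambda) = \sum_{i=1}^n f_i(\lambda)\cdot 1 > 0$ for every $\lambda \in \Gamma$.

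Next, to establish \eqref{elliptic-weak}, fix any index $i$ and any $\varepsilon > 0$, and consider the perturbed standard basis vector $\mu_\varepsilon := e_i + \varepsilon\vec{\bf 1}$, whose components are $\varepsilon$ in all positions except the $i$-th, where it equals $1+\varepsilon$. All components are strictly positive, so $\mu_\varepsilon \in \Gamma_n \subseteq \Gamma$. Applying the second characterization from Lemma \ref{lemma3.4} yields $f_i(\lambda) + \varepsilon \sum_{j=1}^n f_j(\lambda) > 0$ for every $\varepsilon > 0$. Letting $\varepsilon \to 0^+$ gives $f_i(\lambda) \geq 0$, and since $i$ was arbitrary, \eqref{elliptic-weak} follows.

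This is the entire plan: there is no genuine obstacle, the argument is a two-line deduction once Lemma \ref{lemma3.4} is invoked, and the only mild subtlety is that the standard basis vectors $e_i$ themselves may lie on $\partial\Gamma_n$ (hence possibly on $\partial\Gamma$) so that the strict inequality from Lemma \ref{lemma3.4} is not directly applicable; the $\varepsilon$-perturbation and subsequent limit circumvent this without losing the sign information needed for the weak ellipticity condition.
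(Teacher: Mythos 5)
Your proof is correct, and it is the natural argument: pass to the equivalent characterization $\sum_i f_i(\lambda)\mu_i>0$ from Lemma~\ref{lemma3.4}, test with $\mu=\vec{\bf 1}$ for the sum, and with $\mu_\varepsilon=e_i+\varepsilon\vec{\bf 1}\in\Gamma_n\subseteq\Gamma$ followed by $\varepsilon\to 0^+$ for the weak ellipticity. The $\varepsilon$-perturbation is exactly the right device since $e_i\in\partial\Gamma_n$ need not lie in the open cone $\Gamma$. The paper states Corollary~\ref{coro3.2} as a citation to \cite{yuan-PUE-conformal} without giving an in-text proof, so there is nothing to compare against beyond observing that this is the intended short deduction from the equivalence in Lemma~\ref{lemma3.4} together with $\Gamma_n\subseteq\Gamma$; no gap, no issues.
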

 	
 	Inspired by the following key observation derived from \eqref{concave-1}
 	\begin{equation}
 		\label{010} 
 		\begin{aligned}
 			\mbox{For any } \lambda, \mbox{  } \mu\in\Gamma, \mbox{  }   \sum_{i=1}^n f_i(\lambda)\mu_i \geq \limsup_{t\rightarrow+\infty} f(t\mu)/t  \nonumber
 	\end{aligned}\end{equation}
 	the author \cite{yuan2021-2} introduced the following two conditions:
 	\begin{equation}
 		\label{addistruc-2}
 		\begin{aligned}
 			\mbox{For any } \lambda\in\Gamma, \mbox{  } 
 			\lim_{t\rightarrow+\infty} f(t\lambda)>-\infty,
 		\end{aligned}
 	\end{equation}
 	\begin{equation}
 		\label{addistruc-5}
 		\begin{aligned}
 			\mbox{For any } \lambda\in\Gamma, \mbox{  } 
 			\limsup_{t\rightarrow+\infty} f(t\lambda)/t\geq0.
 		\end{aligned}
 	\end{equation}
 	Obviously, it leads to
 	\begin{lemma}
 		[\cite{yuan2021-2}]
 		\label{lemma-new-1}
 		Suppose $f$ satisfies \eqref{concave} and \eqref{addistruc-5}.
 		Then \begin{equation}
 			\label{addistruc-04}
 			\begin{aligned}
 				\sum_{i=1}^n f_i(\lambda)\mu_i\geq 0  \mbox{ for any } \lambda, \mbox{ } \mu\in\Gamma.
 			\end{aligned} 
 		\end{equation}
 		In addition, $f_i(\lambda)\geq 0$ in $\Gamma$ for all $1\leq i\leq n$.
 		In particular  it satisfies 
 			\begin{equation}
 			\label{addistruc-3}
 			\begin{aligned}
 				\sum_{i=1}^n f_i(\lambda)\lambda_i\geq0, \quad \forall \lambda\in\Gamma. 
 			\end{aligned} 
 		\end{equation} 
 		
 	\end{lemma}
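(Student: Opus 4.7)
The plan is to extract everything from the concavity inequality \eqref{concave-1} combined with a rescaling trick, much as the remark just above the lemma suggests. Fix $\lambda,\mu\in\Gamma$. Since $\Gamma$ is a cone, $t\mu\in\Gamma$ for every $t>0$, and \eqref{concave-1} applied with $\mu$ replaced by $t\mu$ gives
\[
\sum_{i=1}^n f_i(\lambda)(t\mu_i-\lambda_i)\ \geq\ f(t\mu)-f(\lambda).
\]
Dividing by $t>0$ and taking $\limsup_{t\to+\infty}$ yields the key observation
\[
\sum_{i=1}^n f_i(\lambda)\mu_i\ \geq\ \limsup_{t\to+\infty}\frac{f(t\mu)}{t},
\]
because the term $\frac{1}{t}\sum_i f_i(\lambda)\lambda_i$ and $\frac{f(\lambda)}{t}$ tend to zero as $t\to+\infty$.

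Now I invoke \eqref{addistruc-5}: the right-hand side is $\geq 0$ for every $\mu\in\Gamma$. This is exactly \eqref{addistruc-04}. Specializing $\mu=\lambda\in\Gamma$ immediately produces \eqref{addistruc-3}, namely $\sum_i f_i(\lambda)\lambda_i\geq 0$.

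It remains to deduce \eqref{elliptic-weak}, i.e.\ $f_i(\lambda)\geq 0$ for each $i$. Since $\Gamma\supseteq\Gamma_n$ and $\Gamma_n$ is open, for every $\epsilon>0$ and every standard basis vector $e_i\in\mathbb{R}^n$ the perturbation $\mu^{(\epsilon)}:=e_i+\epsilon\vec{\mathbf 1}$ lies in $\Gamma_n\subset\Gamma$. Applying \eqref{addistruc-04} to $\mu^{(\epsilon)}$ gives
\[
f_i(\lambda)+\epsilon\sum_{j=1}^n f_j(\lambda)\ \geq\ 0.
\]
Applying \eqref{addistruc-04} once more with $\mu=\vec{\mathbf 1}\in\Gamma_n\subset\Gamma$ shows $\sum_j f_j(\lambda)\geq 0$, so the previous inequality is an estimate $f_i(\lambda)\geq -\epsilon C(\lambda)$ valid for every $\epsilon>0$. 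Letting $\epsilon\to 0^+$ yields $f_i(\lambda)\geq 0$, as required.

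The potential sticking point is the last step: one must be sure that $f_i(\lambda)\geq 0$ can be extracted from the inequality $\sum_j f_j(\lambda)\mu_j\geq 0$ on $\Gamma$ alone, given that the standard basis vector $e_i$ itself need not belong to $\Gamma$. The approximation $e_i+\epsilon\vec{\mathbf 1}\in\Gamma_n\subset\Gamma$ circumvents this cleanly, and crucially the inequality is linear in $\mu$ so limits pose no issue. Equivalently, one may phrase this as saying that the linear functional $\mu\mapsto\sum_j f_j(\lambda)\mu_j$ is non-negative on the dense subset $\Gamma_n$ of $\overline{\Gamma_n}$, hence non-negative on the whole closed positive orthant, whose dual is itself; therefore its coefficients $f_j(\lambda)$ are non-negative.
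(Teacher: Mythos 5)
Your proof is correct and follows essentially the same route the paper intends: the paper records the ``key observation'' \eqref{010}, namely $\sum_i f_i(\lambda)\mu_i \geq \limsup_{t\to+\infty} f(t\mu)/t$ derived from \eqref{concave-1} by the same rescaling and limit you perform, and then asserts that the lemma ``obviously'' follows. Your write-up fills in the elided details, including the clean perturbation $e_i+\epsilon\vec{\bf 1}\in\Gamma_n$ to extract $f_i(\lambda)\geq 0$; this matches the intended argument.
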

 	
 	We have criteria for concave symmetric functions.
 	\begin{lemma}
 		\label{lemma-4b}
 		In the presence of \eqref{concave},  the following statements are equivalent.
 		\begin{itemize}
 			
 			\item $f$ satisfies  \eqref{addistruc-2}.
 			\item $f$ satisfies  \eqref{addistruc-5}.
 			\item $f$ satisfies  \eqref{addistruc-04}.
 			\item $f$ satisfies  \eqref{addistruc-3}.
 			
 		\end{itemize}
 	\end{lemma}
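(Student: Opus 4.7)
The plan is to close the cycle
$\eqref{addistruc-2}\Rightarrow\eqref{addistruc-5}\Rightarrow\eqref{addistruc-04}\Rightarrow\eqref{addistruc-3}\Rightarrow\eqref{addistruc-2}$.
Two of these links are already in hand: $\eqref{addistruc-5}\Rightarrow\eqref{addistruc-04}$ is exactly Lemma \ref{lemma-new-1}, and $\eqref{addistruc-04}\Rightarrow\eqref{addistruc-3}$ is just the specialization $\mu=\lambda$ in \eqref{addistruc-04}. The remaining two implications both follow from studying, for a fixed $\lambda\in\Gamma$, the one-dimensional slice $g(t):=f(t\lambda)$ on $(0,+\infty)$, which by \eqref{concave} is a smooth concave function of $t$.

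For $\eqref{addistruc-2}\Rightarrow\eqref{addistruc-5}$, I would use that concavity makes $g'$ nonincreasing, so $L:=\lim_{t\to+\infty}g'(t)$ exists in $[-\infty,+\infty)$. If $L<0$, then $g(t)\to-\infty$, contradicting \eqref{addistruc-2}; hence $L\geq 0$, and consequently $g(t)/t\to L\geq 0$, which gives \eqref{addistruc-5}. The substantive step is $\eqref{addistruc-3}\Rightarrow\eqref{addistruc-2}$: since $t\lambda\in\Gamma$ for every $t>0$, applying \eqref{addistruc-3} at the point $t\lambda$ yields
\[g'(t)=\sum_{i=1}^n f_i(t\lambda)\lambda_i=\frac{1}{t}\sum_{i=1}^n f_i(t\lambda)(t\lambda_i)\geq 0,\]
so $g$ is nondecreasing on $(0,+\infty)$; in particular $\lim_{t\to+\infty}f(t\lambda)\geq f(\lambda)>-\infty$, which is \eqref{addistruc-2}.

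The obstacle here is only organizational: once one recognizes that \eqref{addistruc-3} is precisely the statement that the radial derivative of $f$ is everywhere nonnegative on $\Gamma$, so that the restriction of $f$ to any ray from the origin is automatically nondecreasing, each link of the cycle reduces to a one-line argument and no new estimates or auxiliary constructions beyond Lemma \ref{lemma-new-1} are required.
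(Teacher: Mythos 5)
The paper states Lemma \ref{lemma-4b} without proof, so there is nothing in the text to compare against directly; your job was therefore to supply a proof from scratch, and what you wrote is correct and complete. The cycle $\eqref{addistruc-2}\Rightarrow\eqref{addistruc-5}\Rightarrow\eqref{addistruc-04}\Rightarrow\eqref{addistruc-3}\Rightarrow\eqref{addistruc-2}$ is a sensible decomposition: you quote Lemma \ref{lemma-new-1} for the middle link (the paper's own route would be via the observation labelled \eqref{010}, which follows from \eqref{concave-1} by replacing $\mu$ with $t\mu$ and sending $t\to\infty$, so these are essentially the same), specialization $\mu=\lambda$ handles $\eqref{addistruc-04}\Rightarrow\eqref{addistruc-3}$, and the remaining two implications are correctly reduced to one-variable concavity of the radial slice $g(t)=f(t\lambda)$. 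Your use of the fact that $\Gamma$ is a cone (so $t\lambda\in\Gamma$ for all $t>0$) and of the chain rule $g'(t)=\sum_i f_i(t\lambda)\lambda_i$ is exactly what makes \eqref{addistruc-3} literally equivalent to monotonicity of $g$ along rays. One small simplification in $\eqref{addistruc-2}\Rightarrow\eqref{addistruc-5}$: once you know $L:=\lim_{t\to\infty}g'(t)\geq 0$, monotonicity of $g'$ gives $g'(t)\geq L\geq 0$ for all $t$, so $g(t)\geq g(1)$ and already $\liminf_{t\to\infty}g(t)/t\geq 0$; the Cesàro or L'H\^opital step to identify $\lim g(t)/t=L$ is not needed, though it is also correct.
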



 We can deduce the following lemma when \eqref{sumfi>0} holds.

 	\begin{lemma}
 		\label{lemma-new-2}
 		Suppose  that  \eqref{concave} and \eqref{sumfi>0} hold.
 		Then the following statements are equivalent to each other.
 		\begin{itemize}
 			\item $f$ satisfies \eqref{addistruc}.
 			\item $f$ satisfies \eqref{addistruc-2}.
 			\item $f$ satisfies \eqref{addistruc-5}.
 			\item $f$ satisfies \eqref{addistruc-04}.
 			\item $f$ satisfies \eqref{addistruc-3}.
 		\end{itemize}
 	\end{lemma}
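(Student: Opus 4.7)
The plan is to graft \eqref{addistruc} onto the equivalence chain already established by Lemma \ref{lemma-4b}, which under \eqref{concave} alone yields
\[\eqref{addistruc-2} \Leftrightarrow \eqref{addistruc-5} \Leftrightarrow \eqref{addistruc-04} \Leftrightarrow \eqref{addistruc-3}.\]
So I only need to show, under the extra hypothesis \eqref{sumfi>0}, the two implications \eqref{addistruc} $\Rightarrow$ \eqref{addistruc-2} and \eqref{addistruc-04} $\Rightarrow$ \eqref{addistruc}. The former is immediate from the definition: for any fixed $\mu\in\Gamma$, \eqref{addistruc} gives $\lim_{t\to+\infty}f(t\lambda) > f(\mu) > -\infty$, which is \eqref{addistruc-2}.

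The substantive step is \eqref{addistruc-04} $\Rightarrow$ \eqref{addistruc}. My plan is to invoke Lemma \ref{lemma3.4}, which under \eqref{concave} characterizes \eqref{addistruc} by the \emph{strict} inequality
\[\sum_{i=1}^n f_i(\lambda)\mu_i > 0 \quad \text{for all } \lambda,\mu\in\Gamma.\]
Since \eqref{addistruc-04} only delivers $\geq 0$, the task reduces to upgrading non-strict to strict positivity, and this is precisely where \eqref{sumfi>0} is used. The mechanism is a simple openness-perturbation trick: for any $\mu\in\Gamma$, since $\Gamma$ is open, there exists $\epsilon > 0$ small enough that $\mu - \epsilon\vec{\bf 1}\in\Gamma$. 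Applying \eqref{addistruc-04} with $\mu - \epsilon\vec{\bf 1}$ in place of $\mu$ gives
\[\sum_{i=1}^n f_i(\lambda)(\mu_i - \epsilon) \geq 0,\]
which rearranges to
\[\sum_{i=1}^n f_i(\lambda)\mu_i \geq \epsilon \sum_{i=1}^n f_i(\lambda) > 0\]
thanks to \eqref{sumfi>0}. By Lemma \ref{lemma3.4}, this is \eqref{addistruc}, closing the equivalence.

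There is no serious obstacle: the only genuinely new content beyond the cited lemmas is recognizing that \eqref{sumfi>0} is exactly the ingredient needed to promote the non-strict inequality in \eqref{addistruc-04} to strict positivity, via a one-line perturbation $\mu \mapsto \mu - \epsilon\vec{\bf 1}$ exploiting the openness of $\Gamma$. Every other step is a direct citation of Lemma \ref{lemma-4b} or Lemma \ref{lemma3.4}.
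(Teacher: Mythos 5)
Your proof is correct. The route is close in spirit to the paper's but differs in detail: both proofs hinge on a perturbation by a small multiple of $\vec{\bf 1}$ combined with \eqref{sumfi>0} to upgrade a non-strict inequality to a strict one, but you and the paper target different equivalent formulations of \eqref{addistruc}. The paper reduces (via Lemmas \ref{lemma3.4} and \ref{lemma-4b}) to proving \eqref{addistruc-5}~$\Rightarrow$~\eqref{addistruc} directly, applying \eqref{concave-1} at $t\mu$ and $\lambda$, splitting $t\mu-\lambda=(t\mu-\lambda-\vec{\bf 1})+\vec{\bf 1}$, and observing that for $t$ large enough $t\mu-\lambda-\vec{\bf 1}\in\Gamma$ so that $f(t\mu)\geq f(\lambda)+\sum_i f_i(t\mu)>f(\lambda)$. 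You instead prove \eqref{addistruc-04}~$\Rightarrow$~\eqref{addistruc} by shifting $\mu\mapsto\mu-\epsilon\vec{\bf 1}$ to establish the strict positivity $\sum_i f_i(\lambda)\mu_i>0$ and then invoke the second characterization in Lemma \ref{lemma3.4}. Your version is a bit more modular and avoids any asymptotic ``$t$ large'' reasoning; the paper's version is more self-contained in that it exhibits the inequality $f(t\mu)>f(\lambda)$ explicitly without passing back through the strict-positivity characterization. Both are valid, and both correctly identify \eqref{sumfi>0} as the ingredient that closes the gap between $\geq$ and $>$.
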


 	\begin{proof}

 		From Lemmas \ref{lemma3.4} and \ref{lemma-4b},
 		it requires only to prove 
 		\[\eqref{addistruc-5}  \Rightarrow  \eqref{addistruc}.\]
 		Fix $\lambda$, $\mu\in\Gamma$.
 		Note that $t\mu-\lambda- \vec{\bf 1} \in\Gamma$ for $t> t_{\mu,(\lambda+{\vec{\bf 1}})}>0$. 
		Together with \eqref{concave-1} and  \eqref{sumfi>0}, 
 		we derive
 		\[f(t\mu)\geq f(\lambda) + \sum_{i=1}^n f_i(t\mu)>f(\lambda)\mbox{ for } t>t_{\mu,(\lambda+{\vec{\bf 1}})}.\] 
 		
 	\end{proof}

 	
 	

 	\begin{lemma}
 		[\cite{yuan2021-2}]
 		\label{lemma1-con-addi}
 		If $f$ satisfies \eqref{concave}, \eqref{elliptic} and  \eqref{t1-to-0}, then it obeys \eqref{addistruc}.
 	\end{lemma}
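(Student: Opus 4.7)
The plan is to reduce \eqref{addistruc} to a more tractable equivalent condition via Lemma \ref{lemma-new-2}, and then verify that condition using the diagonal hypothesis \eqref{t1-to-0}.

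First, since \eqref{elliptic} gives $f_i(\lambda)>0$ pointwise on $\Gamma$, summing yields \eqref{sumfi>0}. Together with \eqref{concave}, the hypotheses of Lemma \ref{lemma-new-2} are satisfied, so \eqref{addistruc} is equivalent to the four companion statements listed there. I would aim to prove \eqref{addistruc-3}: $\sum_{i=1}^n f_i(\lambda)\lambda_i\ge 0$ for every $\lambda\in\Gamma$, as it is the statement most directly linked to the pointwise input \eqref{t1-to-0}.

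To verify \eqref{addistruc-3}, I would study the diagonal restriction $g(s):=f(s\vec{\bf 1})$ on $s>0$. By \eqref{concave} and symmetry, $g$ is smooth and concave; by \eqref{elliptic}, $g'(s)=nf_1(s\vec{\bf 1})>0$, so $g$ is strictly increasing. Hypothesis \eqref{t1-to-0} supplies $g(0^+)=f_0>-\infty$, hence $g(s)-f_0=\int_0^s g'(r)\,dr$ is finite; combined with $g'$ non-increasing, the elementary bound $(s/2)g'(s)\le\int_{s/2}^s g'(r)\,dr$ forces $sg'(s)\to 0$ as $s\to 0^+$. Now apply \eqref{concave-1} at base point $s\vec{\bf 1}$ with test direction $\lambda\in\Gamma$:
\[
\frac{g'(s)}{n}\sum_{i=1}^n\lambda_i \;-\; sg'(s)\;\ge\; f(\lambda)-g(s).
\]
A symmetrization argument (averaging $\lambda$ over the symmetric group and using that $\Gamma$ is a proper symmetric convex cone containing $\Gamma_n$) shows $\sum_i\lambda_i>0$ on $\Gamma$, so one controls the left-hand side in the limit $s\to 0^+$ using $sg'(s)\to 0$, $g(s)\to f_0$, and $\lim_{s\to 0^+}g'(s)\in(0,+\infty]$.

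The main obstacle is that \eqref{concave-1} naturally yields only one-sided (upper) estimates, whereas \eqref{addistruc-3} is a lower bound. The cleanest way to bridge this gap is a contradiction argument targeted at the equivalent condition \eqref{addistruc-2}: assume for contradiction that $\lim_{t\to+\infty}f(t\lambda^*)=-\infty$ for some $\lambda^*\in\Gamma$. Applying \eqref{concave-1} at $t\lambda^*$ with test vector $s\vec{\bf 1}$ and letting $s\to 0^+$ gives
\[
f_0\;\le\; f(t\lambda^*)\;-\;t\sum_{i=1}^n f_i(t\lambda^*)\lambda^*_i,
\]
so $t\sum_i f_i(t\lambda^*)\lambda^*_i\to-\infty$. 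Coupling this forced rate along the ray $t\lambda^*$ with the diagonal control $sg'(s)\to 0$ derived from \eqref{t1-to-0}, and using strict ellipticity to relate $f(t\lambda^*+s\vec{\bf 1})$ back to $f(t\lambda^*)$ via the mean-value identity $f(t\lambda^*+s\vec{\bf 1})-f(t\lambda^*)=\int_0^s\!\sum_i f_i(t\lambda^*+r\vec{\bf 1})\,dr$, one produces a contradiction with the finiteness of $f_0$. This yields \eqref{addistruc-3}, and then Lemma \ref{lemma-new-2} delivers \eqref{addistruc} as required.
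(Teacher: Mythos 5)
The paper cites Lemma \ref{lemma1-con-addi} to \cite{yuan2021-2} and does not reproduce a proof here, so I can only assess your argument on its own terms.

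Your opening reductions are correct: \eqref{elliptic} gives \eqref{sumfi>0}, and Lemma \ref{lemma-new-2} then makes \eqref{addistruc} equivalent to \eqref{addistruc-2} and \eqref{addistruc-3}. Your preliminary facts about $g(s)=f(s\vec{\bf 1})$ are also sound, including $s g'(s)\to 0$ as $s\to 0^+$. Letting $s\to 0^+$ in \eqref{concave-1} with base point $t\lambda^*$ and test point $s\vec{\bf 1}$ correctly yields
\[
f_0 \;\le\; f(t\lambda^*) - t\sum_{i=1}^n f_i(t\lambda^*)\lambda^*_i.
\]

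The gap is in the final paragraph. You assert that coupling the forced rate $t\sum_i f_i(t\lambda^*)\lambda^*_i\to-\infty$, the diagonal control $s g'(s)\to 0$, and the mean-value identity for $f(t\lambda^*+s\vec{\bf 1})-f(t\lambda^*)$ "produces a contradiction with the finiteness of $f_0$," but no contradiction is actually exhibited, and in fact none of the three facts you assemble precludes $f(t\lambda^*)\to-\infty$. Every estimate you derive from \eqref{concave-1} with the diagonal as either base or test point is an \emph{upper} bound on $\sum_i f_i(\lambda)\lambda_i$ (or equivalently on $\rho(t):=f(t\lambda^*)$) and is perfectly consistent with $\rho(t)\to-\infty$: the displayed inequality only says the quantity $\rho(t)-t\rho'(t)$ (the $y$-intercept of the tangent to $\rho$) stays $\ge f_0$, which a concave decreasing $\rho$ easily satisfies. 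The mean-value identity $f(t\lambda^*+s\vec{\bf 1})-f(t\lambda^*)=\int_0^s\sum_i f_i(t\lambda^*+r\vec{\bf 1})\,dr>0$ again only says $f(t\lambda^*+s\vec{\bf 1})>f(t\lambda^*)$, i.e.\ another upper bound on the quantity you are trying to bound from below, and it gives no control from beneath because ellipticity says nothing about how fast $f$ decreases when some coordinates are pushed negative. What you need, and what is missing, is a mechanism that converts the hypothesis \eqref{t1-to-0} into a \emph{lower} bound for $f$ along the off-diagonal rays $t\lambda^*$ (or, equivalently, a lower bound on $\sum_i f_i\lambda_i$); the Jensen comparison $f(t\lambda^*)\le g(t\bar\lambda^*)$ and the tangent-line intercept bound $\rho(t)-t\rho'(t)\ge f_0$ both point the wrong way. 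As written, the proof stops one step short of the conclusion, and the step it omits is the crux of the lemma.
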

 	
 		
 		 
 			\begin{remark}
 			Lemma \ref{lemma-new-2} was proved in \cite{yuan2021-2}
 			when $f$ satisfies  \eqref{concave}-\eqref{elliptic}.
 		\end{remark}

 \end{appendix}


 

\end{document}